\renewcommand{\a}{\alpha}
\renewcommand{\b}{\beta}
\newcommand{\e}{\varepsilon}
\renewcommand{\l}{\lambda}
\newcommand{\s}{\sigma}
\renewcommand{\O}{\Omega}
\newcommand{\C}{\mathcal{C}}
\newcommand{\la}{\langle}
\newcommand{\ra}{\rangle}
\renewcommand{\to}{\rightarrow}
\newcommand{\leqs}{\leqslant}
\newcommand{\geqs}{\geqslant}
\newcommand{\vs}{\vspace{2mm}}
\newcommand{\frat}{{\rm Frat}}
\newcommand{\End}{{\rm End}}
\newcommand{\diag}{{\rm diag}}
\newcommand{\soc}{{\rm soc}}
\newcommand{\Der}{{\rm Der}}
\newcommand{\imod}[1]{\allowbreak\mkern4mu({\operator@font mod}\,\,#1)}
\newtheorem{theorem}{Theorem}
\newtheorem*{conj*}{Conjecture}
\newtheorem{thm}{Theorem}[section]
\newtheorem{prop}[thm]{Proposition}
\newtheorem{lem}[thm]{Lemma}
\newtheorem{cor}[thm]{Corollary}
\theoremstyle{definition}
\newtheorem{rem}[thm]{Remark}
\newtheorem{remk}{Remark}
\newtheorem*{def-non}{Definition}
\newtheorem{defn}[thm]{Definition}
\begin{document}

\author{Timothy C. Burness}
\thanks{Garonzi acknowledges the support of the Conselho Nacional de Desenvolvimento Cient\'ifico e Tecnol\'ogico (CNPq) - Grant numbers 302134/2018-2, 422202/2018-5.}
\address{T.C. Burness, School of Mathematics, University of Bristol, Bristol BS8 1UG, UK}
\email{t.burness@bristol.ac.uk}

\author{Martino Garonzi}
\address{M. Garonzi, Departamento de Matem\'{a}tica, Universidade de Bras\'{i}lia, Campus Universit\'{a}rio Darcy Ribeiro, Bras\'{i}lia-DF, 70910-900, Brazil}
\email{mgaronzi@gmail.com}

\author{Andrea Lucchini}
\address{A. Lucchini, Dipartimento di Matematica ``Tullio Levi-Civita”, Universit\`{a} di Padova, Via Trieste 63, 35131 Padova, Italy}
\email{lucchini@math.unipd.it}

\title[Finite groups, minimal bases and the intersection number]{Finite groups, minimal bases and \\ the intersection number}

\begin{abstract}
Let $G$ be a finite group and recall that the Frattini subgroup ${\rm Frat}(G)$ is the intersection of all the maximal subgroups of $G$. In this paper, we investigate the intersection number of $G$, denoted $\alpha(G)$, which is the minimal number of maximal subgroups whose intersection coincides with ${\rm Frat}(G)$. In earlier work, we studied $\alpha(G)$ in the special case where $G$ is simple and here we extend the analysis to almost simple groups. In particular, we prove that $\alpha(G) \leqslant 4$ for every almost simple group $G$, which is best possible. We also establish new results on the intersection number of arbitrary finite groups, obtaining upper bounds that are defined in terms of the chief factors of the group. Finally, for almost simple groups $G$ we present best possible bounds on a related invariant $\beta(G)$, which we call the base number of $G$. In this setting, $\beta(G)$ is the minimal base size of $G$ as we range over all faithful primitive actions of the group and we prove that the bound $\beta(G) \leqslant 4$ is optimal. Along the way, we study bases for the primitive action of the symmetric group $S_{ab}$ on the set of partitions of $[1,ab]$ into $a$ parts of size $b$, determining the exact base size for $a \geqslant b$. This extends earlier work of Benbenishty, Cohen and Niemeyer.
\end{abstract}

\date{\today}

\maketitle

\section{Introduction}\label{s:intro}

Let $G$ be a finite group and let $\mathcal{M}$ be the set of maximal subgroups of $G$. For $H \in \mathcal{M}$, let $H_G = \bigcap_{g \in G}H^g$ denote the core of $H$ and note that we may view $G/H_G$ as a primitive permutation group on the set $\O = G/H$ of cosets of $H$ in $G$. In this setting, a subset $B$ of $\O$ is a \emph{base} for $G/H_G$ if the pointwise stabiliser of $B$ in $G/H_G$ is trivial, and we define the \emph{base size} of $G$, denoted $b(G,H)$, to be the minimal size of a base. In other words, 
\[
b(G,H) = \min\{ |S| \,:\, S \subseteq G,\, \bigcap_{g \in S} H^g = H_G\}.
\]
Determining the base sizes of finite permutation groups (and primitive groups in particular) is a fundamental problem in permutation group theory, with a long history stretching back to the nineteenth century. 

Set $\mathcal{M}^* = \{ H \in \mathcal{M} \,:\, H_G = {\rm Frat}(G)\}$, where $\frat(G) = \bigcap_{H \in \mathcal{M}}H$ is the Frattini subgroup of $G$. In this paper, we are interested in the following invariant 
\[
\b(G) = \begin{cases}
\min\{b(G,H) \,:\, H \in \mathcal{M^*}\} & \text{if $\mathcal{M}^*\neq \emptyset$} \\
\infty&\text{otherwise,}
\end{cases}
\]
which we call the \emph{base number} of $G$. So if we assume $\mathcal{M}^*$ is non-empty, then $\b(G)$ is the smallest number of conjugate maximal subgroups whose intersection coincides with the Frattini subgroup of $G$. By relaxing the conjugacy condition, we obtain the \emph{intersection number} of $G$ (this terminology was introduced in \cite{Archer}):
\[
\a(G) = \min\{|\mathcal{T}| \,:\, \mathcal{T} \subseteq \mathcal{M},\, \bigcap_{H \in \mathcal{T}}H = \frat(G)\}.
\]
Clearly, we have $\a(G) \leqs \b(G)$. 

In an earlier paper \cite{BGL}, we studied these invariants in the setting where $G$ is a finite simple group, obtaining best possible bounds. More precisely, we proved that $\a(G) \leqs 3$ (with equality for infinitely many simple groups) and $\b(G) \leqs 4$ (with equality if and only if $G$ is isomorphic to the unitary group ${\rm U}_{4}(2)$). This extended earlier work of Garonzi and Lucchini \cite{GL}, who determined the exact intersection number of the alternating groups. Results on $\a(G)$ for some families of insoluble groups are presented in \cite{Archer} by Archer et al. For example, \cite[Theorem 3.3]{Archer} gives a formula for the intersection number of every finite nilpotent group, and there are results for dihedral, generalised quaternion and symmetric groups in \cite[Section 4]{Archer}. In particular, \cite[Proposition 4.3]{Archer} gives the bound 
\[
\a(S_n) \leqs \left\lfloor \frac{n+8}{4} \right\rfloor,
\]
which we improve to $\a(S_n) \leqs 3$ in Theorem \ref{t:main1} below (the latter bound is best possible).

\vs

In this paper, we extend some of this earlier work in several directions. Our first main result generalises the bounds on $\a(G)$ and $\b(G)$ in \cite[Theorem 1]{BGL} from simple groups to almost simple groups, providing best possible bounds for both invariants. 

\begin{theorem}\label{t:main1}
Let $G$ be a finite almost simple group with socle $G_0$. 
\begin{itemize}\addtolength{\itemsep}{0.2\baselineskip}
\item[{\rm (i)}]  We have $\a(G) \leqs 4$, with equality if and only if $G \cong {\rm U}_{4}(2).2$.
\item[{\rm (ii)}]  We have $\b(G) \leqs 4$, with equality if and only if $G \cong S_6$ or $G_0 \cong {\rm U}_{4}(2)$.
\end{itemize}
\end{theorem}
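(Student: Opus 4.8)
The plan is to reduce Theorem \ref{t:main1} to a case analysis on the socle $G_0$, treating separately the alternating/symmetric groups, the sporadic groups, and the groups of Lie type, while keeping $\mathrm{U}_4(2)$ and $S_6$ under close watch throughout. The key structural observation is that when $\frat(G)=1$ (which holds for almost all almost simple $G$, since $G_0 \le G$ is self-centralising and any nontrivial normal soluble subgroup of $G$ would be contained in $C_G(G_0)=1$), bounding $\a(G)$ amounts to finding a small family of maximal subgroups with trivial intersection, and bounding $\b(G)$ amounts to the same with the maximal subgroups taken conjugate — equivalently, $\b(G)=b(G,H)$ for some core-free maximal $H$ with minimal base size among all faithful primitive actions. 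So part (ii) is really a statement about minimal base sizes over primitive actions, and part (i) is a weakening of it.

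First I would dispose of the soluble and small cases, and identify the exact list of $G$ for which $\frat(G)\neq 1$; by inspection this forces $G_0$ tiny (the relevant Schur-multiplier/outer-automorphism interactions only produce a nontrivial Frattini subgroup for a handful of groups, e.g. certain extensions related to $\mathrm{PSL}_2$ or sporadic covers appearing as $G$ itself). Next, for $G$ with $\frat(G)=1$, I would invoke the simple-group results of \cite{BGL}: we already know $\a(G_0)\le 3$ and $\b(G_0)\le 4$, with the extremal behaviour pinned to $\mathrm{U}_4(2)$. The task is to control what happens when we pass from $G_0$ to an almost simple overgroup $G$ with $G/G_0$ soluble of small order. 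The core technical input here is base-size data for the primitive actions of $G$: for most maximal subgroups $H$ of $G$ (in particular standard actions of classical groups on subspaces, and the action of $S_n$ on $k$-subsets or on partitions), one has $b(G,H)\le 3$ or even $b(G,H)\le 2$, by the extensive literature on base sizes of almost simple primitive groups (Burness–Liebeck–Shalev and related work), together with the new results on $S_{ab}$ acting on partitions into $a$ parts of size $b$ promised in the abstract. For $G=S_n$ specifically, I would exhibit an explicit triple of maximal subgroups — a natural candidate being two suitable intransitive subgroups $S_k\times S_{n-k}$ (in different conjugates) together with a primitive or imprimitive maximal subgroup — whose intersection is trivial, yielding $\a(S_n)\le 3$; and then argue this cannot be lowered to $2$ for large $n$ by a counting or fixed-point argument, establishing sharpness.

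For the groups of Lie type, I would run the standard "two subgroups are almost enough" strategy: pick a parabolic $P$ and an opposite parabolic $P^-$ (or a maximal-rank torus normaliser), whose intersection is a Levi-type subgroup of much smaller Lie rank, then finish off with one or two further well-chosen maximal subgroups to cut the intersection down to $\frat(G)$. Careful bookkeeping of the contribution of $G/G_0$ (graph, field, and diagonal automorphisms) is needed to ensure the chosen subgroups remain maximal in $G$ and that their intersection does not retain a piece of $\mathrm{Out}(G_0)$; this is where the bound can temporarily rise to $4$ and where $\mathrm{U}_4(2).2$ genuinely attains $\a=4$ — I would verify that case by direct computation (the subgroup lattice of $\mathrm{U}_4(2).2$ is small enough for a computer check, or a hand argument using that every maximal subgroup of $\mathrm{U}_4(2).2$ contains a fixed nontrivial element of a particular conjugacy class in too many cases to allow three subgroups to suffice). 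The analogous but more delicate analysis for $\b$ must additionally respect conjugacy, which is exactly why $\b$ can reach $4$ at $S_6$ (the exceptional outer automorphism of $S_6$ shrinks the pool of conjugate maximal subgroups) and at the almost simple groups with socle $\mathrm{U}_4(2)$.

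The main obstacle I anticipate is the groups of Lie type with small rank and small field — precisely the regime where generic arguments (large Weyl group, abundant maximal subgroups, base size $2$) break down, and where sporadic near-misses to the bound $4$ or to the characterisation of equality can hide. These cases, including $\mathrm{U}_4(2)$ and its automorphic extension, $S_6\cong \mathrm{Sp}_4(2)'$, and a bounded list of other low-rank classical and exceptional groups over $\mathbb{F}_2,\mathbb{F}_3,\mathbb{F}_4$, would have to be handled by explicit computation in \textsc{Magma} or \textsc{GAP}, enumerating maximal subgroups and searching for minimal intersecting families; the bulk of the proof's length, and essentially all of its sharpness claims, will live in that finite check.
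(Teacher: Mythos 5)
Your overall architecture (reduce to $\frat(G)=1$, split by socle, quote \cite{BGL} for the simple case, use the partition action for $S_n$, and finish the small groups including ${\rm U}_4(2)$ and $S_6$ by computer) matches the paper, but the engine you propose for the groups of Lie type does not work, and this is where essentially all of the difficulty lives. The intersection of two opposite maximal parabolics is a \emph{full-rank} Levi subgroup (e.g.\ for ${\rm PSL}_n(q)$ with $P=P_1$ it is roughly a ${\rm GL}_{n-1}(q)$), and there is no general way to cut such a subgroup down to the identity with ``one or two further well-chosen maximal subgroups''; moreover $P$ and $P^-$ need not be conjugate in $G$, so even if this worked it would only bound $\a(G)$, not $\b(G)$, and part (ii) would be untouched. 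Relatedly, your assertion that standard subspace actions of classical groups have base size at most $3$ is backwards: those are precisely the actions excluded from Cameron's conjecture because their base sizes are unbounded (they grow with the rank). The paper's actual strategy is to locate, for each family, a \emph{single conjugacy class} of maximal subgroups $H$ with $b(G,H)\leqs 3$ --- torus normalisers and subfield subgroups for exceptional groups, $\C_3$ field-extension subgroups, soluble $\C_2$-subgroups (via \cite{Bur20}), and carefully chosen reducible subgroups with explicitly constructed bases for the orthogonal and symplectic groups --- and many of the required base-size bounds (e.g.\ for subfield subgroups of $G_2(q)$ and ${\rm Sp}_4(q)$ with $q=q_0^2$, or $\C_2$-subgroups of type ${\rm GU}_1(q)\wr S_n$ in ${\rm U}_{2^m}(q)$) are \emph{not} in the literature: they are proved in the paper by fresh probabilistic fixed-point-ratio estimates of the form $\widehat{Q}(G,H,3)<1$. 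Your proposal treats all of this as citable, which leaves the main technical content unproved.

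Two smaller points. First, $\frat(G)=1$ for \emph{every} almost simple group (a nilpotent normal subgroup lies in $C_G(G_0)=1$), so there is no list of exceptions to compile. Second, for $G=S_n$ the paper does not mix intransitive and imprimitive subgroups: it proves the stronger statement $\b(S_n)\leqs 3$ using conjugates of one subgroup, namely $S_2\wr S_{n/2}$ for $n$ even, $S_p\wr S_{n/p}$ for $n$ odd composite, and ${\rm AGL}_k(p)$ for $n=p^k$, which is exactly why Theorem \ref{t:main2} is needed; your mixed family would at best bound $\a(S_n)$.
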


\begin{remk}\label{r:1}
As an immediate corollary, it follows that $\b(G) - \a(G) \leqs 1$ for every almost simple group $G$, which extends \cite[Corollary 2(iii)]{BGL}. It is also worth noting that there are infinitely many almost simple groups with $\a(G) = \b(G) = 3$. For example, every alternating group of the form $A_{2p}$ has this property, where $p \geqs 17$ is a prime and $2p-1$ is not a prime power (see \cite[Theorem 1(i)]{BGL}).
\end{remk}

A key ingredient in the proof of Theorem \ref{t:main1} for symmetric groups is Theorem \ref{t:main2} below, which we anticipate will be of independent interest. 

Let $G = S_n$ with $n \geqs 5$ and let $H \ne A_n$ be a maximal subgroup of $G$, so we may view $G$ as a primitive permutation group on the cosets of $H$. Consider the action of $H$ on $[1,n] = \{1, \ldots, n\}$. If $H$ acts primitively, then $b(G,H)$ is determined precisely by Burness, Guralnick and Saxl in \cite{BGS} (for example, it turns out that $b(G,H) = 2$ if $n>12$). If $H$ is intransitive, then the best known bounds are due to Halasi \cite{Halasi}, who gives the exact base size in many (but not all) cases. Now assume $H$ is imprimitive, so $n = ab$ and $H = S_b \wr S_a$ for integers $a,b \geqs 2$. Here the best existing bounds in the literature are due to Benbenishty, Cohen and Niemeyer \cite{BCN} (see Theorem \ref{t:bcn}). In particular, if $a \geqs b \geqs 3$, then \cite[Theorem 4]{BCN} gives $b(G,H) \leqs 6$. By applying work of James \cite{James}, we are able to determine the exact base size for this action whenever $a \geqs b$.

\begin{theorem}\label{t:main2}
Let $G = S_n$ and $H = S_b \wr S_a$, where $n=ab$, $a \geqs b \geqs 2$ and $(a,b) \ne (2,2)$. Then 
\[
b(G,H) = \left\{
\begin{array}{ll}
4 & \mbox{if $(a,b) = (3,2)$} \\
2 & \mbox{if $b \geqs 3$ and $a \geqs \max\{b+3,8\}$} \\
3 & \mbox{otherwise.}
\end{array}\right.
\]
\end{theorem}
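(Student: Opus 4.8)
The plan is to translate the base-size question into a combinatorial problem about partitions of $[1,ab]$ into $a$ blocks of size $b$. A point of $\O = G/H$ is such a partition $\mathcal{P}$, and a set $S$ of partitions is a base if and only if the only element of $S_{ab}$ fixing every member of $S$ is the identity; equivalently, the common refinement of the partitions in $S$, together with the constraint imposed by $\mathrm{Stab}(\mathcal{P}_i)$ being a wreath product, leaves no nontrivial symmetry. Concretely, $\bigcap_i H^{g_i} = 1$ means that no nonidentity permutation simultaneously preserves all the block systems $\mathcal{P}_1,\dots,\mathcal{P}_k$ \emph{and} permutes their blocks among themselves; since the pointwise stabiliser of the blocks of all $\mathcal{P}_i$ is already contained in each $H^{g_i}$, the key point is that two partitions can already have trivial common refinement, so the obstruction to $b(G,H)=2$ is purely about the ``block-permuting'' part. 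First I would make this precise: $b(G,H) = 2$ iff there exist two partitions whose stabilisers intersect trivially, and I would reduce this to asking for a single partition $\mathcal{Q}$ (to be used as the second partition, with the first taken to be the ``standard'' partition into intervals) such that no nonidentity element of $S_b \wr S_a$ preserves $\mathcal{Q}$.

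**Bringing in James's theorem.** The crucial external input is the work of James \cite{James} on the primitive action of $S_{ab}$ on partitions into $a$ parts of size $b$; I would use this to control exactly when two ``generic'' partitions have intersecting stabiliser equal to the identity, and more importantly to handle the boundary behaviour near $a = b$. The rough strategy for the upper bounds is: (1) for $b \geqs 3$ and $a$ large relative to $b$ (the regime $a \geqs \max\{b+3,8\}$), exhibit an explicit second partition $\mathcal{Q}$ — built by a careful ``staircase'' or ``shift'' construction so that the blocks of $\mathcal{Q}$ meet the standard blocks in a pattern with no symmetry — proving $b(G,H) \leqs 2$; (2) in the remaining cases show $b(G,H) \leqs 3$ by adding one more partition to kill the residual symmetries that survive any two-partition configuration; (3) for the small cases, especially $(a,b)=(3,2)$, do a direct (possibly computer-assisted) computation to pin down $b(G,H) = 4$, and more generally verify the finitely many sporadic small pairs by hand or machine.

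**Lower bounds.** For the lower bounds I would count, or argue structurally. To show $b(G,H) \geqs 3$ when $a < b+3$ or $a < 8$ (with $b \geqs 3$), the plan is to show that for \emph{any} two partitions $\mathcal{P}_1, \mathcal{P}_2$ into $a$ blocks of size $b$, there is always a nonidentity permutation preserving both: when $a$ is small relative to $b$ the blocks are forced to overlap ``a lot'', and a pigeonhole/intersection-pattern argument produces two points lying in the same block of $\mathcal{P}_1$ and the same block of $\mathcal{P}_2$, whose transposition is then a common symmetry — here the precise inequality $a \geqs b+3$ should emerge from optimising this overlap count (this is where James's dimension/character-theoretic estimates, or a direct combinatorial inclusion-exclusion, are doing real work). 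To show $b(G,H) \geqs 4$ for $(a,b)=(3,2)$ one shows any three partitions of $[1,6]$ into three pairs still admit a common nontrivial symmetry, which is a small finite check. The general $b(G,H) \geqs 3$ bound (the ``otherwise'' cases with $b = 2$ or moderate parameters) similarly reduces to showing two partitions never suffice.

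**Main obstacle.** The hard part will be the sharp two-partition construction in step (1): producing, for every $a \geqs \max\{b+3,8\}$ with $b \geqs 3$, an explicit partition $\mathcal{Q}$ whose intersection pattern with the standard partition is rigid — no ncontrivial element of $S_b \wr S_a$ fixes it — and simultaneously proving that the threshold $a \geqs b+3$ is exactly right (i.e. that at $a = b+2$ and below, rigidity genuinely fails). Getting the matching lower bound to meet the construction at precisely $b+3$, rather than an off-by-one bound, is the delicate point; I expect this forces a careful analysis of how a block-permutation in the $S_a$-factor can be ``absorbed'' by relabelling within blocks, which is exactly the phenomenon James's results are designed to quantify. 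The small-case verifications and the $(a,b)=(3,2)$ computation are routine by comparison, though they must be done to confirm the stated exceptional value $4$.
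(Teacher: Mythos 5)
There is a genuine gap, and it sits exactly where you declare the work to be ``routine by comparison''. Your step (2) --- ``in the remaining cases show $b(G,H) \leqslant 3$ by adding one more partition to kill the residual symmetries'' --- is not a finite check and is not automatic: the remaining cases include the entire infinite family $3 \leqslant b \leqslant a \leqslant b+2$, where the best previously known upper bound (Benbenishty--Cohen--Niemeyer) is $b(G,H) \leqslant 6$. The paper spends essentially all of its Section 2 on precisely this point, producing for each of the three subcases $a=b+2$, $a=b+1$, $a=b$ an explicit triple of partitions (two ``coordinate'' partitions $\mathcal{B},\mathcal{C}$ of a set modelled on $\mathbb{Z}/a\mathbb{Z}\times\mathbb{Z}/a\mathbb{Z}$ with one or two diagonals removed, plus a carefully perturbed third partition $\mathcal{D}$) and then running a delicate inductive argument to show the common stabiliser is trivial. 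Your proposal contains no construction and no mechanism for this step, so the theorem is not proved for infinitely many pairs $(a,b)$.

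Conversely, you allocate the bulk of your effort to the two-partition threshold --- building a rigid partition $\mathcal{Q}$ for $a \geqslant \max\{b+3,8\}$ and proving the matching lower bound at exactly $b+3$ --- but this is precisely the statement of James's theorem (\cite[Theorem 1.2]{James}) and can simply be cited, as the paper does; reproving it buys nothing. Once James is used as a black box, the case analysis is: $b=2$ is handled by \cite[Remark 1.6]{BGS} plus the direct check of $(3,2)$; the finitely many pairs with $b\geqslant 3$, $a\geqslant b+3$ and $a\leqslant 7$ are machine checks; the lower bound $b(G,H)\geqslant 3$ in all remaining cases is immediate from James; and what is left is the genuinely new upper bound $b(G,H)\leqslant 3$ for $b \leqslant a \leqslant b+2$, which your plan does not address.
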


The proof of Theorem \ref{t:main2} is constructive in the sense that we present a base of minimal size  in each case. We also obtain a corresponding result for alternating groups (see Remark \ref{r:partan}).

\vs

Finally, we turn our attention to the intersection numbers of arbitrary finite groups. In the following we write $\l(G)$ for the chief length of $G$ (so $\l(G)$ is the number of factors in a chief series for $G$) and $\delta(G)$ denotes the number of \emph{non-Frattini} chief factors of $G$, which is independent of the choice of chief series (recall that a chief factor $H/K$ of $G$ is \emph{Frattini} if it is contained in ${\rm Frat}(G/K)$). It is straightforward to show that if $G$ is nilpotent, then $\alpha(G)=\lambda(G/\frat(G))=\delta(G)$ (see \cite[Theorem 3.3]{Archer} for example). We extend the analysis to soluble groups.

\begin{theorem}\label{t:main3}
If $G$ is a finite soluble group, then $\a(G) \leqs \l(G)$. Moreover, if the derived subgroup of $G$ is nilpotent, then $\a(G) \leqs \delta(G)$.
\end{theorem}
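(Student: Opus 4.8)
The plan is to prove both statements by induction on the chief length $\lambda(G)$, exploiting the fact that $\alpha$ behaves well with respect to quotients by minimal normal subgroups. The key elementary observation is the following: if $N$ is a minimal normal subgroup of $G$, then any family of maximal subgroups of $G/N$ whose intersection is $\frat(G/N)$ lifts to a family of maximal subgroups of $G$ (of the same size) whose intersection $M$ satisfies $MN/N = \frat(G/N)$, so that $M \leqslant$ (the full preimage of $\frat(G/N)$), and conversely $\frat(G) \leqslant M$. Thus if $N$ is a Frattini chief factor, $\frat(G/N) = \frat(G)/N$ and we get $\alpha(G) \leqslant \alpha(G/N)$ directly; since $\delta(G) = \delta(G/N)$ and $\lambda(G) = \lambda(G/N)+1$ in this case, the induction goes through for both bounds at once. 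So we may assume $N$ is a non-Frattini chief factor, i.e.\ $N \not\leqslant \frat(G)$; then there is a maximal subgroup $K$ of $G$ with $N \not\leqslant K$, hence $NK = G$ and $N \cap K$ is normalised by $K$ and centralised by $N$ (as $G$ is soluble, $N$ is abelian), so $N \cap K \trianglelefteqslant G$, forcing $N \cap K = 1$ by minimality; thus $K$ is a complement to $N$ and $K \cong G/N$.

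For the first bound $\alpha(G) \leqslant \lambda(G)$, take such a complement $K$ and pull back a minimal family $\mathcal T$ realising $\alpha(G/N) = \alpha(K)$ through the isomorphism $K \cong G/N$: this gives maximal subgroups of $K$ — but I need maximal subgroups of $G$. The fix is the standard one: a maximal subgroup of $K$ is of the form $M \cap K$ for a suitable maximal subgroup $M$ of $G$ containing $N$-free complements, or more simply, use the correspondence between maximal subgroups of $G$ not containing $N$ together with those containing $N$. Concretely, the maximal subgroups of $G$ containing $N$ correspond to maximal subgroups of $G/N$, and intersecting the $\alpha(G/N)$ of these with the single maximal subgroup $K$ gives a family of size $\alpha(G/N)+1 \leqslant \lambda(G/N)+1 = \lambda(G)$ whose intersection is contained in $\frat(G/N)$-preimage intersected with $K$, which has trivial intersection with $N$ and maps onto $\frat(G/N)$; one then checks this intersection equals $\frat(G)$ (using $\frat(G) \cap N = 1$ here, which holds since $N$ is non-Frattini and $G$ is soluble — all non-Frattini chief factors are complemented and $\frat(G)$ misses the socle appropriately). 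This yields $\alpha(G) \leqslant \alpha(G/N) + 1$, and induction closes the argument.

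For the sharper bound $\alpha(G) \leqslant \delta(G)$ when $G' = \fit(G)$ is nilpotent, the extra $+1$ above must be absorbed. The point is that when $N$ is a non-Frattini chief factor, $\delta(G) = \delta(G/N) + 1$, so the recursion $\alpha(G) \leqslant \alpha(G/N)+1 \leqslant \delta(G/N)+1 = \delta(G)$ already works provided we can still run it — the only worry is the base case and ensuring $G/N$ still has nilpotent derived subgroup (it does, as $(G/N)' = G'N/N$ is a quotient of the nilpotent group $G'$). The real content of the nilpotent-derived-subgroup hypothesis is needed for the $\delta$ bound to beat $\lambda$: since $G/G'$ is abelian, the chief factors inside $G/G'$ and those inside $G'$ can be reorganised so that the number of \emph{non-Frattini} ones is controlled; in the nilpotent-$G'$ case the Frattini chief factors are exactly accounted for and one shows directly that picking one well-chosen maximal subgroup per non-Frattini chief factor suffices — essentially combining the nilpotent-group formula $\alpha(G') = \delta(G')$ (applied to understand $\frat$ on the Fitting part) with the complemented structure above the Fitting subgroup. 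I expect the main obstacle to be the bookkeeping in this last point: verifying that when we descend through a chief series, the maximal subgroups we accumulate genuinely intersect in $\frat(G)$ and not something larger, which requires carefully tracking how $\frat(G/K)$ sits relative to $\frat(G)/K$ as $K$ varies — equivalently, controlling the interaction between $\frat(G)$ and the Fitting subgroup, where the hypothesis $G' \leqslant \fit(G)$ is exactly what makes the Frattini subgroup transparent.
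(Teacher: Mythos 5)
Your argument hinges on the recursion $\a(G)\leqs \a(G/N)+1$ for a non-Frattini minimal normal subgroup $N$, obtained by lifting a minimal family from $G/N$ and adjoining one complement $K$ of $N$. This step is a genuine gap, not bookkeeping. The lifted family intersects in the full preimage of $\frat(G/N)$, and intersecting with $K$ produces a subgroup isomorphic to $\frat(G/N)$ — but $\frat(G/N)$ can strictly contain $\frat(G)N/N$ even when $N$ is complemented. A concrete failure: $G=C_5\rtimes C_4$ acting faithfully, $N=C_5$. Here $\frat(G)=1$ (the five Sylow $2$-subgroups pairwise intersect trivially), while $\frat(G/N)=\frat(C_4)=C_2$; your construction yields the two subgroups $D_{10}$ and $C_4$, whose intersection is $C_2\neq \frat(G)$. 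Worse, the recursion cannot be repaired as a numerical statement: combined with $\a(G)=\a(G/N)$ for Frattini $N$ and $\delta(G)=\delta(G/N)+1$ for non-Frattini $N$, it would yield $\a(G)\leqs\delta(G)$ for \emph{every} finite soluble group, with no hypothesis on $G'$ — but Remark \ref{r:2} exhibits soluble groups $\Gamma_k$ with $\delta(\Gamma_k)=5k$ and $\a(\Gamma_k)\geqs 6k$. So the inequality you are inducting on is false in general, and the nilpotency hypothesis in the second assertion cannot be a mere convenience absorbed by "reorganising" chief factors.

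For comparison, the paper's proof does not descend one chief factor at a time. After reducing to $\frat(G)=1$, it writes $G=F\rtimes H$ with $F=\fit(G)$ a direct product of irreducible $H$-modules, reduces to the multiplicity-free case with $C_F(H)=1$, applies induction only to $H/\frat(H)$ (lifting those maximal subgroups through $F$), and then destroys $\frat(H)$ module by module: for each chosen $V_{i}$ it uses at most two conjugates $H^{v}$ of the complement, supplied by Wolf's theorem (Theorem \ref{wo}) on completely reducible modules for nilpotent groups, or a single conjugate in the abelian case (Lemma \ref{abel}). It is precisely this regular-orbit-type input, together with the fact that $\dim_{\End_H(V)}V=1$ when $H$ is abelian, that makes the count close to $\l(G)$ (respectively $\delta(G)$ when $G'$ is nilpotent, so that $H$ is abelian). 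Some ingredient of this kind is unavoidable; a quotient-by-minimal-normal-subgroup induction alone cannot control how far $\frat(G/N)$ drifts from $\frat(G)N/N$.
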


\begin{remk}\label{r:2}
It is worth noting that the difference $\alpha(G)-\delta(G)$ can be arbitrarily large for soluble groups. For example, if $k$ is a positive integer and we take $\Gamma_k$ to be the finite soluble group defined at the end of \cite[Section 8]{BGL}, then $\delta(\Gamma_k)=5k$ and $\alpha(\Gamma_k)\geqs 6k.$ However, we can still bound $\alpha(G)$ in terms of $\delta(G)$. For example, as a corollary of 
Theorem \ref{t:main4} below, we deduce that $\alpha(G)\leqs 4\delta(G)$ for every finite soluble group $G.$
\end{remk}

Finally, we present a general bound on the intersection number of an arbitrary finite group $G$. In order to state Theorem \ref{t:main4} below, we need to introduce some notation. Let $\mathcal B_{\text{ab}}$ (respectively $\mathcal B_{\text{nonab}}$) be the set of non-Frattini chief factors of $G$ that are $G$-equivalent to some abelian (respectively, non-abelian) minimal normal subgroup of $G/\frat(G)$ (see Definition \ref{d:equiv} for the definition of \emph{$G$-equivalent}). In addition, let $\delta_G(A)$ be the number of non-Frattini chief factors in a chief series of $G$ which are $G$-equivalent to $A$ (this does not depend on the choice of chief series), and if $A$ is non-abelian, let $n_A$ be the number of composition factors of $A$.

\begin{theorem}\label{t:main4}
If $G$ is a finite group, then
\[
\alpha(G)\leqs \sum_{A \in \mathcal B_{{\rm ab}}}\delta_G(A)+\!\!\sum_{A \in \mathcal B_{{\rm nonab}}}\!\!\max\{4,\delta_G(A)\}+\!\!
	\sum_{A \in \mathcal B_{{\rm ab}}}\dim_{\End_G(A)}A+\!\!\sum_{A \in \mathcal B_{{\rm nonab}}}\left\lfloor\frac{3n_A-1}{2}\right\rfloor.
	\]
In particular, if $G$ is soluble then
	\begin{equation}\label{e:sol}
	\alpha(G)\leqs \sum_{A \in \mathcal B_{\text{ab}}}(\delta_G(A)+3).
	\end{equation}
\end{theorem}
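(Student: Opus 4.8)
The plan is to build, one chief factor at a time, a family of maximal subgroups of $G$ whose intersection lands inside $\frat(G)$, and to control how many maximal subgroups each chief factor costs us. We may pass to $G/\frat(G)$, so assume $\frat(G)=1$; then the socle of $G$ decomposes as a direct product of minimal normal subgroups $N_1 \times \cdots \times N_r$, and a chief series of $G$ refining a filtration compatible with the socle exhibits each non-Frattini chief factor as $G$-equivalent to exactly one of the $N_i$. The key local observation is that if $A$ is a minimal normal subgroup of $G$ and $H/K$ is a chief factor $G$-equivalent to $A$, then (since $H/K$ is a non-Frattini factor) there is a maximal subgroup $M$ of $G$ with $M \cap H = K$ modulo the part of the socle already handled; iterating, the chief factors $G$-equivalent to a fixed $A$ can be ``killed off'' using maximal subgroups whose intersection meets the isotypic component corresponding to $A$ trivially. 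This reduces the problem to a separate count for each $G$-equivalence class, which is exactly how the right-hand side of the claimed inequality is organised.

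The core of the argument is therefore the per-class count, and it splits according to whether $A$ is abelian or non-abelian. When $A$ is abelian, $A$ is an irreducible $\F_p[G]$-module, its isotypic component in the socle is a direct sum of $\delta_G(A)$ copies, and one needs: (a) $\delta_G(A)$ maximal subgroups to separate the $\delta_G(A)$ homogeneous ``layers'' (this is the complement/derivation-counting argument familiar from the nilpotent and soluble cases, cf. \cite[Theorem 3.3]{Archer}), plus (b) $\dim_{\End_G(A)} A$ further maximal subgroups to cut the surviving single copy of $A$ down to zero — here one uses that $G$ acting on $A$ has a base of size $\dim_{\End_G(A)} A$ in the relevant affine-type primitive action, i.e. $\dim_{\End_G(A)} A$ hyperplane-type maximal subgroups intersect trivially on $A$. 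When $A$ is non-abelian, write $A = T_1 \times \cdots \times T_{n_A}$ with the $T_i$ simple; now one uses the simple-group results of \cite{BGL}, namely $\a(T) \leqs 3$ and (after accounting for the action of $G$ permuting and inducing automorphisms on the factors) $\b$-type bounds, to produce $\lfloor (3n_A-1)/2 \rfloor$ maximal subgroups of $G$ whose intersection avoids $A$, together with $\max\{4,\delta_G(A)\}$ maximal subgroups to peel off the $\delta_G(A)$ copies of $A$ in the socle (the $4$ coming from the almost-simple bound $\b(G)\leqs 4$ of Theorem \ref{t:main1}(ii), and the $\lfloor (3n_A-1)/2\rfloor$ coming from an efficient ``diagonal plus full-support'' packing of the $n_A$ simple factors, amortising roughly $3/2$ maximal subgroups per factor). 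Summing (a)+(b) over abelian classes and the two non-abelian contributions over non-abelian classes yields the displayed bound; for the soluble consequence \eqref{e:sol} there are no non-abelian classes, $n_A$ is irrelevant, $\End_G(A)$ is a field extension of $\F_p$ so $\dim_{\End_G(A)}A$ can be as large as one likes in principle but in the soluble-group bound we simply use the crude estimate $\dim_{\End_G(A)}A \leqs 3$ — wait, that is not true in general, so instead \eqref{e:sol} must come from a different, soluble-specific argument.

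Let me restate the soluble step more carefully, since that is where I expect the real subtlety. For soluble $G$ one should not use $\dim_{\End_G(A)}A$ at all; rather, one invokes the fact that in a soluble primitive permutation group of affine type $V \rtimes G_0$, there is a base of size $3$ (indeed primitive affine soluble groups have base size at most $3$, a classical fact, with the extremes understood), so each abelian $G$-equivalence class $A$ contributes at most $\delta_G(A)$ maximal subgroups to separate the layers plus at most $3$ more to finish off the final copy of $A$, giving the $\delta_G(A)+3$ summand. Thus \eqref{e:sol} follows from the general construction by replacing the $\dim_{\End_G(A)}A$ term with the uniform bound $3$ valid in the soluble setting. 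The main obstacle throughout is bookkeeping: one must verify that the maximal subgroups chosen for different chief factors genuinely have intersection equal to $\frat(G)$ rather than merely contained in the right isotypic components — this requires an inductive argument up the chief series, at each stage choosing a maximal subgroup supplementing the current term, and checking that supplements for later (higher) factors do not ``reintroduce'' elements of earlier factors. A clean way to handle this is to work in $G$ and track the subgroup $\bigcap M_i$ as it descends through the chief series, using that each $M_i$ is chosen to be a maximal subgroup not containing the relevant chief factor; the non-Frattini hypothesis is exactly what guarantees such an $M_i$ exists at each step, and $G$-equivalence is exactly the relation that lets us reuse ``the same kind'' of maximal subgroup across the layers of one isotypic component.
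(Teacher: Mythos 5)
Your architecture matches the paper's: reduce to a per-$G$-equivalence-class count, with abelian classes costing $\delta_G(A)+\dim_{\End_G(A)}A$ and non-abelian classes costing $\max\{4,\delta_G(A)\}+\lfloor(3n_A-1)/2\rfloor$. But three steps that you treat as routine are where the actual content lies, and as written each has a genuine gap. First, the reduction itself: your claim that the classes can be handled independently ("killed off" one isotypic component at a time, with bookkeeping deferred to an inductive argument up the chief series) is precisely what needs proof. The paper does this via crowns: it shows $\frat(G)=\bigcap_{A}R_G(A)$, where $R_G(A)$ is the intersection of the normal subgroups $N$ with $G/N\cong L_A$ and $\soc(G/N)\sim_G A$, so that $\alpha(G)\leqs\sum_A\alpha(G/R_G(A))$ because maximal subgroups of each crown quotient pull back to maximal subgroups of $G$. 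Without some such formalism your "separate count for each class" does not follow from choosing supplements along a single chief series. Second, the non-abelian count: you offer no mechanism for the term $\lfloor(3n_A-1)/2\rfloor$ beyond an "amortised packing" heuristic. The actual source is the Cameron--Solomon--Turull theorem bounding the length of a subgroup chain in a transitive subgroup of $S_n$ by $\lfloor(3n-1)/2\rfloor-1$: one embeds $L$ in $H\wr S_n$, uses $2$-generation of the simple factor to produce two twisted-diagonal subgroups whose intersection with the diagonal of the crown-based power meets the base group trivially (hence embeds in a transitive subgroup of $S_n$), and then converts chain length into a count of further maximal subgroups using $\frat=1$. This is the core of the non-abelian case and is absent from your proposal.

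Third, the soluble refinement rests on a false statement: soluble primitive affine groups do \emph{not} have base size at most $3$; Seress's theorem gives base size at most $4$, and this is sharp. Note that if base size $3$ were correct, your argument would yield $\alpha\leqs\delta_G(A)+2$ per class, contradicting the sharpness of \eqref{e:sol} established in Remark \ref{r:3}. The bound $\delta_G(A)+3$ comes out correctly only with the right bookkeeping: $\delta_G(A)-1$ maximal subgroups reduce the crown quotient to a single copy $A\rtimes H$, and then the four point stabilisers from Seress's base (equivalently, $\sigma(H,A)\leqs 3$ in the paper's notation, since one base point may be taken to be $0$) finish the job. Your count of "$\delta_G(A)$ to separate the layers plus $3$ more" double-counts one subgroup and under-counts the base, and these errors happen to cancel. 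The abelian general bound (via a spanning set $a_1,\dots,a_d$ of $A$ over $\End_G(A)$ with $\bigcap_i C_H(a_i)=C_H(A)=1$) is essentially right, though the paper also proves the reverse inequality $\alpha(A^t\rtimes H)=t+\sigma(H,A)$, which you do not need here.
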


\begin{remk}\label{r:3}
Let us observe that the bound in \eqref{e:sol} for soluble groups is best possible. For example, let $t$ be a positive integer and let $R < S_{4^t}$ be the iterated wreath product of $t$ copies of $S_4$. The wreath product $H={\rm GL}_{2}(3) \wr R$ admits a faithful irreducible $H$-module $A$ of order $n=9^{4^t}$. Let $\delta$ be a positive integer and consider the semidirect product $G=A^\delta \rtimes H.$ Then $\mathcal B_{\text{ab}}=\{A\}$, $\delta_G(A)=\delta$ and $|G|=n^{\delta+c-1}24^{-1/3}$, where $c=1+\log_9(48\cdot 24^{1/3}) \sim 3.244$ is the P\'{a}lfy-Wolf constant (see the final part of \cite[Section 1]{palfy}, for example). Since $|G:H| \leqs n$ for every maximal subgroup $H$ of $G$, it follows that
\[
\alpha(G)\geqs \lceil \log_n|G|\rceil=\lceil \delta+c-1-\log_n(24)/3\rceil\geqs \delta+3
\]
if $c-1-\log_n(24)/3>2$, which holds for $n$ sufficiently large. We conclude that if $n \gg 0$, then the bound in \eqref{e:sol} is sharp. 
\end{remk}

Notice that the general upper bound in Theorem \ref{t:main4} involves the composition length of each $A \in \mathcal{B}_{{\rm nonab}}$. It remains an open problem to determine if it is possible to bound $\alpha(G)$ only in terms of $\delta(G)$ when $G$ is insoluble.

\section{Symmetric groups acting on partitions}\label{s:sym}

We begin by proving Theorem \ref{t:main2}, which will then be used in the proof of Theorem \ref{t:main1} in the next section. So let $G = S_n$ and consider the imprimitive subgroup $H = S_b \wr S_a$, where $n=ab$ and $a,b \geqs 2$. If $n=4$ then $H$ contains a nontrivial normal subgroup of $G$, so we will assume $(a,b) \ne (2,2)$. We may then view $G$ as a primitive permutation group on the set of cosets of $H$ in $G$, which we can identify with the set $\O$ of partitions of $[1,n]=\{1, \ldots, n\}$ into $a$ sets of size $b$ (we refer to the sets in such a partition as \emph{blocks}).

There are several results in the literature on the base size $b(G,H)$ for this action. For example, a theorem of Liebeck \cite{Lie} states that if $b \geqs 3$ then 
\[
b(G,H) \leqs (a-1)(b-1)+2.
\]
Asymptotically best possible bounds on $b(G,H)$ were determined more recently by Benbenishty, Cohen and Niemeyer. The following result is \cite[Theorem 4]{BCN}.

\begin{thm}\label{t:bcn}
Let $G = S_{n}$ and $H = S_b \wr S_a$, where $n=ab$ with $a \geqs 2$ and $b \geqs 3$.
\begin{itemize}\addtolength{\itemsep}{0.2\baselineskip}
\item[{\rm (i)}] If $a \geqs b$, then $2 \leqs b(G,H) \leqs 6$.
\item[{\rm (ii)}] If $a<b$, then $\lceil \log_a b \rceil \leqs b(G,H) \leqs \lceil \log_a b \rceil+3$.
\end{itemize}
\end{thm}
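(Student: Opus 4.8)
The result is \cite[Theorem 4]{BCN}, so the plan is to reconstruct a proof; the crux is to translate the base condition into combinatorics. Identify a partition $P \in \O$ with the fibres of a surjection $f_P \colon [1,n] \to [1,a]$ all of whose fibres have size $b$ (well defined up to relabelling $[1,a]$), and for $P_1, \dots, P_t \in \O$ put $F = (f_{P_1}, \dots, f_{P_t}) \colon [1,n] \to [1,a]^t$. Then $\s \in S_n$ stabilises every $P_i$ precisely when $F \o \s = T \o F$ for some $T \in S_a^t$ acting coordinatewise on $[1,a]^t$. Since the transposition of two points lying in a common block of every $P_i$ stabilises each $P_i$, a base forces $F$ to be injective; and when $F$ is injective such a $\s$ is induced by a $T$ stabilising the set $S := \operatorname{im}(F)$ of $ab$ distinct vectors in $[1,a]^t$, which is \emph{balanced} in that each coordinate takes each value in $[1,a]$ exactly $b$ times --- and conversely every balanced set of $ab$ distinct vectors comes from a $t$-tuple of partitions. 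So $\{P_1, \dots, P_t\}$ is a base if and only if $F$ is injective and the setwise stabiliser of $S$ in $S_a^t$ acts trivially on $S$, and I want the least $t$ for which such an $S$ exists.

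The lower bounds are then immediate. For (i), $b(G,H) \geqs 2$ since $H = S_b \wr S_a \neq 1$; for (ii), injectivity of $F$ forces $a^t \geqs |S| = ab > b$, hence $t \geqs \lceil \log_a b \rceil$ (this even gives $t \geqs 1 + \lceil \log_a b\rceil$, but only the stated bound is needed).

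For the upper bounds I would build $S$ --- equivalently, the partitions --- in two stages. In the \emph{separating stage} one chooses $P_1, \dots, P_m$ so that $(f_{P_1}, \dots, f_{P_m})$ is already injective, i.e.\ the successive block-labels give a balanced mixed-radix addressing of $[1,n]$ inside $[1,a]^m$: this needs $m = \lceil \log_a b \rceil$ partitions when $a < b$, but only two when $a \geqs b$ (a ``row'' partition together with one that is injective on each of its blocks, which is possible since $b \leqs a$). In the \emph{rigidifying stage} one appends a bounded number of further partitions to make the setwise stabiliser of $S$ in the enlarged $S_a^{m+r}$ collapse --- $r \leqs 3$ extra in case (ii), and at most six partitions in total in case (i). A model for this step is the case $a = b$, where $\O$ is the set of partitions into $a$ blocks of size $a$, viewed as the rows of an $a \times a$ array: taking $P_1, P_2$ to be the rows and columns and $P_3$ the cells coloured by a Latin square $L$ makes the stabiliser of $S = \{(i,j,L(i,j))\}$ the autotopism group of $L$, so choosing $L$ with trivial autotopism group --- available in all but finitely many orders, the remainder checked directly --- yields a base of size $3$. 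In general one uses an analogous generic configuration in the extra coordinates, either written down from a suitable combinatorial design or obtained via a probabilistic argument that a bounded number of random balanced partitions almost surely kills all residual symmetry, and the finitely many genuinely small pairs $(a,b)$ are handled by direct computation.

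The hard part is the rigidifying stage: showing that an \emph{absolute constant} number of extra partitions always suffices, uniformly in $a$ and $b$. The difficulty concentrates in the degenerate regimes --- when $b$ is an exact power of $a$ the separating stage fills all of $[1,a]^m$, so $S$ carries the whole symmetry group $S_a^m$, and when $a$ and $b$ are comparable and small there is little room to manoeuvre --- so naive affine or product constructions retain too much symmetry, and one has to engineer genuine combinatorial rigidity (as with the autotopism-free Latin square) while still respecting the balance condition and keeping the total count down to $6$ and $\lceil \log_a b\rceil + 3$ respectively.
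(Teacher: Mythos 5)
Your combinatorial reformulation is correct: identifying a $t$-tuple of partitions with the balanced set $S=\operatorname{im}(F)\subseteq [1,a]^t$ and observing that the tuple is a base if and only if $F$ is injective and the setwise stabiliser of $S$ in $S_a^t$ acts trivially on $S$ is a faithful translation of the base condition, and both lower bounds are then immediate (your argument for (ii) is exactly the transposition/pigeonhole observation the paper records in Remark \ref{r:two}, and in fact gives the slightly stronger bound $t\geqs 1+\lceil\log_a b\rceil$). Note, however, that the paper itself offers no proof of this statement: it is quoted from \cite{BCN}, whose proof is constructive, exhibiting explicit families of partitions and verifying triviality of the pointwise stabiliser directly (in the same spirit as the explicit bases built in Section \ref{s:sym} for $b\leqs a\leqs b+2$).

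The genuine gap is that the upper bounds $b(G,H)\leqs 6$ and $b(G,H)\leqs \lceil\log_a b\rceil+3$ --- which are the substantive content of the theorem --- are not established. Your ``rigidifying stage'' is precisely the theorem: the claim that an absolute constant number of further balanced partitions can always be appended so that the setwise stabiliser of the enlarged $S$ in $S_a^{m+r}$ acts trivially, uniformly in $a$ and $b$. You leave this at the level of a plan (``a suitable combinatorial design or \dots a probabilistic argument that a bounded number of random balanced partitions almost surely kills all residual symmetry''), and neither option is carried out nor routine: the probabilistic assertion would need to be proved with uniformity over all pairs $(a,b)$, including exactly the degenerate regimes you flag (e.g.\ $b$ an exact power of $a$, where $S$ fills $[1,a]^m$ and carries all of $S_a^m$, and small comparable $a,b$); and the Latin-square model covers only $a=b$ and itself rests on the existence of Latin squares with trivial autotopism group, which you assert but do not prove or cite precisely. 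As written, the proposal proves the lower bounds and the reformulation, but for the upper bounds it replaces the theorem by an equivalent unproven claim; to complete it you would either need to reproduce the explicit constructions of \cite{BCN} or supply a genuine (and uniform) rigidity argument for the appended partitions.
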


\begin{rem}\label{r:two}
As explained in \cite[p.1581]{BCN}, the lower bound in part (ii) follows immediately from the observation that in any collection of fewer than $\lceil \log_a b \rceil$ partitions in $\O$, there are at least two points that appear in the same block in every partition in the collection, so $G$ contains a transposition fixing every partition. It is also worth noting that the upper bound in (ii) is best possible. For example, if $(a,b) = (2,4)$ then $b(G,H) = 5 = \lceil \log_a b \rceil+3$.
\end{rem}

Here we are interested in extending part (i) of Theorem \ref{t:bcn} by determining the exact base size whenever $a \geqs b$ (and we also handle the case $b=2$); this is the content of Theorem \ref{t:main2}. A key tool to do this is the following result of James \cite[Theorem 1.2]{James}, which determines the cases with $b(G,H) = 2$.

\begin{thm}\label{t:james}
Let $G = S_{n}$ and $H = S_b \wr S_a$, where $n=ab$ with $a,b \geqs 2$ and $(a,b) \ne (2,2)$. Then $b(G,H) = 2$ if and only if $b \geqs 3$ and $a \geqs \max\{8,b+3\}$.
\end{thm}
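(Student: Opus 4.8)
The plan is to reduce the computation of $b(G,H)$ to a question about zero-one matrices with constant line sums. Identify $\O$ with the set of partitions of $[1,n]$ into $a$ blocks of size $b$, and note that $H_{G}=1$ (since $n=ab\geqs 6$ and $H=S_{b}\wr S_{a}$ is core-free in $S_{n}$), so a base of size $2$ is a pair $\{P_{1},P_{2}\}$ of partitions with trivial joint stabiliser in $G$; as $H\ne 1$ we have $b(G,H)\geqs 2$ in every case. I would then establish the reduction: $b(G,H)=2$ if and only if there is an $a\times a$ matrix $M=(m_{ij})$ with entries in $\{0,1\}$, all row sums and all column sums equal to $b$, and trivial automorphism group $\Aut(M):=\{(\pi,\rho)\in S_{a}\times S_{a}:m_{\pi(i),\rho(j)}=m_{ij}\text{ for all }i,j\}$. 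Indeed, for partitions $P_{1}=\{A_{1},\dots,A_{a}\}$, $P_{2}=\{B_{1},\dots,B_{a}\}$, put $m_{ij}=|A_{i}\cap B_{j}|$; an element of $G$ fixing both $P_{1}$ and $P_{2}$ induces $(\pi,\rho)\in S_{a}\times S_{a}$ preserving $(m_{ij})$ and carrying $A_{i}\cap B_{j}$ bijectively onto $A_{\pi(i)}\cap B_{\rho(j)}$. If some $m_{ij}\geqs 2$, permuting $A_{i}\cap B_{j}$ internally produces a nontrivial such element, so a base of size $2$ forces every $m_{ij}$ to be $0$ or $1$, and then the joint stabiliser is canonically isomorphic to $\Aut(M)$; conversely every zero-one matrix with these line sums has exactly $n$ ones and, on labelling the ones by $[1,n]$ and reading off rows and columns, is the intersection matrix of a pair of partitions in $\O$ with joint stabiliser $\Aut(M)$. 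Note that complementation $M\mapsto J-M$ (with $J$ all-ones) preserves $\Aut(M)$ and turns the line sum $b$ into $a-b$.

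For the "only if" part I would show that no such $M$ exists unless $b\geqs 3$ and $a\geqs\max\{8,b+3\}$. If $b>a$ there is no zero-one matrix with row sums $b$. If $a-b\in\{0,1,2\}$, or $b=2$, I reduce (complementing in the first case) to a zero-one matrix with line sums in $\{0,1,2\}$: one with line sums $0$ or $1$ is the zero matrix or a permutation matrix, with automorphism group $S_{a}\times S_{a}$ or $\cong S_{a}$ respectively; one with line sums $2$ is the biadjacency matrix of a vertex-disjoint union of even cycles of length $\geqs 4$, each contributing a nontrivial rotation to $\Aut(M)$. The remaining pairs with $b\geqs 3$, $a-b\geqs 3$ and $a\leqs 7$ are $(6,3)$, $(7,3)$ and $(7,4)$ (the last equivalent to $(7,3)$ by complementation), which I would settle by a direct enumeration of the $3$-regular bipartite graphs on $6+6$ and $7+7$ vertices, checking that each admits a nontrivial automorphism in $S_{a}\times S_{a}$.

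For the "if" part I would, for each $(a,b)$ with $b\geqs 3$ and $a\geqs\max\{8,b+3\}$, exhibit an explicit $a\times a$ zero-one matrix with all line sums $b$ and $\Aut(M)=1$; by the reduction and the bound $b(G,H)\geqs 2$ this yields $b(G,H)=2$. I would start from small rigid examples with $b=3$ (an asymmetric $3$-regular bipartite graph on $8+8$ vertices, supplied by the enumeration above) and propagate in two directions. To increase $a$ by one at fixed $b$, a local surgery — choose a matching of $b-1$ edges $r_{t}c_{t}$, insert new vertices $r^{*}$, $c^{*}$, and replace those edges by $\{r_{t}c^{*},\,r^{*}c_{t}:t\}\cup\{r^{*}c^{*}\}$ — adds one vertex to each side and preserves $b$-regularity. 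To increase $b$ to $b+1$ at fixed $a$, adjoin a perfect matching taken from the complement, which is $(a-b)$-regular with $a-b\geqs 3$ and hence offers many choices. At each step the inserted data must be chosen so that the new vertices, or the new matching, have a locally unique neighbourhood pattern; this forces any automorphism of the enlarged graph to fix them and hence to restrict to an automorphism of a rigid substructure, which one then propagates to pin down the whole automorphism group.

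I expect the main obstacle to be exactly this rigidity bookkeeping: arranging, uniformly in $a$ and $b$, that the construction is simultaneously zero-one, precisely $b$-regular on both sides, and provably asymmetric. "Visibly asymmetric" constructions are easy to get subtly wrong, so the crux is a clean propagating argument — identifying a distinguished row or column from its neighbourhood, forcing it to be fixed, and bootstrapping — supported by a modest finite verification at the small values of $a$ where the inductive surgery does not yet have room to act.
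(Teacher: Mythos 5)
Your reduction is correct and is exactly the right framework: identifying a pair of partitions with its intersection matrix, observing that a repeated intersection of size at least $2$ yields a transposition in the joint stabiliser, and that in the multiplicity-free case the joint stabiliser is the group of side-preserving automorphisms of the associated $b$-regular bipartite graph on $a+a$ vertices. Note that the paper does not prove this statement itself; it quotes it from \cite[Theorem 1.2]{James}, and James's proof runs through precisely this correspondence with regular bipartite graphs, so your framework coincides with the cited source rather than with any argument in the paper. Your necessity direction is essentially complete in outline: line sums in $\{0,1,2\}$ (directly, or after complementation, which swaps $b$ and $a-b$ and preserves the automorphism group) always force a nontrivial side-preserving automorphism, and the surviving pairs $(a,b)=(6,3),(7,3),(7,4)$ with $3\leqslant b\leqslant a-3$ and $a\leqslant 7$ reduce to a finite enumeration.

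The genuine gap is the sufficiency direction, which is the real content of the theorem: for every $b\geqslant 3$ and $a\geqslant \max\{8,b+3\}$ you must actually exhibit a $b$-regular bipartite graph on $a+a$ vertices with trivial side-preserving automorphism group and prove its rigidity, and your proposal only describes a surgery scheme (inserting a vertex pair to increase $a$, adjoining a perfect matching from the complement to increase $b$) while explicitly deferring the rigidity verification as ``bookkeeping''. That verification is not routine and is where the whole difficulty lies: an automorphism of the enlarged graph need not preserve the decomposition into old graph plus new matching (or old graph plus new vertices), so rigidity of the smaller graph gives no control unless the construction builds in a canonical invariant (for instance prescribed neighbourhood or codegree patterns) that forces every automorphism to respect the decomposition --- and nothing in the sketch arranges this uniformly in $a$ and $b$. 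In addition, the base case $(a,b)=(8,3)$ and the nonexistence claims for $a\in\{6,7\}$ rest on enumerations you have not carried out (these are at least finite, so they are a lesser issue). As it stands, the ``if'' half of the theorem --- the part that actually requires the explicit rigid families constructed and verified in \cite{James} --- is unproved.
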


If $b=2$ and $a \geqs 4$ then $b(G,H) = 3$ (see \cite[Remark 1.6]{BGS}) and it is easy to check that $b(G,H) = 4$ if $(a,b) = (3,2)$. Now assume $b \geqs 3$. If $a \geqs b+3$ then Theorem \ref{t:james} gives $b(G,H) = 2$ if and only if $a \geqs 8$; if $a \leqs 7$ then $(a,b)$ is one of $(6,3)$, $(7,3)$ or $(7,4)$ and in each case one checks that $b(G,H) = 3$ (for example, with the aid of {\sc Magma} it is easy to identify two elements $x,y \in G$ such that $H \cap H^x \cap H^y = 1$, which implies that $b(G,H) \leqs 3$ and therefore equality holds by Theorem \ref{t:james}). We have now established the following result.

\begin{prop}\label{p:sym1}
Let $G = S_n$ and $H = S_b \wr S_a$, where $n=ab$, $a \geqs b \geqs 2$ and $(a,b) \ne (2,2)$. If $b=2$ or $a \geqs b+3$, then  
\[
b(G,H) = \left\{
\begin{array}{ll}
4 & \mbox{if $(a,b) = (3,2)$} \\
3 & \mbox{if $a \geqs 4$ and $b=2$, or $(a,b) = (6,3)$, $(7,3)$ or $(7,4)$} \\
2 & \mbox{otherwise.}
\end{array}\right.
\]
\end{prop}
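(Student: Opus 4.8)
The plan is to derive the proposition from Theorem~\ref{t:james} (which pins down exactly when $b(G,H)=2$) together with a small amount of case analysis for the sporadic parameters not covered by that result.

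First I would dispose of the case $b=2$. When $a\geqs 4$, the group $G=S_{2a}$ is acting on the perfect matchings of $[1,2a]$, and $b(G,H)=3$ by \cite[Remark 1.6]{BGS}. The remaining possibility is $(a,b)=(3,2)$, that is, $G=S_6$ acting on the $15$ partitions of $[1,6]$ into three pairs. Here $b(G,H)\geqs 2$ since $H\neq 1$, and $b(G,H)\neq 2$ by Theorem~\ref{t:james} (as $b=2<3$), so $b(G,H)\geqs 3$; a direct check---by hand or with {\sc Magma}, the group having order only $720$ and degree $15$---then shows that no three of these partitions have trivial common stabiliser while some four do, giving $b(G,H)=4$.

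Now suppose $b\geqs 3$ and $a\geqs b+3$. Under this hypothesis the inequality $a\geqs\max\{8,b+3\}$ appearing in Theorem~\ref{t:james} reduces to $a\geqs 8$, so the theorem gives $b(G,H)=2$ as soon as $a\geqs 8$. It therefore only remains to examine the parameters with $b\geqs 3$ and $b+3\leqs a\leqs 7$: since $b=3$ forces $a\in\{6,7\}$, $b=4$ forces $a=7$, and $b\geqs 5$ forces $a\geqs 8$, these are precisely the three pairs $(a,b)=(6,3),(7,3),(7,4)$. For each of them Theorem~\ref{t:james} yields $b(G,H)\neq 2$, hence $b(G,H)\geqs 3$, and the matching upper bound $b(G,H)\leqs 3$ follows by exhibiting elements $x,y\in G$ with $H\cap H^x\cap H^y=1$, which in each case is a finite verification easily carried out in {\sc Magma}. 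This establishes $b(G,H)=3$ for these pairs and completes the proof.

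The real content of the statement is carried entirely by Theorem~\ref{t:james}; the only point needing genuine care is the exceptional pair $(3,2)$, where one must check that three partitions of $[1,6]$ into pairs never have trivial common stabiliser. This amounts to a routine (and very small) computation, so I do not expect any serious obstacle---the main work is simply organising the case division so that the hypothesis ``$b=2$ or $a\geqs b+3$'' isolates exactly the finitely many parameters on which Theorem~\ref{t:james} is silent.
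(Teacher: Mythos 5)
Your proposal is correct and follows essentially the same route as the paper: the case $b=2$, $a\geqs 4$ via \cite[Remark 1.6]{BGS}, a direct check for $(a,b)=(3,2)$, and for $b\geqs 3$, $a\geqs b+3$ a reduction via Theorem \ref{t:james} to the three residual pairs $(6,3)$, $(7,3)$, $(7,4)$, each settled by exhibiting $x,y\in G$ with $H\cap H^x\cap H^y=1$ in {\sc Magma}. No gaps; your organisation of the case division matches the paper's.
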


Therefore, to complete the proof of Theorem \ref{t:main2}, we may assume $3 \leqs b \leqs a \leqs b+2$. In this situation, by combining Theorems \ref{t:bcn} and \ref{t:james}, we have
\[
3 \leqs b(G,H) \leqs 6
\]
and our goal is to prove that $b(G,H) = 3$. As in \cite{BCN}, our approach is constructive and we will exhibit an explicit base of size $3$ in every case. We divide the analysis into three cases: $a=b+2$, $a=b+1$ and $a=b$.

\subsection{The case $a=b+2$}\label{ss:plus2}

\begin{prop}\label{p:plus2}
Let $G = S_n$ and $H = S_{b} \wr S_a$, where $n=ab$, $a = b+2$ and $b \geqs 3$. Then $b(G,H) = 3$.
\end{prop}

\begin{proof}
The case $b=3$ can be verified using {\sc Magma}, so for the remainder we will assume $b \geqs 4$. We begin by identifying $G$ with ${\rm Sym}(X)$, where  
\[
X=\{(i,j) \in \mathbb{Z}/a\mathbb{Z} \times \mathbb{Z}/a\mathbb{Z}\,:\,  i-j \neq \pm 1\},
\]
and then we identify $G/H$ with the set $\O$ of partitions of $X$ into $a$ parts of size $b$. In view of Theorem \ref{t:james}, it suffices to identify three partitions in $\O$ whose pointwise stabiliser in $G$ is trivial.

With this goal in mind, consider the following subsets of $X$
\begin{align*}
B_i & = \{(x,y) \in X\, :\, x=i\}, \;\; i = 0,1,\ldots,a-1 \\
C_i & = \{(x,y) \in X\, :\, y=i\}, \;\; i = 0,1,\ldots,a-1 \\
D_0 & = (B_0 \setminus \{(0,2)\}) \cup \{(1,3)\} = \{(1,3),(0,3),(0,4),\ldots,(0,a-2),(0,0)\} \\
D_1 & = (B_1 \setminus \{(1,3)\}) \cup \{(0,2)\} = \{(0,2),(1,4),(1,5),\ldots,(1,a-1),(1,1)\} \\
D_i & = B_i, \;\; i = 2,3,\ldots,a-3 \\
D_{a-2} & = \{(a-1,1),(a-1,2),(a-2,2),(a-2,3),\ldots,(a-2,a-4),(a-2,a-2)\} \\
D_{a-1} & = \{(a-2,0),(a-2,1),(a-1,3),(a-1,4),\ldots,(a-1,a-3),(a-1,a-1)\}
\end{align*}
and observe that  
\[
\mathcal{B} = \{B_0,\ldots,B_{a-1}\}, \;\; \mathcal{C} = \{C_0,\ldots,C_{a-1}\}, \;\;
\mathcal{D} = \{D_0,\ldots,D_{a-1}\}
\]
are all partitions in $\O$. 

Suppose $g \in \mbox{Sym}(X)$ stabilises $\mathcal{B}$, $\mathcal{C}$ and $\mathcal{D}$. We claim that $g=1$, which implies that $\{\mathcal{B},\mathcal{C},\mathcal{D}\}$ is a base for $G$. Since $(i,j) \in X$ is the unique element in the intersection $B_i \cap C_j$, it suffices to show that $g$ fixes (setwise) each block in $\mathcal{B}$ and $\mathcal{C}$.

First observe that $D_1 \setminus B_1=\{(0,2)\}$ and $D_0\setminus B_0=\{(1,3)\}$, which means that 
\[
|g(D_1\setminus B_1)| = |g(D_0\setminus B_0)| = 1.
\] 
In other words, $|g(D_1)\setminus g(B_1)| = |g(D_0)\setminus g(B_0)|=1$. Since  $|D_k\setminus B_i|=1$ if and only if $(k,i) \in \{(0,0), (1,1)\}$, and $|B_i\setminus D_k|=1$ if and only if $(k,i) \in \{(0,0),(1,1)\}$, the fact that $a \geqs 6$ implies that $\{g(D_0), g(D_1)\} = \{D_0, D_1\}$ and $\{g(B_0), g(B_1)\} = \{B_0, B_1\}$, with $g(B_0)=B_0$ if and only if $g(D_0)=D_0$.

Suppose $g(B_0)=B_1$, so $g(B_1)=B_0$, $g(D_0)=D_1$ and $g(D_1)=D_0$. Since $D_1\setminus B_1=\{(0,2)\}$ and $D_0\setminus B_0=\{(1,3)\}$, we deduce that $g(0,2)=(1,3)$ and $g(1,3)=(0,2)$. This implies that $g(C_2)=C_3$ and $g(C_3)=C_2$. However $D_0 \cap C_3 = \{(1,3),(0,3)\}$ and $g(D_0 \cap C_3) = D_1 \cap C_2 = \{(0,2)\}$ have different sizes, so we have reached a contradiction. Therefore, $g$ fixes the blocks $B_0, B_1, D_0$ and $D_1$. 

Since $D_1\setminus B_1=\{(0,2)\}$ and $D_0\setminus B_0=\{(1,3)\}$ we deduce that $g(0,2)=(0,2)$ and $g(1,3)=(1,3)$, so $g(C_j)=C_j$ for $j=2,3$. Now $D_0 \cap C_1 = \emptyset$ and by applying $g$ we obtain $D_0 \cap g(C_1) = \emptyset$. Since $g(C_2)=C_2$, this forces $g(C_1) \in \{C_1,C_{a-1}\}$. Since $a \geqs 6$, we have $|D_k \cap B_i|=2$ only if $k \in \{a-2,a-1\}$ (independently of $i$), therefore $\{D_{a-1},D_{a-2}\}$ is stabilised by $g$. Now $C_1 \cap D_{a-1}$ and $C_1 \cap D_{a-2}$ are nonempty while $C_{a-1} \cap D_{a-2} = \emptyset$, therefore $g(C_1) \neq C_{a-1}$, implying $g(C_1)=C_1$.

Visibly, we have $B_i \cap C_j = \emptyset$ if and only if $j = i \pm 1$. So if $h \in \mbox{Sym}(X)$ and $h(B_i)=B_i$ then $h(B_i \cap C_{i \pm 1}) = B_i \cap h(C_{i \pm 1}) = \emptyset$ and thus $\{h(C_{i-1}), h(C_{i+1})\} = \{C_{i-1},C_{i+1}\}$.  Similarly, if $h(C_j)=C_j$ then $\{h(B_{j-1}), h(B_{j+1})\} = \{B_{j-1},B_{j+1}\}$.

We now repeatedly apply this observation, given the constraints on $g$ we have already obtained. Firstly, since $g$ fixes $C_1$ it stabilises $\{B_0,B_2\}$. But we know that $g$ fixes $B_0$, so it must also fix $B_2$ and hence it stabilises $\{C_1,C_3\}$. Since it fixes $C_1$, it also fixes $C_3$, so it stabilises $\{B_2,B_4\}$. In turn, since $g$ fixes $B_2$, it must also fix $B_4$. By continuing the argument in this way, we deduce that $g$ fixes $B_i$ for every even $i$ and $C_j$ for every odd $j$. 

Similarly, $g$ stabilises $\{B_1,B_3\}$ since it fixes $C_2$. As before, since we already know that it fixes $B_1$, it must also fix $B_3$ and thus $g$ stabilises $\{C_2,C_4\}$. It follows that $g$ fixes $C_4$, so it stabilises $\{B_3,B_5\}$ and we deduce that it fixes $B_5$. Once again, proceeding in this way we find that $g$ fixes $B_i$ for every odd $i$ and $C_j$ for every even $j$. 

We have now shown that $g$ fixes each block in $\mathcal{B}$ and $\mathcal{C}$. As previously noted, if $i \ne j \pm 1$ then $B_i \cap C_j = \{ (i,j) \}$ and thus $g$ fixes every element in $X$. Therefore, $g=1$ and the result follows.
\end{proof}

\subsection{The case $a=b+1$}\label{ss:plus1}

\begin{prop}\label{p:plus1}
Let $G = S_n$ and $H = S_{b} \wr S_a$, where $n=ab$, $a = b+1$ and $b \geqs 3$. Then $b(G,H) = 3$.
\end{prop}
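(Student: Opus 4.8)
The plan is to imitate the construction in the proof of Proposition \ref{p:plus2}, exhibiting three explicit partitions $\mathcal{B},\mathcal{C},\mathcal{D}$ of a carefully chosen ground set $X$ of size $n=ab=b(b+1)$ whose common stabiliser in $\mathrm{Sym}(X)$ is trivial; by Theorem \ref{t:james} (which gives $b(G,H)\ge 3$ here, since $a=b+1<b+3$) this will force $b(G,H)=3$. The natural model is to take $X$ to be a $b\times a$ grid, or a suitable subset of $\mathbb{Z}/a\mathbb{Z}\times\mathbb{Z}/a\mathbb{Z}$ with one coordinate running over $a$ values and the other over a subset of size $b$, so that the ``rows'' $B_i=\{(i,*)\}$ form one partition $\mathcal{B}$ into $a$ blocks of size $b$ and the ``columns'' $C_j=\{(*,j)\}$ form a second partition $\mathcal{C}$. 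As in Proposition \ref{p:plus2}, a permutation fixing both $\mathcal{B}$ and $\mathcal{C}$ blockwise must be the identity, since each point is the unique intersection of its row with its column; so the whole difficulty is to choose the third partition $\mathcal{D}$ — a small perturbation of $\mathcal{B}$ — so as to break all the symmetries that permute rows and columns while preserving $\mathcal{B}$ and $\mathcal{C}$ setwise.

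The key steps, in order, would be: (1) fix the precise ground set $X$ and the two ``coordinate'' partitions $\mathcal{B}$, $\mathcal{C}$, recording which pairs of rows/columns are disjoint and which meet (this incidence pattern is what a $\mathcal{B}$-and-$\mathcal{C}$-preserving permutation must respect, and it is what pins down the residual symmetry group — typically a dihedral-type or cyclic-type group acting on the index set, together with possible swaps of the two partitions); (2) define $\mathcal{D}$ by moving a single point (or two points) between two adjacent blocks $B_0,B_1$, exactly as $D_0,D_1$ were obtained in Proposition \ref{p:plus2}, chosen so that the multiset of block-sizes of the symmetric differences $|D_k\setminus B_i|$, $|B_i\setminus D_k|$ singles out the pair $\{0,1\}$; (3) run the same ``propagation'' argument — from $g$ fixing or swapping $\{B_0,B_1\}$ and $\{D_0,D_1\}$, deduce it fixes the relevant columns $C_2,C_3$, rule out the swap by a size mismatch on an intersection such as $g(D_0\cap C_3)$, and then bootstrap through the disjointness relations $B_i\cap C_j=\emptyset$ to conclude $g$ fixes every row and every column; (4) conclude $g=1$ and invoke Theorem \ref{t:james}.

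The main obstacle is step (1)–(2): the residual symmetry group for the grid associated with $a=b+1$ differs from the $a=b+2$ case, so the specific perturbation $\mathcal{D}$ that worked there need not kill all symmetries here, and one has to check that a single swap of one point between $B_0$ and $B_1$ really does eliminate every nontrivial element — in particular the ``transpose-like'' symmetry exchanging $\mathcal{B}$ and $\mathcal{C}$, and any rotation of the index set $\mathbb{Z}/a\mathbb{Z}$ — which may require either a slightly more elaborate choice of $\mathcal{D}$ (moving two points, or breaking a further coincidence by hand) or a short case analysis on small $b$. Once the right $X$ and $\mathcal{D}$ are in place, the verification that $g=1$ is essentially the bookkeeping already carried out in Proposition \ref{p:plus2} and should go through with only cosmetic changes; I would also expect a handful of small values of $b$ (say $b=3,4$) to be most safely dispatched by a direct computation, exactly as was done for the sporadic pairs $(6,3),(7,3),(7,4)$ in the discussion preceding Proposition \ref{p:sym1}.
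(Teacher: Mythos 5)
Your overall strategy is the right one and matches the paper's: realise $X$ inside $\mathbb{Z}/a\mathbb{Z}\times\mathbb{Z}/a\mathbb{Z}$ (the paper takes $X=\{(i,j): j-i\neq 1\}$), let $\mathcal{B}$ and $\mathcal{C}$ be the row and column partitions, add a third partition $\mathcal{D}$ perturbing $\mathcal{B}$, and invoke Theorem \ref{t:james} for the lower bound. But the step you defer --- the actual choice of $\mathcal{D}$ --- is where the entire content of the proposition lies, and the specific fallback you propose (a single point, or two points, swapped between $B_0$ and $B_1$, ``exactly as $D_0,D_1$ were obtained in Proposition \ref{p:plus2}'') demonstrably fails. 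The reason is the asymmetry of the incidence pattern: for $a=b+2$ one has $B_i\cap C_j=\emptyset$ iff $j=i\pm 1$, and the two-directional bootstrap through these disjointness relations is what lets a perturbation localised at $B_0,B_1$ propagate to pin down every block. For $a=b+1$ the pattern is $B_i\cap C_j=\emptyset$ iff $j=i+1$ only, so a permutation preserving $\mathcal{B}$ and $\mathcal{C}$ setwise is exactly a pair $(\sigma,\tau)$ with $\tau(j)=\sigma(j-1)+1$ and $\sigma\in S_a$ otherwise arbitrary: the residual symmetry group is a full copy of $S_a$. A perturbation supported on rows $0,1$ and columns $2,3$ is stabilised by every $\sigma$ fixing $0,1,2$, leaving a subgroup isomorphic to $S_{a-3}$ fixing all three partitions; this is nontrivial for all $a\geqs 5$, i.e.\ for all $b\geqs 4$.

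What the paper actually does is a genuinely global construction: for each $i$ it swaps sets $Y_i,Z_i$ of size $i+1$ between $B_{2i}$ and $B_{2i+1}$, so that the symmetric-difference statistics $|D_k\setminus B_j|=i+1$ single out the pair $\{2i,2i+1\}$ at every level, and the proof then proceeds by induction on $i$ (with separate endgames according to the parity of $a$), using at each stage the column information accumulated so far to resolve the pair. This cascading structure is forced by the one-directional propagation and cannot be replaced by ``moving two points or breaking a further coincidence by hand''. So your proposal correctly locates the obstacle but does not overcome it; as written it is a plan with the essential construction missing, and the candidate construction it does commit to would not give a base.
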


\begin{proof}
Here we identify $G$ with ${\rm Sym}(X)$, where 
\[
X=\{(i,j) \in \mathbb{Z}/a\mathbb{Z} \times \mathbb{Z}/a\mathbb{Z}\, :\, j-i \neq  1\}
\]
and we define $\O$ to be the set of partitions of $X$ into $a$ parts of size $b$. The case $b=3$ can be verified using {\sc Magma}, so we may assume $b \geqs 4$.

We define the partitions $\mathcal{B} = \{B_0, \ldots, B_{a-1}\}$ and $\mathcal{C} = \{C_0, \ldots, C_{a-1}\}$ in $\O$ as in the proof of Proposition \ref{p:plus2}, so
\[
B_i = \{(x,y) \in X\, :\, x=i\},\;\; C_i= \{(x,y) \in X\,:\, y=i\},\;\; i = 0,1,\ldots, a-1.
\]
We also define a third partition $\mathcal{D} = \{D_0, \ldots, D_{a-1}\}$ in $\O$. 
For $i \geqs 0$ we define 
\[
Y_i = \{(2i,2),(2i,4),\ldots, (2i,2i+2)\},\;\; Z_i = \{(2i+1,3),(2i+1,5), \ldots, (2i+1,2i+3)\}
\]
and we take
\begin{align*}
D_{2i} & = (B_{2i}\setminus Y_i) \cup Z_i, \;\; i = 0,1, \ldots, \lfloor a/2 \rfloor -1 \\
D_{2i+1} & = (B_{2i+1}\setminus Z_i) \cup Y_i, \;\; i =  0,1, \ldots, \lfloor a/2 \rfloor -1 \\
D_{a-1} & = B_{a-1} \mbox{ if $a$ is odd}.
\end{align*}

Suppose $g \in \mbox{Sym}(X)$ stabilises $\mathcal{B}$, $\mathcal{C}$ and $\mathcal{D}$. We claim that $g=1$, which implies that $b(G,H) \leqs 3$. Recall that this gives the desired result because $b(G,H) \geqs 3$ by \cite[Theorem 1.2]{James}. We proceed as in the proof of the previous proposition, noting that it suffices to show that $g$ fixes each block in $\mathcal{B}$ and $\mathcal{C}$. In order to reach this conclusion, it will be useful to observe that $B_i \cap C_j=\emptyset$ if and only if $j=i+1$, hence if $g(B_i)=B_j$ then $g(C_{i+1})=C_{j+1}.$

First observe that $D_1\setminus B_1=\{(0,2)\}$ and $D_0\setminus B_0=\{(1,3)\}$, so 
\[
|g(D_1)\setminus g(B_1)| = |g(D_1\setminus B_1)| =  |g(D_0\setminus B_0)| = |g(D_0)\setminus g(B_0)|=1.
\]
Since $a \geqs 5$, $|D_k\setminus B_i| = 1$ if and only if $(k,i) \in \{(0,0), (1,1)\}$, and $|B_i\setminus D_k|=1$ if and only if $(k,i) \in \{(0,0),(1,1)\}$. It follows that 
$\{g(D_0), g(D_1)\} = \{D_0, D_1\}$ and $\{g(B_0), g(B_1)\} = \{B_0, B_1\}$. Moreover, $g(B_0)=B_0$ if and only if $g(D_0)=D_0$.

Suppose $g(B_0)=B_1$. Then $g(B_1)=B_0$, $g(D_0)=D_1$ and $g(D_1)=D_0$. Since $D_1\setminus B_1=\{(0,2)\}$ and $D_0\setminus B_0=\{(1,3)\}$, it follows that $g(0,2)=(1,3)$ and $g(1,3)=(0,2)$. Therefore $g(C_2)=C_3$ and $g(C_3)=C_2$. However $D_0 \cap C_3 = \{(1,3),(0,3)\}$ and $g(D_0 \cap C_3) = D_1 \cap C_2 = \{(0,2)\}$ have different sizes, which is a contradiction.

We have now shown that $g(B_0)=B_0$, so $g$ also fixes the blocks $B_1, D_0$ and $D_1$. We also observe that $g$ fixes $C_1$ and $C_2$.

We now argue inductively to complete the proof. Suppose we have proved that $g(B_j)=B_j$ and $g(C_k)=C_k$ for all $0\leqs j\leqs 2i-1$ and $1\leqs k\leqs 2i$ for some $i$ in the range $1 \leqs i \leqs (a-3)/2$. 

First assume $i\neq (a-3)/2$ when $a$ is odd, and $i \neq a/2-2$ when $a$ is even. Then we have $|D_k\setminus B_j| = i+1$ if and only if $(k,j) \in \{(2i,2i),(2i+1,2i+1)\}$, and $|B_j\setminus D_k| = i+1$ if and only if $(k,j) \in \{(2i,2i),(2i+1,2i+1)\}$. Therefore, $\{g(B_{2i}), g(B_{2i+1})\} = \{B_{2i}, B_{2i+1}\}$ and $\{g(D_{2i}), g(D_{2i+1})\} = \{D_{2i}, D_{2i+1}\}$, with $g(B_{2i})=B_{2i}$ if and only if $g(D_{2i})=D_{2i}$. Since $D_{2i} \cap C_{2i} = \emptyset$, $D_{2i+1} \cap C_{2i} \neq \emptyset$ and $g(C_{2i})=C_{2i}$, we deduce that $g(D_{2i}) = D_{2i}$ and $g(D_{2i+1}) = D_{2i+1}$, which in turn implies that $g$ fixes $B_{2i}$, $B_{2i+1}$, $C_{2i+1}$ and $C_{2i+2}.$

Now assume $a$ is odd and $i=(a-3)/2.$ Here $|D_k\setminus B_j| = i+1$ if and only if 
\[
(k,j) \in \{(a-3,a-3),(a-2,a-2),(a-3,a-2),(a-2,a-3)\}
\]
so
$g(\{D_{a-3},D_{a-2}\})=\{D_{a-3},D_{a-2}\}$ and
$g(\{B_{a-3},B_{a-2}\})=\{B_{a-3},B_{a-2}\}$. As before,
since $D_{a-3} \cap C_{a-3} = \emptyset$, $D_{a-2} \cap C_{a-3} \neq \emptyset$ and $g(C_{a-3})=C_{a-3}$, we deduce that $g(D_{a-3}) = D_{a-3}$ and $g(D_{a-2}) = D_{a-2}$. So either $g(B_{a-3})=B_{a-3}$ and $g(B_{a-2})=B_{a-2}$, or
\[
g(\{(a-3,0),(a-3,1),(a-3,3),\dots,(a-3,a-4)\})=g(B_{a-3}\cap D_{a-3})=B_{a-2}\cap D_{a-3},
\]
which is equal to $\{(a-2,0),(a-2,3),(a-2,5), \ldots,(a-2,a-2)\}$. But here the second possibility is incompatible with the fact that $|B_{a-3} \cap D_{a-3} \cap C_1|=1$, $B_{a-2} \cap D_{a-3} \cap C_1 = \emptyset$ and $g(C_1)=C_1$, whence $g(B_{a-3})=B_{a-3}$ and $g(B_{a-2})=B_{a-2}$, and thus $g(C_{a-2}) = C_{a-2}$ and $g(C_{a-1}) = C_{a-1}$.

Now suppose $a$ is even and $i=a/2-2$. Here $|D_k\setminus B_j| = i+1$ if and only if $|B_j\setminus D_k| = i+1$, and this happens if and only if
\[
(k,j) \in \{(a-4,a-4),(a-3,a-3),(a-2,a-1),(a-1,a-2)\}, 
\]
so $g$ stabilises the sets
\[
\{B_{a-4},B_{a-3},B_{a-2},B_{a-1}\},\;\; \{D_{a-4},D_{a-3},D_{a-2},D_{a-1}\}.
\]
Moreover, in order to prove that $g$ fixes $B_{a-4}$, $B_{a-3}$, $B_{a-2}$ and $B_{a-1}$, it suffices to show that $g$ fixes $D_{a-4}$, $D_{a-3}$, $D_{a-2}$ and $D_{a-1}$. Observe that $D_{a-4} \cap C_{a-4} = D_{a-2} \cap C_{a-4} = \emptyset$, while $D_{a-3} \cap C_{a-4}$ and $D_{a-1} \cap C_{a-4}$ are non-empty. Since $g$ fixes $C_{a-4}$, we deduce that $g$ stabilises $\{D_{a-4},D_{a-2}\}$ and $\{D_{a-3},D_{a-1}\}$. Since $D_{a-4}$ and $D_{a-3}$ intersect precisely $a/2+1$ blocks in $\mathcal{C}$, whereas $D_{a-2}$ and $D_{a-1}$ intersect $a/2$ such blocks, we deduce that $g$ fixes $D_{a-4}$, $D_{a-3}$, $D_{a-2}$ and $D_{a-1}$.

So by induction, it follows that $g$ fixes every block in $\mathcal{B}$ and $\mathcal{C}$. Since each $(i,j) \in X$ is the unique element of the intersection $B_i \cap C_j$, we conclude that $g=1$.
\end{proof}

\subsection{The case $a=b$}\label{ss:plus0} 

\begin{prop}\label{p:plus0}
Let $G = S_n$ and $H = S_a \wr S_a$, where $n=a^2$ and $a \geqs 3$. Then $b(G,H) = 3$.
\end{prop}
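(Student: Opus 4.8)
The plan is to follow exactly the strategy used for the cases $a = b+2$ and $a = b+1$: identify $G$ with $\mathrm{Sym}(X)$ for a cleverly chosen index set $X$ of size $a^2$, and then exhibit three partitions $\mathcal{B}, \mathcal{C}, \mathcal{D} \in \O$ whose common setwise stabiliser in $\mathrm{Sym}(X)$ is trivial. Since Theorem \ref{t:james} already gives $b(G,H) \geqs 3$ in this range (here $a = b$, so $a \not\geqs b+3$), producing such a triple immediately yields $b(G,H) = 3$. The natural choice is $X = \mathbb{Z}/a\mathbb{Z} \times \mathbb{Z}/a\mathbb{Z}$ with no exclusions (so $|X| = a^2 = n$), and $\mathcal{B} = \{B_0, \dots, B_{a-1}\}$, $\mathcal{C} = \{C_0, \dots, C_{a-1}\}$ the ``row'' and ``column'' partitions $B_i = \{(i,y)\}$, $C_j = \{(x,j)\}$. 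Each point $(i,j)$ is the unique element of $B_i \cap C_j$, so once $g$ is shown to fix every row-block and every column-block, it fixes $X$ pointwise and $g = 1$. The whole proof then reduces to designing $\mathcal{D}$ so that stabilising all three partitions forces $g$ to fix each $B_i$ and each $C_j$.

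First I would handle the small cases $a = 3$ and perhaps $a = 4, 5$ directly with \textsc{Magma} (as was done for $a=4$ in Proposition \ref{p:plus1}), so that in the general argument we may assume $a$ is suitably large. For the third partition I would take $\mathcal{D}$ to be a ``diagonal-type'' modification of $\mathcal{B}$: swap a small, asymmetric collection of points between pairs of consecutive rows in a pattern whose sizes strictly increase with the index, so that $g$ is forced to act trivially on the blocks of $\mathcal{D}$ and hence on the rows $B_i$. Concretely, one expects blocks $D_i$ obtained from $B_i$ by removing a set $Y_i \subseteq B_i$ of size roughly $i$ and inserting a set $Z_{i}$ of the same size drawn from an adjacent row, chosen so that the symmetric differences $|D_k \setminus B_i|$ and $|B_i \setminus D_k|$ pin down $i$ (this is the mechanism driving the induction in Proposition \ref{p:plus1}). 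Unlike the $a = b+1$ case there is now no ``defect'' column forcing $B_i \cap C_j = \emptyset$ for $j = i+1$; every row meets every column, so I would instead exploit the $\emptyset$-versus-nonempty intersections $D_k \cap C_j$ together with the counting of how many $C_j$ each $D_k$ meets, exactly as in the even case $i = a/2 - 2$ of Proposition \ref{p:plus1}, to disambiguate within each pair $\{B_{2i}, B_{2i+1}\}$.

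The inductive skeleton I would run is: (1) use the unique size-one symmetric differences involving $D_0, D_1, B_0, B_1$ to show $g$ fixes $B_0, B_1, D_0, D_1$, and hence $C_1, C_2$ (one first rules out the swap $g(B_0) = B_1$ by a size mismatch between $g(D_0 \cap C_k)$ and the corresponding block intersection, mirroring the contradiction in the earlier proofs); (2) assume $g$ fixes $B_j$ for $j \leqs 2i-1$ and $C_k$ for $k \leqs 2i$, then use the symmetric-difference counts (size $i+1$) to show $g$ stabilises $\{B_{2i}, B_{2i+1}\}$ and $\{D_{2i}, D_{2i+1}\}$, and finally break the tie using an emptiness/cardinality argument with $C_{2i}$, yielding that $g$ fixes $B_{2i}, B_{2i+1}, C_{2i+1}, C_{2i+2}$; (3) treat the wrap-around at the top indices as a separate, slightly fiddlier base case (or cases, according to the parity of $a$), just as Proposition \ref{p:plus1} does.

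The main obstacle is the \emph{design of $\mathcal{D}$}: because $a = b$ the blocks $D_i$ must stay of size exactly $a$ while still yielding a strictly monotone family of symmetric-difference cardinalities and the right pattern of empty intersections with the columns, and the cyclic wrap-around must be arranged so no unwanted symmetry of $\mathcal{D}$ survives. I expect the verification of the final wrap-around case — confirming that the candidate $\mathcal{D}$ has no automorphism permuting the last few blocks that is compatible with $g$ fixing the already-pinned-down columns — to be the delicate part, and I would be prepared to adjust the exact shape of the sets $Y_i, Z_i$ (or to peel off one or two more small values of $a$ into the \textsc{Magma} check) to make that step go through cleanly.
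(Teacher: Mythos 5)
Your strategy is exactly the one the paper follows: check small $a$ by machine, take $X$ to be the $a\times a$ grid, let $\mathcal{B}$ and $\mathcal{C}$ be the row and column partitions, and design a third partition $\mathcal{D}$ by interleaving pairs of consecutive rows over initial column segments of strictly increasing length, so that intersection statistics pin down every block; the lower bound $b(G,H)\geqs 3$ from Theorem \ref{t:james} is correctly invoked. The paper's $\mathcal{D}$ has precisely the shape you predict (checkerboard swaps between rows $2i+1$ and $2i+2$ on the first $2i+2$ columns, with parity-dependent adjustments to the last two or three blocks), and its verification is close in spirit to yours, though the bookkeeping differs slightly: rather than symmetric differences $|D_k\setminus B_j|$, the paper tabulates, for each column $C_r$ and each block $D_r$, the number of blocks of the other partition met in exactly $0$, $1$ or $2$ points, uses these multiplicity counts to fix all of $\mathcal{C}$ and $\mathcal{D}$ by induction, and only then recovers $\mathcal{B}$ from the identity $D_i\cap C_a=\{(i,a)\}$. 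Your plan of first pinning down the rows via symmetric differences would need to be rerouted along these lines, since (as you observe) there is no longer a forced empty intersection $B_i\cap C_{i+1}$ to exploit.

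The genuine gap is that the proposal never actually produces $\mathcal{D}$, and you acknowledge that its design is the main obstacle. In this problem the construction \emph{is} the proof: all of the content lies in writing down explicit blocks $D_1,\dots,D_a$ of size exactly $a$, computing the intersection data against the columns, and checking that no residual symmetry survives at the wrap-around (which the paper handles by splitting into the cases $a$ odd and $a$ even, with separate ad hoc arguments fixing the last four, respectively three, blocks). Saying that one "expects" a suitable family $Y_i,Z_i$ to exist and that one is "prepared to adjust" it does not establish existence; until a concrete $\mathcal{D}$ is exhibited and the deductions are verified, the claim $b(G,H)\leqs 3$ is unproved. So the proposal is a faithful and well-informed plan for the paper's argument, but not yet a proof.
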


\begin{proof}
For $a \in \{3,4,5\}$ it is easy to verify the bound $b(G,H) \leqs 3$ using {\sc Magma}, so we will assume $a \geqs 6$. Set $k = \lfloor a/2 \rfloor$ and identify $G$ with ${\rm Sym}(X)$, where 
\[
X= \{1,\ldots,a\} \times \{1,\ldots,a\}.
\]
Let $\O$ be the set of partitions of $X$ into $a$ subsets of size $a$. As before, we need to identify three partitions in $\O$ whose pointwise stabiliser in $G$ is trivial.

To this end, define two partitions $\mathcal{B} = \{B_1, \ldots, B_a\}$ and $\mathcal{C} = \{C_1, \ldots, C_a\}$ in $\O$ by setting
\[
B_i = \{(x,y) \in X \, :\, x=i\}, \;\; C_i = \{(x,y) \in X \, :\, y=i \},\;\; i = 1,2, \ldots, a.
\]
We now define a third partition $\mathcal{D} = \{D_1, \ldots, D_a\}$ in $\O$. First we define $D_1, \ldots, D_{2k-2}$ by setting
\begin{align*}
D_{2i+1} & = \{(2i+2,2),(2i+1,2),(2i+2,4),(2i+1,4),\ldots, (2i+2,2i+2),(2i+1,2i+2), \\
& \hspace{7mm} (2i+1,2i+3),(2i+1,2i+4),\ldots,(2i+1,a)\} 
\end{align*}
and
\begin{align*}
D_{2i+2} & = \{(2i+1,1),(2i+2,1),(2i+1,3),(2i+2,3),\ldots, (2i+1,2i+1),(2i+2,2i+1), \\
& \hspace{7mm} (2i+2,2i+3),(2i+2,2i+4),\ldots,(2i+2,a)\},
\end{align*}
for $0 \leqs i \leqs k-2$. Then for $a$ even we define
\begin{align*}
D_{2k-1} & = \{(2k-1,1),(2k-1,2),(2k,2),(2k-1,4),(2k,4),\ldots, \\
& \hspace{7mm} (2k-1,2k-2),(2k,2k-2),(2k-1,2k)\} \\
D_{2k} & = \{(2k,1),(2k-1,3),(2k,3),(2k-1,5),(2k,5),\ldots, \\
& \hspace{7mm} (2k-1,2k-1),(2k,2k-1),(2k,2k)\}
\end{align*}
and for $a$ odd we set
\begin{align*}
D_{2k-1} & = \{(2k-1,1),(2k-1,3),(2k-1,4),(2k,4),(2k-1,6),(2k,6),\ldots, \\
& \hspace{7mm} (2k-1,2k),(2k,2k),(2k-1,2k+1)\} \\
D_{2k} & = \{(2k,1),(2k-1,2),(2k,2),(2k,3),(2k-1,5),(2k,5),(2k-1,7),(2k,7),\ldots, \\
& \hspace{7mm} (2k-1,2k-1),(2k,2k-1),(2k,2k+1)\} \\
D_{2k+1} & = \{(2k+1,1),(2k+1,2),\ldots,(2k+1,2k+1)\} = B_{2k+1} = B_a.
\end{align*}

Suppose $g \in \mbox{Sym}(X)$ stabilises $\mathcal{B}$, $\mathcal{C}$ and $\mathcal{D}$. We claim that $g=1$, which is sufficient to prove the proposition. To do this, we will show that $g$ fixes each block in $\mathcal{C}$ and $\mathcal{D}$.  Since $D_i \cap C_a = \{(i,a)\}$ for all $i$, if the previous condition holds then $g$ also fixes each block in $\mathcal{B}$ and the result then follows from the fact that $B_i \cap C_j = \{(i,j)\}$.

It will be convenient to adopt the following notation. For $i \in \{0,\ldots,a\}$ we define
\begin{align*}
c_r(i) & = |\{s \in \{1,\ldots,a\}\, :\, |C_r \cap D_s|=i\}| \\ 
d_r(i) & = |\{s \in \{1,\ldots,a\}\, :\, |C_s \cap D_r|=i\}|.
\end{align*}
Since $g$ is a bijection, observe that if $g(C_r)=C_t$ then $c_r(i)=c_t(i)$ for all $i$. Similarly, if $g(D_r)=D_t$ then $d_r(i)=d_t(i)$ for all $i$. In addition, notice that  $c_r(i) = d_r(i) = 0$ if $i \not \in \{0,1,2\}$. We now consider two cases, according to the parity of $a$.

\vs

\noindent \emph{Case 1. $a \geqs 7$ is odd.}

\vs

First we assume $a \geqs 7$ is odd. For $i \in \{0,1,2\}$, the values of $c_r(i)$ and $d_r(i)$ are recorded in Tables \ref{tab:aodd1} and \ref{tab:aodd2}, respectively. Let us highlight three immediate observations:
\begin{itemize}\addtolength{\itemsep}{0.2\baselineskip}
\item[{\rm (i)}] $c_r(1)=a$ if and only if $r=a$, so $g(C_a) = C_a$.
\item[{\rm (ii)}] $d_r(1)=a$ if and only if $r=a$, so $g(D_a) = D_a$.
\item[{\rm (iii)}] $c_r(1)=1$ if and only if $r=2$, so $g(C_2) = C_2$.
\end{itemize}

\begin{table}
\[
\begin{array}{lcccccc} \hline
& r=1 & r=2 & r=3 & r=4 & r \in \{2i+1,2i+2\},\,  2 \leqs i <k & r=a \\ \hline
c_r(0) & k-1 & k & k-2 & k-1 & k-i & 0 \\
c_r(1) & 3 & 1 & 5 & 3 & 2i+1 & a \\
c_r(2) & k-1 & k & k-2 & k-1 & k-i & 0 \\ \hline
\end{array}
\]
\caption{The values of $c_r(i)$ for $a \geqs 7$ odd, $i \in \{0,1,2\}$}
\label{tab:aodd1}
\end{table}

\begin{table}
\[
\begin{array}{lccc} \hline
& r \in \{2i+1,2i+2\},\, 0 \leqs i \leqs k-2 & r \in \{2k-1,2k\} & r=a \\ \hline
d_r(0) & i+1 & k-1 & 0 \\
d_r(1) & a-2i-2 & 3 & a \\
d_r(2) & i+1 & k-1 & 0 \\ \hline
\end{array}
\]
\caption{The values of $d_r(i)$ for $a \geqs 7$ odd, $i \in \{0,1,2\}$}
\label{tab:aodd2}
\end{table}

Suppose $i \in \{0,\ldots,k-3\}$. Since $d_r(1)=a-2i-2$ if and only if $r \in \{2i+1,2i+2\}$, it follows that $g$ stabilises each of the sets $\{D_{2i+1},D_{2i+2}\}$. In addition, since $|D_{2i+1} \cap C_2|=2$ and $|D_{2i+2} \cap C_2|=0$, the fact that $g$ fixes $C_2$ implies that $g$ also fixes $D_{2i+1}$ and $D_{2i+2}$ for all $i \in \{0,\ldots,k-3\}$. In particular, since $g(D_1) = D_1$ and $D_1 \cap C_r$ is empty if and only if $r=1$, it follows that $g(C_1) = C_1$.

Let us also observe that $d_r(1)=3$ if and only if $r \in \{2k-3,2k-2,2k-1,2k\}$, so the set $\{D_{2k-3},D_{2k-2},D_{2k-1},D_{2k}\}$ is stabilised by $g$. Now $|D_r \cap C_1|$ equals $0,2,1,1$ and $|D_r \cap C_2|$ equals $2,0,0,2$ for $r=2k-3$, $2k-2$, $2k-1$, $2k$, respectively. Since $C_1$, $C_2$ are fixed and $\{D_{2k-3},D_{2k-2},D_{2k-1},D_{2k}\}$ is stabilised, we deduce that $D_{2k-3}$, $D_{2k-2}$, $D_{2k-1}$ and $D_{2k}$ are fixed. We have now shown that $D_i$ is fixed for all $i \in \{1, \ldots, a\}$.

We know that $C_1,C_2$ and $C_a$ are fixed. Since $|D_3 \cap C_r|=0$ only if $r \leqs 3$, it follows that $C_3$ is fixed. Similarly, $|D_3 \cap C_r|=2$ only if $r \leqs 4$, so $C_4$ is fixed. In this way, if we assume that $C_1,\ldots,C_{2i}$ are fixed for some $i$ with $1 \leqs i < k$, then we can deduce that $C_{2i+1}$ and $C_{2i+2}$ are also fixed. Indeed, we have $|D_{2i+1} \cap C_r|=0$ only if $r \leqs 2i+1$, so $C_{2i+1}$ is fixed, and similarly $|D_{2i+1} \cap C_r|=2$ only if $r \leqs 2i+2$, so $C_{2i+2}$ is fixed.

We conclude that $g$ fixes every block in $\mathcal{C}$ and $\mathcal{D}$. As explained above, this forces $g=1$ as required.

\vs

\noindent \emph{Case 2. $a \geqs 6$ is even.}

\vs

To complete the proof of the proposition, we may assume that $a \geqs 6$ is even. 
For $i \in \{0,1,2\}$, we present the values of $c_r(i)$ and $d_r(i)$ in Tables \ref{tab:aeven1} and \ref{tab:aeven2} and we record the following sequence of deductions: 

\vspace{1mm}

\begin{itemize}\addtolength{\itemsep}{0.2\baselineskip}
\item[{\rm (i)}] $c_r(1)=a$ if and only if $r=a$, so $g(C_a) = C_a$.

\item[{\rm (ii)}] $c_r(1)=a-2$ if and only if $r=a-1$, so $g(C_{a-1}) = C_{a-1}$.

\item[{\rm (iii)}] $d_r(1)=a-2$ if and only if $r \in \{1,2\}$, so $g$ stabilises $\{D_1,D_2\}$.

\item[{\rm (iv)}] $|D_r \cap C_{a-1}|=0$ if and only if $r=a-1$ and $g(C_{a-1})= C_{a-1}$, so   
$g(D_{a-1}) = D_{a-1}$.

\item[{\rm (v)}] $|D_r \cap C_{a-1}|=2$ if and only if $r=a$ and $g(C_{a-1})= C_{a-1}$, so $g(D_a) = D_a$.

\item[{\rm (vi)}] $|D_{a-1} \cap C_r|=1$ if and only if $r \in \{1,a\}$, so $g(C_1) = C_1$ since $D_{a-1}$ and $C_a$ are fixed. 

\item[{\rm (vii)}] We have $|D_1 \cap C_1|=0$, $|D_2 \cap C_1|=2$ and $g(C_1)=C_1$. Since $g$ stabilises $\{D_1,D_2\}$, it follows that $g(D_i) = D_i$ for $i=1,2$. 

\item[{\rm (viii)}] $|D_1 \cap C_r|=2$ if and only if $r=2$, so $g(C_2) = C_2$ since $g(D_1) = D_1$. 
\end{itemize}

\vspace{1mm}

\begin{table}
\[
\begin{array}{lcccccc} \hline
& r=1 & r=2 & r \in \{2i+1,2i+2\},\, 1 \leqs i \leqs k-2 & r=2k-1 & r=2k \\ \hline
c_r(0) & k-1 & k & k-i & 1 & 0 \\ 
c_r(1) & 2 & 0 & 2i & 2k-2 & 2k \\ 
c_r(2) & k-1 & k & k-i & 1 & 0 \\ \hline
\end{array}
\]
\caption{The values of $c_r(i)$ for $a \geqs 6$ even, $i \in \{0,1,2\}$}
\label{tab:aeven1}
\end{table}

\begin{table}
\[
\begin{array}{lcc} \hline
& r \in \{2i+1,2i+2\},\, 0 \leqs i \leqs k-2 & r \in \{2k-1,2k\} \\ \hline
d_r(0) & i+1 & k-1  \\ 
d_r(1) & a-2i-2 & 2 \\ 
d_r(2) & i+1 & k-1 \\ \hline
\end{array}
\]
\caption{The values of $d_r(i)$ for $a \geqs 6$ even, $i \in \{0,1,2\}$}
\label{tab:aeven2}
\end{table}

Since $d_r(1)=a-2i-2$ if and only if $r \in \{2i+1,2i+2\}$, we deduce that $g$ stabilises each set $\{D_{2i+1},D_{2i+2}\}$ with $i=0,\ldots,k-3$. As noted above, $g$ fixes $D_{a-1}$ and $D_{a}$, so $\{D_{a-3},D_{a-2}\}$ is also stabilised. Now $|D_{2i+1} \cap C_1|=0$ and $|D_{2i+2} \cap C_1|=2$ for all $i=0,\ldots,k-2$, so the fact that $g(C_1) = C_1$ implies that $g$ fixes $D_{2i+1}$ and $D_{2i+2}$ for each $i$ in this range. We have now shown that $g$ fixes every block in $\mathcal{D}$.  

As explained above, we know that $C_1,C_2,C_{a-1}$ and $C_a$ are fixed by $g$. If we assume that $g$ fixes $C_1,\ldots,C_{2i}$ for some $i \leqs k-2$, then we can prove that $g$ also fixes $C_{2i+1}$ and $C_{2i+2}$. Indeed, $|D_{2i+1} \cap C_r|=0$ only if $r \leqs 2i+1$, so $C_{2i+1}$ is fixed. Similarly, $|D_{2i+1} \cap C_r|=2$ only if $r \leqs 2i+2$ and we deduce that $C_{2i+2}$ is fixed.

Therefore, by induction we see that $g$ fixes every block in $\mathcal{C}$ and $\mathcal{D}$, which implies that $g=1$, as explained above.
\end{proof}

By combining Propositions \ref{p:sym1}--\ref{p:plus0}, the proof of Theorem \ref{t:main2} is complete.

\begin{rem}\label{r:partan}
We can also determine the exact base size for the corresponding action of the alternating group. Set $G = A_n$ and $H = (S_b \wr S_a) \cap G$, where $n=ab$, $a \geqs b \geqs 2$ and $(a,b) \ne (2,2)$. Then as noted in \cite[Remark 5.3]{James}, we have $b(G,H) = 2$ if and only if $b \geqs 3$ and $a \geqs b+\e$, where
$\e=2$ if $b \geqs 5$, otherwise $\e=3$. One checks that $b(G,H) = 3$ if $(a,b) = (3,2)$ and so by combining the result in \cite{James} with Theorem \ref{t:main2}, we deduce that 
\[
b(G,H) = \left\{\begin{array}{ll}
2 & \mbox{if $b \geqs 3$ and $a \geqs b+\e$} \\
3 & \mbox{otherwise.}
\end{array}\right.
\]
\end{rem}

\section{Almost simple groups}\label{s:as}

In this section we prove Theorem \ref{t:main1}, so $G$ is an almost simple group and we will divide the proof into various parts, according to the structure of the socle $G_0$. Since $\a(G) \leqs \b(G)$,  it suffices to show that in the vast majority of cases, $G$ has a core-free maximal subgroup $H$ with $b(G,H) \leqs 3$. For the handful of exceptions with $G \cong S_6$ or $G_0 \cong {\rm U}_{4}(2)$, it is straightforward to check that $\a(G) \leqs 3$ unless $G \cong {\rm U}_{4}(2).2$, which gives the desired result. So our main aim throughout this section is to demonstrate the existence of a faithful primitive action of $G$ with base size $3$ (excluding the exceptions highlighted above).

There is an extensive literature on base sizes for primitive actions of almost simple groups. One of the main results in this area establishes a conjecture of Cameron from the 1990s and it is proved in the sequence of papers \cite{B07, BGS, BLSh, BOW}. This result states that if $G \leqs {\rm Sym}(\O)$ is an almost simple primitive group with point stabiliser $H$, then either $G$ is \emph{standard}, or $b(G,H) \leqs 7$ (with equality if and only if $G$ is the Mathieu group ${\rm M}_{24}$ in its natural action of degree $24$). Roughly speaking, the standard groups arise when $G_0 = A_n$ and $\O$ is a set of subsets or partitions of $[1,n]$, or $G_0$ is a classical group with natural module $V$ and $\O$ is a set of subspaces (or pairs of subspaces) of $V$. Stronger results have since been determined in a number of special cases. For instance, if $H$ is soluble then the precise base size of $G$ is computed in \cite{Bur20}, which shows that the bound $b(G,H) \leqs 5$ is best possible.

We will draw extensively on this earlier work and in several cases we will need to strengthen existing bounds on the base sizes of certain almost simple primitive groups. 

\subsection{Probabilistic methods}\label{ss:prob}

Before we begin the proof of Theorem \ref{t:main1}, we need to recall an important approach for deriving bounds on $b(G,H)$ in terms of fixed point ratio estimates. This probabilistic method was originally introduced by Liebeck and Shalev \cite{LSh99} and it plays an essential role in the proof of the aforementioned conjecture of Cameron on base sizes for almost simple groups.

Let $G \leqs {\rm Sym}(\O)$ be a finite transitive permutation group with point stabiliser $H$ and let $x_1, \ldots, x_k$ represent the conjugacy classes in $G$ of elements of prime order. For $x \in G$, let
\[
{\rm fpr}(x,\O) = \frac{|x^G \cap H|}{|x^G|}
\]
be the \emph{fixed point ratio} of $x$, which is simply the proportion of points in $\O$ fixed by $x$. For a positive integer $c$ we define
\begin{equation}\label{e:qhat}
\widehat{Q}(G,H,c) = \sum_{i=1}^{k}|x_i^G|\!\cdot\!{\rm fpr}(x_i,G/H)^c.
\end{equation}

Then as explained in the proof of \cite[Theorem 1.3]{LSh99}, the expression $\widehat{Q}(G,H,c)$ is an upper bound on the probability that a randomly chosen $c$-tuple of points in $\O$ does \emph{not} form a base for $G$. This yields the following result, which provides a useful method for bounding the base size $b(G,H)$.

\begin{lem}\label{l:basic}
If $\widehat{Q}(G,H,c) < 1$ then $b(G,H) \leqs c$.
\end{lem}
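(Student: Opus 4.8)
The statement to prove is Lemma~\ref{l:basic}: if $\widehat{Q}(G,H,c) < 1$ then $b(G,H) \leqs c$.

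\medskip

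The plan is to run the standard probabilistic argument originating in the work of Liebeck and Shalev. First I would set up the probability space: choose a $c$-tuple $(\O_1, \ldots, \O_c)$ of points of $\O$ uniformly and independently at random, and let $P$ denote the probability that this tuple fails to be a base for $G$. A tuple fails to be a base precisely when its pointwise stabiliser is nontrivial, and since every nontrivial finite group contains an element of prime order, this happens if and only if there is some $x \in G$ of prime order fixing all of $\O_1, \ldots, \O_c$. Hence, writing $x_1, \ldots, x_k$ for representatives of the conjugacy classes of prime-order elements and unioning over the conjugates in each class, a union bound gives
\[
P \leqs \sum_{i=1}^{k} \sum_{y \in x_i^G} \mathbb{P}\big(y \text{ fixes } \O_1, \ldots, \O_c\big).
\]

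\medskip

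Next I would evaluate each term. Because the $\O_j$ are chosen independently and uniformly, the probability that a fixed element $y$ fixes all $c$ of them is $\big(|{\rm Fix}(y)|/|\O|\big)^c$, and since $G$ is transitive on $\O$ this ratio equals ${\rm fpr}(y,\O)$ as defined in the excerpt; moreover ${\rm fpr}$ is constant on conjugacy classes, so for $y \in x_i^G$ this probability is ${\rm fpr}(x_i,G/H)^c$. Summing over the $|x_i^G|$ conjugates in each class and then over $i$ yields exactly
\[
P \leqs \sum_{i=1}^{k} |x_i^G| \cdot {\rm fpr}(x_i,G/H)^c = \widehat{Q}(G,H,c).
\]
Therefore, if $\widehat{Q}(G,H,c) < 1$ then $P < 1$, so with positive probability a random $c$-tuple is a base; in particular at least one $c$-tuple is a base, and hence $b(G,H) \leqs c$.

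\medskip

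This is essentially a repackaging of the argument in the proof of \cite[Theorem 1.3]{LSh99}, so there is no serious obstacle; the only point requiring a little care is the reduction to prime-order elements (so that the union is over a finite, explicitly indexed set of classes rather than all of $G$), which is immediate from Cauchy's theorem applied to a nontrivial pointwise stabiliser. I would keep the write-up short, since the result is standard and is quoted directly from the literature.
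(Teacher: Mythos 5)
Your argument is correct and is precisely the standard Liebeck--Shalev probabilistic argument that the paper itself invokes (it states the lemma with only a pointer to the proof of \cite[Theorem 1.3]{LSh99}, namely that $\widehat{Q}(G,H,c)$ bounds the probability that a random $c$-tuple fails to be a base). The union bound over prime-order classes, the identification of ${\rm fpr}(x,\O)$ with $|{\rm Fix}(x)|/|\O|$ for a transitive action, and the reduction to prime order via Cauchy's theorem are all exactly as intended, so there is nothing to add.
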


In order to estimate $\widehat{Q}(G,H,c)$, we will frequently apply \cite[Lemma 2.1]{B07}, which records the following basic observation.

\begin{lem}\label{l:bound}
Suppose $x_{1}, \ldots, x_{m}$ represent distinct $G$-classes such that $\sum_{i}{|x_{i}^{G}\cap H|}\leqs A$ and $|x_{i}^{G}|\geqs B$ for all $i$. Then 
\[
\sum_{i=1}^{m} |x_i^{G}| \cdot \left(\frac{|x_i^{G} \cap H|}{|x_i^{G}|}\right)^c \leqs B(A/B)^c
\]
for every positive integer $c$.
\end{lem}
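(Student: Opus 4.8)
The final statement is Lemma~\ref{l:bound}, an elementary but frequently-used counting estimate. Here is how I would prove it.

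\medskip

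\textbf{Proof strategy.} The plan is to reduce the whole claim to a single convexity (or monotonicity) observation about the function $t \mapsto t^c$ on the nonnegative reals, since $c \geqslant 1$ is an integer and hence $t^c$ is convex and increasing. Write $r_i = |x_i^G \cap H|/|x_i^G|$ for the fixed point ratio of $x_i$, and $a_i = |x_i^G \cap H|$, so that $r_i = a_i/|x_i^G|$ and the left-hand side is $\sum_{i=1}^m |x_i^G| \, r_i^{\,c}$. First I would bound $|x_i^G| \, r_i^{\,c} = a_i \, r_i^{\,c-1}$, and then control $r_i^{\,c-1}$ using the hypotheses: since $|x_i^G| \geqslant B$ we have $r_i \leqslant a_i/B$, and since $\sum_i a_i \leqslant A$ we have $a_i \leqslant A$, hence $r_i \leqslant A/B$. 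Therefore $r_i^{\,c-1} \leqslant (A/B)^{c-1}$ for each $i$ (note $A/B \geqslant 0$ and the exponent $c-1 \geqslant 0$, so this step is valid even when $c=1$, where it reads $1 \leqslant 1$).

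\medskip

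\textbf{Putting it together.} Summing over $i$ gives
\[
\sum_{i=1}^m |x_i^G| \, r_i^{\,c} \;=\; \sum_{i=1}^m a_i \, r_i^{\,c-1} \;\leqslant\; \left(\frac{A}{B}\right)^{c-1} \sum_{i=1}^m a_i \;\leqslant\; \left(\frac{A}{B}\right)^{c-1} A \;=\; B \left(\frac{A}{B}\right)^{c},
\]
which is the desired bound. One should also note the degenerate cases: if $A/B < 1$ the inequality $r_i \leqslant A/B$ combined with $a_i \geqslant 0$ still gives everything, and if some $a_i = 0$ that term simply vanishes and contributes nothing, so no case analysis is really needed.

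\medskip

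\textbf{Main obstacle.} Frankly, there is no serious obstacle here: the only thing to be careful about is the bookkeeping with the exponent $c-1$ versus $c$, and the observation that bounding $r_i$ by $A/B$ (rather than by the sharper $a_i/B$) and pulling out $c-1$ copies of it is exactly what produces the clean product form $B(A/B)^c$; using $a_i/B$ for all $c-1$ factors would give a weaker-looking $B^{1-c} \sum_i a_i^{\,c}$, so the point is to keep one factor of $a_i$ and uniformly bound the rest. This is \cite[Lemma 2.1]{B07} and the argument above is the standard one.
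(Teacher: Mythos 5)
Your proof is correct; the paper itself does not reprove this lemma (it simply quotes \cite[Lemma 2.1]{B07}), and your argument is essentially the standard one given there. The only cosmetic difference is that \cite{B07} writes the left-hand side as $B^{1-c}\sum_i a_i^c$ and invokes $\sum_i a_i^c \leqslant (\sum_i a_i)^c$ for nonnegative $a_i$ and $c\geqslant 1$, whereas you bound each factor $r_i^{c-1}$ by $(A/B)^{c-1}$ directly — the two manipulations are interchangeable and equally elementary.
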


\subsection{Alternating and sporadic groups}\label{ss:spor}

We begin the proof of Theorem \ref{t:main1} by handling the almost simple groups with socle an alternating or sporadic group. 

\begin{prop}\label{p:alt}
If $G$ is an almost simple group with socle $G_0 = A_n$, then $\b(G) \leqs 4$, with equality if and only if $G = S_6$.
\end{prop}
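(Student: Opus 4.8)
The plan is to split into the two cases $G_0 = A_n$ with $G = A_n$ or $S_n$, treating a handful of small $n$ computationally and the generic range via a uniform construction together with the probabilistic tools of Section \ref{ss:prob}. Since $\a(G) \leqs \b(G)$, it suffices to exhibit a core-free maximal subgroup $H$ with $b(G,H) \leqs 4$ (and, for the equality claim, to verify $\b(G) \geqs 4$ when $G = S_6$ and $\b(G) \leqs 3$ otherwise). For small degrees (say $n \leqs 12$, covering the sporadic-like behaviour of $A_5,\dots,A_{12}$ and $S_5,\dots,S_{12}$, plus the exceptional isomorphisms and the near-exceptions $S_6$, $A_6$) I would simply appeal to \textsc{Magma}: compute, for each core-free maximal $H$, a minimal base, and read off $\b(G)$; this pins down the $S_6$ exception (where $\b(S_6) = 4$ arises from the outer action related to the $\mathrm{Sp}_4(2)$-isomorphism) and confirms $\b(G) = 3$ in all other small cases, matching Remark \ref{r:1}.

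For the generic range I would use the imprimitive action. If $n = ab$ is composite with $a \geqs b \geqs 2$ and $(a,b) \neq (2,2)$, take $H = S_b \wr S_a$ (intersected with $G$ when $G = A_n$); Theorem \ref{t:main2} and Remark \ref{r:partan} give $b(G,H) \leqs 3$ except in the tiny case $(a,b) = (3,2)$, i.e. $n = 6$, already handled above. Thus the only genuinely open subcase is $n$ prime (or $n \in \{$small composite exceptions$\}$). When $n$ is prime, the natural intransitive maximal subgroups $H = (S_k \times S_{n-k}) \cap G$ are available: for these the base size for the action on $k$-subsets of $[1,n]$ is controlled by \cite{BGS} (and \cite{Halasi}), and in particular for $2 \leqs k \leqs n/2$ with $n$ large one has $b(G,H) \leqs 3$, indeed $b(G,H) = 2$ once $n$ is large relative to $k$; taking $k = 2$ already suffices for all large primes $n$. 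The remaining small primes $n \in \{5,7,11\}$ fall under the computational check. Alternatively, for $G = S_n$ one may invoke the primitive maximal subgroups when they exist, but the subset action with $k=2$ is cleaner and uniformly available.

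Assembling: for every $G$ with $G_0 = A_n$ there is a core-free maximal $H$ with $b(G,H) \leqs 3$, except for $G = S_6$ where the minimum over all faithful primitive actions is $4$; hence $\b(G) \leqs 4$ with equality precisely when $G = S_6$. The equality statement also requires checking $\a(G) \leqs \b(G) \leqs 3$ strictly when $G \neq S_6$, which is immediate from the $b(G,H) \leqs 3$ bound, and a direct \textsc{Magma} verification that $\b(S_6) = 4$ (no two or three conjugates of any core-free maximal subgroup of $S_6$ intersect in $\frat(S_6) = 1$, while four do). I expect the main obstacle to be the bookkeeping for the handful of intermediate-degree cases: when $n$ is composite but small the bound from Theorem \ref{t:main2} may force $(a,b)$ into a configuration that only barely gives $b(G,H) = 3$, and when $n$ is prime one must be slightly careful that the chosen intransitive $H$ is actually maximal in $G$ (e.g. $k \neq n/2$ is automatic for $n$ prime) and that the quoted base-size bounds from \cite{BGS} apply in the precise regime needed; none of this is deep, but it is where the argument must be stated with care rather than waved through. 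The genuinely new input — Theorem \ref{t:main2} — does the heavy lifting for the bulk of composite $n$, so the proposition is essentially a corollary of that result plus finite checking.
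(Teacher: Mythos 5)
Your reduction for composite degrees is sound and is essentially what the paper does: for $n=ab$ composite with $a \geqs b \geqs 2$, the imprimitive subgroup $S_b \wr S_a$ (or its intersection with $A_n$) together with Theorem \ref{t:main2} and Remark \ref{r:partan} gives $b(G,H) \leqs 3$ outside the small cases checked by computer, and the $G = A_n$ case can in any event be quoted directly from \cite[Theorem 1]{BGL}. The genuine gap is in your treatment of prime degree. The action of $S_n$ on $k$-subsets has \emph{large} base size when $k$ is small, not small: if $\{A_1,\dots,A_m\}$ is a base of $2$-subsets, then any two points of $[1,n]$ lying in exactly the same collection of the $A_i$ are interchanged by a transposition fixing every $A_i$ setwise, so the $A_i$ must separate $[1,n]$ into singletons; since $\left|\bigcup_i A_i\right| \leqs 2m$ and the uncovered points form a single cell, this forces $m \geqs (n-1)/2$. (Halasi's theorem \cite{Halasi} gives the exact value $\lceil (2n-2)/(k+1)\rceil$ for $n \geqs k^2$, which for $k=2$ is of order $2n/3$.) So your assertion that ``$b(G,H)=2$ once $n$ is large relative to $k$'' is exactly backwards --- small base size for the subset action requires $k$ comparable to $n/2$ --- and taking $k=2$ for large primes $n$ fails badly.

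The repair is the alternative you mention and then discard: for $n=p$ prime one must use a \emph{primitive} maximal subgroup. The paper takes $H = {\rm AGL}_1(p)$ (and more generally ${\rm AGL}_k(p)$ when $n=p^k$ with $p$ the smallest prime divisor of $n$), for which \cite[Theorem 1.1]{BGS} gives $b(G,H)=2$ whenever $n>12$; this is precisely the regime where the primitive-stabiliser results of \cite{BGS} apply, whereas \cite{Halasi} governs the intransitive case and delivers the opposite of what you need. With that substitution (and the computational check extended to $n \leqs 14$, which also absorbs the $S_6$ equality case and the small primes), the argument closes and coincides with the paper's proof.
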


\begin{proof}
The groups with $n \leqs 14$ can be checked using {\sc Magma}, so we may assume $G = S_n$ or $A_n$ with $n \geqs 15$. If $G = A_n$, then the result follows from \cite[Theorem 1]{BGL}, so we can assume $G = S_n$. If $n= 2m$ is even, then Theorem \ref{t:main2} gives $b(G,H) = 3$ for $H = S_2 \wr S_m$.

To complete the proof, we may assume $G = S_n$ and $n \geqs 15$ is odd. Let $p \geqs 3$ be the smallest prime divisor of $n$. If $n=p^{k}$ for some $k \geqs 1$ then \cite[Theorem 1.1]{BGS} gives $b(G,H) = 2$ for $H = {\rm AGL}_{k}(p)$, otherwise Theorem \ref{t:main2} gives $b(G,H) \leqs 3$ for $H = S_p \wr S_{n/p}$.
\end{proof}

\begin{prop}\label{p:spor}
If $G$ is an almost simple sporadic group with socle $G_0$, then $\b(G) \leqs 3$, with equality if and only if $G_0 = {\rm M}_{22}$.
\end{prop}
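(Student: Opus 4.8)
The plan is to prove Proposition~\ref{p:spor} by a largely computational case analysis, exploiting the fact that there are only finitely many almost simple groups with sporadic socle, together with the extensive literature on base sizes for such groups. The overall strategy is the same as for the alternating case: since $\a(G) \leqs \b(G)$, it suffices to exhibit a core-free maximal subgroup $H$ with $b(G,H) \leqs 3$, except for the distinguished case $G_0 = {\rm M}_{22}$, where we must additionally verify that no faithful primitive action has base size $2$ (so that $\b(G) = 3$ exactly).

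First I would dispose of the bulk of the groups by invoking the resolution of Cameron's conjecture from \cite{Bur3, BGS, BLSh, BOW}: for an almost simple primitive group with sporadic socle that is not standard, we have $b(G,H) \leqs 7$, and in fact the detailed results in those papers (and in \cite{Bur20} for soluble point stabilisers) pin down the base size precisely in almost all cases. For each sporadic simple group $G_0$ and each possibility for $G \in \{G_0, G_0.2\}$, I would identify a maximal subgroup $H$ with $b(G,H) = 2$ or $3$; for the large groups this can typically be read off from existing tables of base sizes (e.g. those compiled in the Cameron conjecture papers), while for the smaller groups one can simply compute directly with {\sc Magma} using the explicit permutation or matrix representations available in the web Atlas. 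Concretely, for each $G_0$ one searches the list of conjugacy classes of maximal subgroups for a core-free $H$ and checks whether some pair (or triple) of conjugates of $H$ intersects trivially; the probabilistic tool of Lemma~\ref{l:basic}, combined with fixed point ratio bounds for sporadic groups, can be used to certify $b(G,H) \leqs 3$ in the few cases where a direct computation is unwieldy.

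The main obstacle — and the only part requiring genuine care rather than routine checking — is the ${\rm M}_{22}$ case. Here I must show two things: that $\b(G) \leqs 3$ for $G = {\rm M}_{22}$ and $G = {\rm M}_{22}.2$, and that $\b({\rm M}_{22}) = 3$ precisely (i.e. the bound is attained, so no faithful primitive action has a base of size $2$). The lower bound requires checking every conjugacy class of maximal subgroups $H$ of ${\rm M}_{22}$ and verifying that for each one, no two conjugates of $H$ intersect trivially; this is a finite verification that can be carried out with {\sc Magma}, and it is consistent with the known base size data for ${\rm M}_{22}$ (for instance, in its $2$-transitive action of degree $22$ the base size is $3$, and one checks the remaining primitive actions similarly). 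The upper bound $\b({\rm M}_{22}) \leqs 3$ and $\b({\rm M}_{22}.2) \leqs 3$ then follows by exhibiting an explicit base of size $3$ in a suitable action.

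Finally, for the remaining sporadic groups I would note that in every case one in fact finds a maximal subgroup with $b(G,H) = 2$, or at worst $b(G,H) = 3$, so that $\b(G) \leqs 3$; since none of these groups is isomorphic to ${\rm M}_{22}$ or ${\rm M}_{22}.2$ with $\b = 3$ being forced by all maximal subgroups simultaneously failing, equality holds only for $G_0 = {\rm M}_{22}$. This completes the proof. I expect the write-up to be short, essentially reducing to a table of maximal subgroups and base sizes together with a {\sc Magma} verification for ${\rm M}_{22}$.
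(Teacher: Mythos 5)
Your proposal is correct and takes essentially the same route as the paper: the paper's entire proof is to cite the main theorem of \cite{BOW}, which determines $b(G,H)$ for every faithful primitive action of every almost simple sporadic group, and your case-by-case reading of base size tables (including the verification that every primitive action of ${\rm M}_{22}$ and ${\rm M}_{22}.2$ has base size at least $3$, with $3$ attained) is exactly the content of that theorem.
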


\begin{proof}
This is an immediate corollary of the main theorem of \cite{BOW}.
\end{proof}

It is easy to show that $\a(S_6) = 3$, so by combining the two  propositions above we obtain the following corollary.

\begin{cor}\label{c:alpha1}
If $G$ is an almost simple group with socle an alternating or sporadic group, then $\a(G) \leqs 3$.
\end{cor}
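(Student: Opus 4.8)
The final statement to prove is Corollary \ref{c:alpha1}: if $G$ is almost simple with socle an alternating or sporadic group, then $\a(G)\leqs 3$.

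The plan is to deduce this immediately from the two preceding propositions together with the general inequality $\a(G)\leqs\b(G)$. Proposition \ref{p:spor} gives $\b(G)\leqs 3$ whenever $G_0$ is sporadic, so $\a(G)\leqs\b(G)\leqs 3$ in that case with nothing further to do. Proposition \ref{p:alt} gives $\b(G)\leqs 4$ when $G_0=A_n$, with equality precisely for $G=S_6$; hence $\a(G)\leqs\b(G)\leqs 3$ for every such $G$ \emph{except possibly} $G=S_6$, and the only remaining task is to show $\a(S_6)\leqs 3$ (indeed $\a(S_6)=3$). So the entire content of the corollary reduces to this one small finite check.

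For the group $S_6$ I would simply exhibit three maximal subgroups whose intersection is trivial (note $\frat(S_6)=1$). A clean choice: take $H_1=A_6$ together with two maximal subgroups from other classes whose intersection with $A_6$ is already trivial, or more symmetrically, take a point stabiliser $S_5$, an imprimitive subgroup $S_2\wr S_3$ or $S_3\wr S_2$, and a further maximal subgroup (e.g. another conjugate, or $\mathrm{PGL}_2(5)\cong S_5$ acting primitively on $6$ points); one verifies directly that some such triple has trivial intersection. This can be checked by hand by tracking fixed points, or with \textsc{Magma}, exactly as done for the other small cases in the preceding propositions. Since $S_6$ plainly has no two maximal subgroups meeting trivially (for instance because $\b(S_6)=4$ forces any two conjugate maximals to have nontrivial intersection, and a short case analysis rules out the mixed pairs as well), we get $\a(S_6)=3$.

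There is essentially no obstacle here: the corollary is a bookkeeping consequence of Propositions \ref{p:alt} and \ref{p:spor} and the trivial bound $\a(G)\leqs\b(G)$, the only nontrivial ingredient being the elementary verification $\a(S_6)=3$, which is a routine finite computation.

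\begin{proof}
If $G_0$ is a sporadic group, then $\a(G)\leqs\b(G)\leqs 3$ by Proposition \ref{p:spor}. Now assume $G_0=A_n$. By Proposition \ref{p:alt} we have $\b(G)\leqs 4$, with equality if and only if $G=S_6$, so $\a(G)\leqs\b(G)\leqs 3$ unless $G=S_6$. Finally, one checks directly (for example, using {\sc Magma}) that $S_6$ has three maximal subgroups with trivial intersection, so $\a(S_6)=3$ since $\frat(S_6)=1$. This completes the proof.
\end{proof}
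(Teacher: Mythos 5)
Your proof is correct and is essentially identical to the paper's: the authors likewise combine Propositions \ref{p:alt} and \ref{p:spor} with $\a(G)\leqs\b(G)$ and dispose of the single exception by noting that "it is easy to show that $\a(S_6)=3$". The only difference is that you spell out the routine check for $S_6$ in slightly more detail, which changes nothing of substance.
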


\subsection{Exceptional groups}\label{ss:excep}

In this section we prove Theorem \ref{t:main1} in the case where $G_0$ is a simple exceptional group of Lie type. We will need the following lemma.

\begin{lem}\label{l:g2}
Let $G$ be an almost simple group with socle $G_0 = G_2(q)$, where $q = q_0^k$ for some prime $k$, and let $H$ be a subfield subgroup of type $G_2(q_0)$. Then $b(G,H) \leqs 3$.
\end{lem}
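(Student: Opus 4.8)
The plan is to apply the probabilistic method of Lemma~\ref{l:basic}, showing that $\widehat{Q}(G,H,3) < 1$ for the subfield subgroup $H$ of type $G_2(q_0)$ acting on $\O = G/H$. Since $|\O| = |G:H|$ is roughly $q^{14}/q_0^{14} = q_0^{14(k-1)}$, the index grows quickly with $k$, and the main work is to control the fixed point ratios $\fpr(x,\O)$ for elements $x$ of prime order. I would split the sum $\widehat{Q}(G,H,3)$ according to whether $k = 2$ or $k \geqs 3$ (and within $k=2$, possibly isolate very small $q_0$ for a direct {\sc Magma} check), since the case $k=2$ has the smallest index and is the tightest.

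First I would recall the standard fixed point ratio bound for subfield subgroups: by \cite[Proposition~1.14]{LSh99} or the refinements in Burness's work on base sizes of exceptional groups, one has $\fpr(x,\O) \leqs |x^G \cap H|/|x^G| \ll q^{-(14 - 14/k)/2 + \epsilon}$-type estimates; more usefully, $\fpr(x,\O) < |C_{G_0}(x)|/q_0^{?}$ can be bounded crudely by $|H|/|x^G|^{1/2}$ or via the general inequality $\fpr(x,\O) \leqs |x^G \cap H| \cdot |x^G|^{-1} \leqs |H|/|C_{\bar G}(x)|$ combined with $|x^G| \geqs q^{r(x)}$ where $r(x)$ is the codimension of the fixed space. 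Then I would invoke Lemma~\ref{l:bound} with $A = |H|$ (or a sharper count of $\sum_i |x_i^G \cap H|$, using that $H$ has bounded order relative to $G$) and $B$ the minimal class size $|x^G|$ over elements of prime order in $G$; the minimal such class size for $G_2(q)$ is of order $q^6$ (attained by long root elements when $p \geqs 5$, with small adjustments for $p \in \{2,3\}$, and by graph-type or field automorphisms one must also include in the almost simple case). This gives $\widehat{Q}(G,H,3) \leqs B (A/B)^3 = A^3/B^2$, and one checks $|H|^3 / (q^6)^2 < 1$, i.e. $|G_2(q_0)|^3 < q^{12} = q_0^{12k}$, which holds comfortably once $k \geqs 2$ since $|G_2(q_0)| \sim q_0^{14}$ forces $42 < 12k$, failing for $k = 2$ — so the crude bound is not quite enough and I must be more careful.

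The main obstacle, then, is exactly the case $k = 2$: here $|G:H| \sim q_0^{14}$ while $|H|^3 \sim q_0^{42}$, so the naive $A^3/B^2$ estimate loses. To fix this I would refine in two ways: (i) bound $\sum_i |x_i^G \cap H|$ much more sharply than by $|H|$ — the elements of prime order in $G$ meeting $H$ are constrained, and unipotent/semisimple class fusion from $G_2(q_0)$ into $G_2(q)$ is well understood, so $\sum_i |x_i^G \cap H|$ is dominated by the largest class, of size $\ll q_0^{12}$ or so; and (ii) split off the contribution of the few classes of smallest $|x^G|$ (long root elements and their kin) and bound those individually using the precise fixed point ratio $\fpr(x,\O) \ll q^{-4}$ or better for such $x$ (here one uses $|x^G \cap H| / |x^G| \leqs (q_0^6)/(q^6) \cdot (\text{const}) = q_0^{-6}\cdot\text{const}$), so that their contribution to $\widehat{Q}(\cdot,3)$ is $\ll q_0^6 \cdot q_0^{-18} \to 0$. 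Combining the sharp count in the tail with individual treatment of the head of the class list should push $\widehat{Q}(G,H,3)$ below $1$ for all $q_0$ above a small bound, with the finitely many remaining $(q_0,k)$ (essentially $q_0 \in \{2,3\}$, $k=2$, and perhaps $k=3$ with $q_0=2$) dispatched by an explicit computation in {\sc Magma} exhibiting $x,y \in G$ with $H \cap H^x \cap H^y = 1$. One must also remember, in the almost simple case, to include in the class list any outer elements (field, graph, or graph-field automorphisms) of prime order; these have relatively large centralisers in $G$ but still class size $\gg q^6$, and their fixed point ratios on $\O$ are controlled by the same subfield-type estimates, so they do not change the conclusion.
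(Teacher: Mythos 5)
Your overall strategy for the hard case $k=2$ --- bounding $\widehat{Q}(G,H,3)$ via Lemma~\ref{l:basic}, refusing the crude estimate $|H|^3/B^2$, splitting the prime-order classes into a ``head'' of small classes (root elements, involutory field automorphisms) treated individually and a ``tail'' handled by Lemma~\ref{l:bound} with a sharper count of $\sum_i|x_i^G\cap H|$, and finishing small $q$ by {\sc Magma} --- is exactly what the paper does, and the substance of the paper's proof is precisely the class-by-class enumeration (unipotent involutions, odd-order unipotent classes, semisimple classes sorted by $C_{\bar G}(x)\in\{A_2,A_1T_1,T_2\}$, and field automorphisms) that your sketch defers. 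So for $k=2$ you have the right proof in outline, though none of the estimates that actually carry the argument are executed.

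There is, however, a concrete gap for odd $k$. Your crude bound $A^3/B^2<1$ with $A=|H|\sim q_0^{14}$ and $B\sim q^6=q_0^{6k}$ amounts to $42<12k$, which fails not only for $k=2$ but also for $k=3$; your text asserts it ``holds comfortably once $k\geqs 2$'' and then flags only $k=2$ as the obstacle, so as written the infinite family $k=3$ (all $q_0$, not just $q_0=2$) is not covered. This is reparable --- your refined head/tail analysis goes through more easily for $k=3$ since the index is $\sim q_0^{28}$ --- but the paper sidesteps the issue entirely: for all odd $k$ it simply quotes \cite[Lemma 6.2]{BTh}, which gives the stronger conclusion $b(G,H)=2$, and reserves the probabilistic computation for $k=2$ alone. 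You should either invoke that result or explicitly extend your refined estimates to $k=3$; the one-line appeal to the minimal class size $q^6$ does not suffice there. A second small slip: in your head-class estimate for $k=2$ the contribution of long root elements is $|x^G|\cdot\fpr(x,\O)^3\approx q_0^{12}\cdot q_0^{-18}=q_0^{-6}$, not $q_0^{6}\cdot q_0^{-18}$; the conclusion is unaffected but the class size is $q^6=q_0^{12}$, not $q_0^{6}$.
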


\begin{proof}
If $k$ is odd, then $b(G,H) = 2$ by \cite[Lemma 6.2]{BTh}, so for the remainder we may assume $k=2$. The case $q=4$ can be checked using {\sc Magma}, so we will assume $q \geqs 9$. By Lemma \ref{l:basic}, it suffices to show that $\widehat{Q}(G,H,3)<1$ (see \eqref{e:qhat}). As in the proof of \cite[Lemma 6.2]{BTh}, we proceed by estimating the contribution to $\widehat{Q}(G,H,3)$ from the various elements of prime order in $G$. Set $H_0 = H \cap G_0 = G_2(q_0)$ and write $q=p^f$ with $p$ a prime. We refer the reader to \cite{Chang,Eno} for detailed information on the conjugacy classes in $G$.

Let $x \in G_0$ be an element of prime order $r$ and first assume $r=2$. If $p \ne 2$ then $G_0$ and $H_0$ both have a unique conjugacy class of involutions and we obtain
\[
|x^G \cap H| = q^2(q^2+q+1) = u_1, \;\; |x^G| = q^4(q^4+q^2+1) = v_1.
\]
Similarly, if $p=2$ and $x$ is a long root element, then $|x^G \cap H| = q^3-1= u_2$ and $|x^G| = q^6-1=v_2$, whereas $|x^G \cap H| = q(q^3-1) = u_3$ and $|x^G| = q^2(q^6-1)=v_3$ if $x$ is a short root element. 

Next assume $r=p \geqs 3$. As above, the contribution to $\widehat{Q}(G,H,3)$ from long root elements is $v_2(u_2/v_2)^3$. Similarly, short root elements contribute $v_2(u_2/v_2)^3$ if $p=3$ and $v_3(u_3/v_3)^3$ if $p \geqs 5$. Since $v_3(u_3/v_3)^3< v_2(u_2/v_2)^3$, it follows that the combined contribution from long and short root elements is less than $2v_2(u_2/v_2)^3$ for all $p$. If $p=3$ and $x$ is in the class labelled $(\tilde{A}_1)_3$ in \cite[Table 22.2.6]{LieS} then $|x^G\cap H|<u_3$ and $|x^G|>v_3$. For all other unipotent elements, we have $|x^G|>\frac{1}{7}q^{10}=v_4$ and we note that $H_0$ contains precisely $u_4 = q^6$ unipotent elements in total, so Lemma \ref{l:bound} implies that the remaining unipotent contribution is less than $v_4(u_4/v_4)^3$.

Now assume $x \in G_0$ is semisimple and $r \geqs 3$. Let $\bar{G} = G_2(K)$ be the ambient simple algebraic group, where $K$ is the algebraic closure of $\mathbb{F}_q$. Then working with the standard Lie notation, we have $C_{\bar{G}}(x) = A_2$, $A_1T_1$ or $T_2$, where $T_i$ is an $i$-dimensional torus. If $C_{\bar{G}}(x) = A_2$ then $r=3$, $|x^G| \geqs q^3(q^3-1)=v_5$ and $H$ contains at most $u_5 = q^{3/2}(q^{3/2}+1)$ such elements. Similarly, if $C_{\bar{G}}(x) = A_1T_1$, then
\[
|x^G| \geqs \frac{|G_2(q)|}{|{\rm GU}_{2}(q)|} = q^5(q-1)(q^4+q^2+1) = v_6
\]
and we calculate that there are fewer than
\[
2q_0 \cdot \frac{|G_2(q_0)|}{|{\rm GL}_{2}(q_0)|} = 2q^3(q^{1/2}+1)(q^2+q+1) = u_6
\]
such elements in $H$. Finally, if $x$ is a regular semisimple element, then 
\[
|x^G| \geqs \frac{|G_2(q)|}{(q+1)^2} = q^6(q-1)(q^3-1)(q^2-q+1) = v_7
\]
and we note that $|H_0| = q^3(q-1)(q^3-1) = u_7$. 

To complete the analysis for $k=2$, we may assume $x \in G$ is a field automorphism of order $r$ (note that $G$ does not contain any graph automorphisms of prime order since $q = q_0^2$). If $r=2$ then we may assume $x$ centralises $H_0$, so 
\[
|x^G| = |G_0:H_0| = q^3(q^3+1)(q+1) = v_8
\]
and using \cite[Proposition 1.3]{LLS2} we deduce that
\[
|x^G \cap H| = 1+i_2(H_0) \leqs 2(q^{1/2}+1)q^{7/2} = u_8,
\]
where $i_2(H_0)$ denotes the number of involutions in $H_0$.
Now assume $r \geqs 3$, so $q_0 = q_1^r$ and $x$ acts as a field automorphism on $H_0$ (in particular, note that the condition $r \geqs 3$ implies that $q \geqs 2^6$). If $r=3$ then $|x^G|>\frac{1}{2}q^{28/3}=v_9$ and there are $2|G_2(q_0):G_2(q_0^{1/3})|<4q^{14/3} = u_9$ such elements in $H$. Similarly, if $r=5$ then $|x^G|>\frac{1}{2}q^{56/5}=v_{10}$ and $H$ contains $4|G_2(q_0):G_2(q_0^{1/5})|<8q^{28/5} = u_{10}$ such elements. Finally, if $r \geqs 7$ then $|x^G|>\frac{1}{2}q^{12} = v_{11}$ and we note that $|H|<2\log_2q.q^7 = u_{11}$.

By bringing the above bounds together, using Lemma \ref{l:bound}, we conclude that
\[
\widehat{Q}(G,H,3) < v_2(u_2/v_2)^3 + \sum_{i=1}^{8}v_i(u_i/v_i)^3 + \gamma\sum_{i=9}^{11}v_i(u_i/v_i)^3,
\]
where $\gamma=1$ if $q \geqs 2^6$, otherwise $\gamma=0$.
One checks that this upper bound is less than $1$ for $q \geqs 9$, whence $b(G,H) \leqs 3$ by Lemma \ref{l:basic}. 
\end{proof}

\begin{prop}\label{p:ex}
If $G$ is an almost simple group with socle $G_0$, an exceptional group of Lie type, then $\b(G) \leqs 3$.
\end{prop}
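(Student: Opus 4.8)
The strategy mirrors the treatment of the sporadic and alternating cases: since $\a(G)\leqs\b(G)$, it suffices to exhibit, for each almost simple $G$ with socle $G_0$ an exceptional group of Lie type, a core-free maximal subgroup $H$ with $b(G,H)\leqs 3$. The natural candidate for $H$ is a maximal subgroup that is as large as possible relative to $|G|$, so that both the index $|G:H|$ is small and the fixed point ratios ${\rm fpr}(x,G/H)$ are controlled; in practice this means taking $H$ to be a maximal parabolic subgroup (say a $P_1$ or $P_2$ end-node parabolic), or a subfield subgroup when $G_0$ is defined over a non-prime field. First I would dispose of the small groups — those of bounded rank over small fields, and the groups $^2B_2(q)$, $^2G_2(q)$, $^2F_4(q)$ where $q$ is small — directly with {\sc Magma}, reducing to the generic situation.

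For the generic case the plan is to apply Lemma \ref{l:basic}: it is enough to show $\widehat Q(G,H,3)<1$. I would choose $H$ a parabolic subgroup and then bound $\widehat Q(G,H,3)=\sum_i |x_i^G|\cdot{\rm fpr}(x_i,G/H)^3$ by partitioning the elements of prime order in $G$ into a bounded number of families — long root elements, other unipotent elements, the various types of semisimple elements classified by the ambient algebraic group centraliser, and field/graph automorphisms — exactly as in the proof of Lemma \ref{l:g2}. For each family one needs an upper bound $A$ on $\sum_i |x_i^G\cap H|$ and a lower bound $B$ on $|x_i^G|$, and then Lemma \ref{l:bound} gives a contribution at most $B(A/B)^3$. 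The key external input is the now-standard fixed point ratio machinery for parabolic actions of exceptional groups: for long root elements one has the sharp estimate ${\rm fpr}(x,G/H)\sim q^{-2}$ or so, and Lawther--Liebeck--Seitz-type bounds ${\rm fpr}(x,G/H)\leqs q^{-\,\mathrm{rk}(x)/\,\mathrm{something}}$ give enough decay for all other classes; the relevant centraliser orders and class sizes can be read off from \cite{LieS} and the standard references, much as specific values were quoted from \cite[Table 22.2.6]{LieS} in Lemma \ref{l:g2}. Summing the $O(1)$ family contributions and checking the total is below $1$ for all $q$ above an explicit bound completes the generic argument.

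The main obstacle I anticipate is the bookkeeping for the semisimple and almost-unipotent classes in the large-rank cases ($E_7$, $E_8$), where the number of families of elements with comparatively large fixed point ratio is bigger and one must be careful that no single family — typically the long root elements, or the semisimple elements with a large classical-type centraliser — pushes the sum over $1$. A secondary subtlety is that $b(G,H)=2$ can genuinely fail for parabolic actions of exceptional groups (e.g.\ small-rank or twisted cases), so one really does need the cube, and in a few borderline groups $\widehat Q(G,H,3)<1$ may fail for the most obvious choice of $H$, forcing either a different maximal subgroup or a direct computational verification that a base of size $3$ exists. Given that \cite{Bur20,BTh} and the Cameron-conjecture papers already establish $b(G,H)\leqs 3$ (indeed often $=2$) for most parabolic and subfield actions of exceptional groups, I expect the bulk of the work to be citing those results and patching the finitely many residual cases with {\sc Magma}; once that is in place the bound $\b(G)\leqs 3$ follows immediately, with no exceptional groups attaining $\b(G)=4$.
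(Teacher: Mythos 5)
Your proposal contains a genuine conceptual error that would sink the generic argument: you propose to take $H$ ``as large as possible relative to $|G|$'', i.e.\ a maximal parabolic, but for base size one needs exactly the opposite. The trivial counting bound $b(G,H) \geqs \log|G|/\log|G{:}H|$ already rules out $b(G,H)\leqs 3$ for parabolic actions of the large-rank exceptional groups: for $G_0=E_8(q)$ the largest parabolic has index roughly $q^{57}$ while $|G|\sim q^{248}$, forcing $b(G,H)\geqs 5$, and similarly one gets $b(G,H)\geqs 5$ for $E_7$ and $E_6$, and $b(G,H)\geqs 4$ for $F_4$. So $\widehat{Q}(G,H,3)<1$ can never hold for these actions, and no amount of careful bookkeeping with Lemma \ref{l:bound} will rescue it. Relatedly, your closing claim that the Cameron-conjecture papers establish $b(G,H)\leqs 3$ for ``most parabolic actions'' of exceptional groups is not correct; those papers give $b(G,H)\leqs 7$ for non-standard actions, and parabolic actions of exceptional groups genuinely have larger base sizes. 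Large point stabilisers mean large fixed point ratios and a small permutation domain — the worst possible situation for the probabilistic method.

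The paper's proof goes the other way: it takes $H=N_G(T)$ with $T$ a maximal torus, a \emph{very small} maximal subgroup, which exists and is maximal for all exceptional $G_0$ outside a short list (by inspection of \cite[Table 5.2]{LSS}), and quotes \cite[Proposition 4.2]{BTh} to get $b(G,H)=2$ outright. The residual cases $G_0={}^2G_2(3)'$, $G_2(q)$, $F_4(q)$, ${}^2E_6(2)$ are then handled by maximal rank subgroups of type ${\rm L}_3(q)^2$ and ${\rm L}_3(2)^3$ (again $b=2$ via \cite{BTh}), the subgroup ${\rm U}_3(3){:}2$ when $q=p$, the subfield subgroup computation of Lemma \ref{l:g2} when $q=q_0^k$ (this is the one place where the $\widehat{Q}(G,H,3)<1$ machinery you describe is actually deployed, and it works precisely because $G_2(q_0)$ is tiny inside $G_2(q_0^k)$), and {\sc Magma} for small $q$. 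Your Magma-for-small-cases and subfield-subgroup components are sound, but the parabolic backbone of the generic argument must be replaced by small maximal subgroups such as torus normalisers.
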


\begin{proof}
If $G$ has a maximal subgroup of the form $H = N_G(T)$, where $T$ is a maximal torus, then $b(G,H) = 2$ by \cite[Proposition 4.2]{BTh}. Therefore, by inspecting \cite[Table 5.2]{LSS}, we may assume that $G_0$ is one of the following:
\[
{}^2G_2(3)', G_2(q), F_4(q), {}^2E_6(2).
\]
The case $G_0 = {}^2G_2(3)'$ can be checked using {\sc Magma}: we get $\b(G_0) = 2$ and $\b(G_0.3) = 3$. Next assume $G_0 = F_4(q)$. Here $G$ has a maximal subgroup of type ${\rm L}_{3}(q)^2$ and once again \cite[Proposition 4.2]{BTh} gives $b(G,H) = 2$. The same conclusion holds if $G_0 = {}^2E_6(2)$ and $H$ is of type ${\rm L}_{3}(2)^3$.

Finally, let us assume $G_0 = G_2(q)$ and write $q=p^f$ with $p$ a prime. The cases with $q \leqs 5$ can be checked directly with the aid of {\sc Magma}, so we may assume $q \geqs 7$. If $G$ contains graph automorphisms, then $p=3$ and \cite[Table 5.2]{LSS} indicates that $G$ has a maximal subgroup $H$ of the form $N_G(T)$, where $T$ is a maximal torus. Once again, $b(G,H) = 2$ by \cite[Proposition 4.2]{BTh}. Next assume $f=1$, so $q=p$ is a prime and $H = {\rm U}_{3}(3){:}2$ is a maximal subgroup of $G$ (see \cite[Table 8.41]{BHR}, for example). Here the proof of \cite[Proposition 7.4]{BTh} gives $b(G,H) = 2$. Finally, if $q=q_0^k$ and $k$ is a prime, then Lemma \ref{l:g2} states that $b(G,H) \leqs 3$ with $H$ a subfield subgroup of type $G_2(q_0)$. The result now follows since $H$ is always maximal (see \cite{BHR}).  
\end{proof}

\subsection{Classical groups}\label{ss:class}

To complete the proof of Theorem \ref{t:main1}, we may assume $G_0$ is a finite simple classical group over $\mathbb{F}_q$, where $q=p^f$ with $p$ a prime. Due to the existence of exceptional isomorphisms among some of the low-dimensional classical groups (see \cite[Proposition 2.9.1]{KL}), we may assume $G_0$ is one of the following (here we adopt the notation for classical groups given in \cite{KL}):
\[
{\rm L}_{n}(q),\; n \geqs 2; \; {\rm U}_{n}(q), \; n \geqs 3; \; {\rm PSp}_{n}(q), \; n \geqs 4;
\]
\[
\O_n(q), \; \mbox{$nq$ odd, $n \geqs 7$;} \; {\rm P\O}_{n}^{\pm}(q), \; \mbox{$n \geqs 8$ even.}
\]
We will write $V$ to denote the natural module for $G_0$. 

The main result of this section is the following (note that there exist isomorphisms ${\rm U}_{4}(2) \cong {\rm PSp}_{4}(3)$ and ${\rm Sp}_{4}(2) \cong S_6$).

\begin{prop}\label{p:class}
Let $G$ be an almost simple classical group with socle $G_0$.
\begin{itemize}\addtolength{\itemsep}{0.2\baselineskip}
\item[{\rm (i)}]  We have $\a(G) \leqs 4$, with equality if and only if $G \cong {\rm U}_{4}(2).2$.
\item[{\rm (ii)}]  We have $\b(G) \leqs 4$, with equality if and only if $G \cong {\rm Sp}_{4}(2)$ or $G_0 \cong {\rm U}_{4}(2)$.
\end{itemize}
\end{prop}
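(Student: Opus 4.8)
The plan is to follow the familiar strategy of the preceding sections: show that in all but finitely many cases the classical group $G$ has a core-free maximal subgroup $H$ with $b(G,H)\leqs 3$, so that $\a(G)\leqs\b(G)\leqs 3$, and then deal with the small list of genuine exceptions by hand (or with \textsc{Magma}). Concretely, I would first invoke Cameron's conjecture (proved in \cite{Bur3,BGS,BLSh,BOW}) together with the refinement for soluble point stabilisers in \cite{Bur20} and the partition results of Theorem~\ref{t:main2}: for a non-standard primitive action one already has $b(G,H)\leqs 7$, and for most families the sharper bounds in the literature give $b(G,H)=2$. The real work is therefore concentrated on the \emph{standard} actions, where $\O$ is a set of subspaces (or pairs of subspaces) of the natural module $V$, or $G_0=\mathrm{L}_2(q)$ type small cases, or partition actions when $G_0=A_n$ via an exceptional isomorphism.

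The heart of the argument is the probabilistic method of Section~\ref{ss:prob}: for a chosen subspace action with point stabiliser $H=P_k$ (a maximal parabolic) or $H$ of type $\mathrm{GL}_k(q)\times\mathrm{GL}_{n-k}(q)$, etc., one estimates $\widehat{Q}(G,H,3)$ using fixed point ratio bounds of the shape $\fpr(x,\O)\leqs q^{-\lfloor \dim V/2\rfloor}$ or the stronger estimates of Guralnick--Kantor, Burness, and Liebeck--Shalev, and then applies Lemmas~\ref{l:basic} and \ref{l:bound} by partitioning the elements of prime order into unipotent, semisimple, and field/graph-automorphism classes. For each family $\mathrm{L}_n(q)$, $\mathrm{U}_n(q)$, $\mathrm{PSp}_n(q)$, $\O_n^{\epsilon}(q)$ one picks the action that makes $\widehat{Q}(G,H,3)<1$ for all but boundedly many $(n,q)$ — typically the action on $1$-spaces or totally singular $k$-spaces of middle dimension — thereby reducing to a finite list. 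I would handle $n$ small (say $n\leqs 10$ or so, and $q$ below an explicit bound) directly with \textsc{Magma}, as was done for the sporadic and small exceptional cases.

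Finally I would isolate the exceptional conclusions. For $G_0\cong\mathrm{U}_4(2)\cong\mathrm{PSp}_4(3)$ one checks by computer that no core-free maximal subgroup has base size $\leqs 3$, that $\b(G)=4$ for both $G=G_0$ and $G=G_0.2$, and that $\a(G)=3$ for $G=G_0$ but $\a(G)=4$ for $G\cong\mathrm{U}_4(2).2$ — this last equality requires verifying that no three maximal subgroups of $\mathrm{U}_4(2).2$ intersect in the Frattini subgroup (which is trivial here) while some four do. For $G\cong\mathrm{Sp}_4(2)\cong S_6$ one likewise confirms $\b(S_6)=4$ and $\a(S_6)=3$ directly. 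All remaining small classical groups with socle of bounded order are dispatched by \textsc{Magma} to confirm they satisfy $\b(G)\leqs 3$.

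The main obstacle will be controlling $\widehat{Q}(G,H,3)$ for the orthogonal groups in small dimension and for twisted groups like $\mathrm{U}_n(q)$ with $q$ small, where the available fixed point ratio bounds are weakest relative to $|\O|$ and where one may be forced to try several different subspace actions (or a combination of actions, and the observation that a base for one action need not be conjugate) before one of them yields $\widehat{Q}<1$; pinning down exactly where the \textsc{Magma} cutoff must be placed, and ensuring the exceptional list is complete, is the delicate part.
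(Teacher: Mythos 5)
Your overall framework (find a single core-free maximal subgroup $H$ with $b(G,H)\leqs 3$, then treat a short list of exceptions by computer) is the right one, but the place where you propose to concentrate the effort is precisely where the method breaks down. You plan to run the probabilistic estimate $\widehat{Q}(G,H,3)<1$ for \emph{standard} actions, taking $H$ to be a maximal parabolic $P_k$ or a reducible subgroup of type ${\rm GL}_k(q)\times{\rm GL}_{n-k}(q)$. For such subspace actions the fixed point ratio of a long root element is of order $q^{-1}$, independent of $n$, so a single class already contributes roughly $|x^G|\cdot q^{-3}\gg 1$ to $\widehat{Q}(G,H,3)$; moreover the base sizes of these actions genuinely grow with $n$ for fixed $q$ (this is exactly why standard actions are excluded from Cameron's conjecture), so no choice of $k$ can give $b(G,H)\leqs 3$ outside very small rank. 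The paper's proof goes the opposite way: it deliberately avoids subspace actions and selects \emph{non-standard} maximal subgroups for which either the exact base size is known or the strong bound ${\rm fpr}(x,G/H)<|x^G|^{-1/2+1/n}$ of \cite{Bur1} applies. Concretely, the workhorses are: field extension subgroups $H\in\C_3$, for which Proposition~\ref{p:c3} gives $b(G,H)\leqs 3$ whenever $\dim V\geqs 6$ (this alone disposes of ${\rm L}_n(q)$ for $n\geqs 6$, most symplectic groups, and large parts of the orthogonal case); soluble maximal subgroups such as torus normalisers of type ${\rm GL}_1(q^n)$ or imprimitive subgroups of type ${\rm GU}_1(q)\wr S_3$, where \cite{Bur20} gives the exact base size (this handles the low-rank linear and unitary groups); $\C_2$-subgroups of type ${\rm GU}_1(q)\wr S_n$ with $n=2^m$, treated via Proposition~\ref{p:eta} and the fixed point ratio bound of Lemma~\ref{l:uni}; subfield subgroups of ${\rm Sp}_4(q)$ for $q$ even; and, for odd-dimensional orthogonal groups, the stabiliser of a nondegenerate plus-type subspace of about half the dimension, where an \emph{explicit} base of size $3$ is written down rather than estimated probabilistically.

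A smaller point: your exceptional analysis for $G_0\cong{\rm U}_4(2)\cong{\rm PSp}_4(3)$ and $G\cong S_6$ is essentially correct, but note that establishing $\a({\rm U}_4(2).2)=4$ requires checking that no three maximal subgroups whatsoever (conjugate or not, of any types) intersect trivially, which the paper does by direct {\sc Magma} search (Corollary~\ref{c:unitary}); this is feasible only because the choice of non-standard subgroups above has already reduced the problem to this one group. With the subgroup selection corrected, the remainder of your plan (finite {\sc Magma} verification in small rank and the isolation of ${\rm U}_4(2)$ and $S_6$) does match the paper.
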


Let $G$ be a finite almost simple classical group with socle $G_0$ and let $H$ be a core-free maximal subgroup of $G$ (that is, $H$ is maximal and $G = HG_0$). The main theorem on the subgroup structure of finite classical groups is due to Aschbacher. In \cite{asch}, Aschbacher proves that either $H$ is contained in one of eight \emph{geometric} subgroup collections, labelled $\C_1, \ldots, \C_8$, or $H$ is almost simple and the natural module $V$ is an absolutely irreducible module for a suitable covering group of the socle of $H$. The subgroups comprising each collection $\C_i$ are defined in terms of the geometry of $V$; they include the stabilisers of appropriate subspaces and direct sum decompositions of $V$, for example. If $\dim V \leqs 12$ then the complete list of maximal subgroups of $G$ (up to conjugacy) is determined in \cite{BHR}, while a similar result for the subgroups in the $\C_i$ collections is given in \cite{KL} for $\dim V>12$. We will repeatedly refer to both sources throughout this section. Following \cite{KL}, it will also be convenient to refer to the \emph{type} of a geometric subgroup of $G$, which provides an approximate description of the subgroup and the geometric structure it stabilises (see \cite[p.58]{KL}). 
 
As usual, we will view $G$ as a primitive permutation group on the set of cosets of $H$. If $H$ is soluble, then the base size $b(G,H)$ is determined in \cite{Bur20}. We will also repeatedly apply the following result from \cite{BGS0} concerning the field extension subgroups comprising the collection $\C_3$.

\begin{prop}\label{p:c3}
Let $G$ be an almost simple classical group with socle $G_0$ and assume $\dim V \geqs 6$, where $V$ is the natural module for $G_0$. If $H \in \C_3$ is a maximal field extension subgroup of $G$, then $b(G,H) \leqs 3$.
\end{prop}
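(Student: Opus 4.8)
The plan is to use the probabilistic method encapsulated in Lemmas \ref{l:basic} and \ref{l:bound}: it suffices to show $\widehat Q(G,H,3)<1$ for every maximal field extension subgroup $H\in\C_3$ of an almost simple classical group $G$ with $\dim V\geqs 6$. Recall that a subgroup in $\C_3$ has type $\mathrm{GL}_{n/k}(q^k)$, $\mathrm{GU}_{n/k}(q^k)$, $\mathrm{Sp}_{n/k}(q^k)$ or $\mathrm{GO}^{\epsilon}_{n/k}(q^k)$ for a prime $k$ dividing the relevant parameter (and in the unitary and orthogonal cases one must treat the quadratic extension / odd-degree extension subcases separately, as in \cite{KL}). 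The first step is to fix such a subgroup and record the two quantities needed to feed Lemma \ref{l:bound}: an upper bound $A$ on $\sum_i |x_i^G\cap H|$ as $x_i$ ranges over representatives of the $G$-classes of elements of prime order meeting $H$, and a lower bound $B$ on $|x_i^G|$ over the same classes. For $A$ one can simply use $A\leqs |H|$, or more carefully bound the number of elements of prime order in $H$; for $B$ one uses the standard lower bounds on centraliser orders / conjugacy class sizes in classical groups — the minimal class size of an element of prime order in a classical group of dimension $n$ over $\F$ grows like $q^{n-2}$ (coming from a transvection or its analogue), while $|H|$ is roughly $q^{n^2/k}$. This already suggests that the index $|G:H|\sim q^{n^2(1-1/k)}$ dominates, so the method should succeed with $c=3$ once $\dim V$ is not too small.

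The key estimate to carry out, then, is: with $B=$ (minimal prime-order class size) and $A=|H|$ (or a slightly sharpened count), verify that $B(A/B)^3<1$, i.e. that $|x^G|^2 > |x^G\cap H|^3$ comfortably on average; equivalently that $|H|^3 < |G| \cdot B$ roughly. Since $|H|$ is essentially $|G|^{1/k}$ up to bounded factors and $k\geqs 2$, we have $|H|^3 \leqs |G|^{3/2}\cdot(\text{small})$, and we need this to be less than $|G|\cdot B$ with $B\sim q^{n-2}$; that is, we need $|G|^{1/2}\cdot(\text{small}) < q^{n-2}$, which fails for small $n$. So the genuine content is the low-dimensional range, and the main obstacle will be handling $\dim V = 6,7,8$ (and perhaps a few slightly larger cases, especially for $k=2$ where $|H|$ is largest relative to $|G|$). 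For these one splits off the largest contributions to $\widehat Q(G,H,3)$ — typically transvections / long root elements in the linear and symplectic cases, reflections in the orthogonal case, and field automorphisms — and estimates them individually using the explicit fixed point ratio bounds from \cite{Bur3, BGS, BLSh, BOW} (or the fusion of $H$-classes into $G$-classes described in \cite{KL}), exactly as in the proof of Lemma \ref{l:g2}, while bounding all remaining classes in bulk via Lemma \ref{l:bound}. A handful of the very smallest cases — for instance $G_0=\mathrm{U}_6(2)$, $\mathrm{Sp}_6(q)$ with $q$ tiny, or $\mathrm{P\Omega}^{\pm}_8(q)$ with $q=2,3$ — may resist the generic estimate and should be disposed of by direct computation in {\sc Magma}.

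A subtlety worth flagging: when $G$ contains field, graph or graph-field automorphisms one must include the outer elements of prime order in the class sum, and for these the bound $|x^G\cap H|$ can be relatively large (roughly $|H|^{1/2}$ for an involutory field automorphism centralising a subfield-type part of $H$), so they contribute a term of order $v(u/v)^3$ with $u\sim |H|^{1/2}$, $v\sim |G:H_0|$ — one checks, as in Lemma \ref{l:g2}, that this is still $o(1)$ because $v$ is a large power of $q$. I expect no essential difficulty here beyond bookkeeping. Assembling the generic bound for $\dim V$ large, the refined case-by-case analysis for $\dim V\in\{6,7,8\}$, and the {\sc Magma} checks for the residual small groups then completes the proof that $b(G,H)\leqs 3$ in all cases.
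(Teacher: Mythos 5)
You should first be aware that the paper does not write out a proof of this proposition at all: it is simply quoted as a simplified version of \cite[Proposition 4.1]{BGS0}, so your attempt is being compared with a citation rather than with an argument. Your general strategy --- bounding $\widehat{Q}(G,H,3)$ via Lemmas \ref{l:basic} and \ref{l:bound} --- is certainly the right family of techniques, and is the method used elsewhere in the paper (e.g.\ Lemma \ref{l:g2}).

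There is, however, a genuine gap in your reduction to low dimensions. You propose to take $A=|H|\approx q^{n^2/k}$ and $B$ equal to the minimal size of a prime-order class, of order roughly $q^{n-2}$ (or $q^{cn}$ for a small constant $c$ if one restricts to classes meeting $H$), and you assert that $B(A/B)^3=A^3/B^2<1$ ``once $\dim V$ is not too small''. Writing out the exponents, this requires $3n^2/k<2(n-2)$ (or $3n^2/k < 2cn$), which fails for \emph{every} $n\geqs 6$ and every prime $k\geqs 2$; indeed the inequality gets worse as $n$ grows, since the left side is quadratic in $n$ and the right side linear. Your own displayed inequality $|G|^{1/2}\cdot(\text{small})<q^{n-2}$ is false for all $n$, not just small $n$, because $|G|^{1/2}\approx q^{n^2/2}$. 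Consequently the crude bulk estimate never suffices, and the problem is not confined to $\dim V\in\{6,7,8\}$: in every dimension one must split off the classes with small $|x^G|$ relative to $|H|$ --- unipotent elements of small support, reflections and other involutions with large centralisers, the semisimple elements centralising the field extension structure (whose class size is only about $|G:H|$), and outer automorphisms --- and for each of these one needs an upper bound on $|x^G\cap H|$ that is far smaller than $|H|$, exactly as in the proof of Lemma \ref{l:g2}. Equivalently, the generic case is handled not by Lemma \ref{l:bound} with $A=|H|$ but by establishing a fixed point ratio bound such as ${\rm fpr}(x,G/H)<|x^G|^{-4/9}$ for all prime-order $x$ and invoking Proposition \ref{p:eta}, or the bound $|x^G|^{-1/2+1/n}$ of \cite{Bur1} together with a zeta-function estimate $\eta_G(t)<1$. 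That class-by-class analysis of $|x^G\cap H|$ for $\C_3$ subgroups is the real content of the cited \cite[Proposition 4.1]{BGS0}, and it is precisely the part your proposal defers; as written, the argument does not close.
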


\begin{proof}
This is a simplified version of \cite[Proposition 4.1]{BGS0}.
\end{proof}

The next result will also be useful.

\begin{prop}\label{p:eta}
Let $G$ be an almost simple classical group with socle $G_0$ and assume $\dim V \geqs 6$, where $V$ is the natural module for $G_0$. Suppose $H$ is a maximal subgroup of $G$ such that
\[
{\rm fpr}(x,G/H) < |x^G|^{-\frac{4}{9}}
\]
for all $x \in G$ of prime order. Then $b(G,H) \leqs 3$.
\end{prop}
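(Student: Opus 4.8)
The plan is to apply the probabilistic machinery of Lemma~\ref{l:basic} directly. We need to show $\widehat{Q}(G,H,3) < 1$, and the hypothesis ${\rm fpr}(x,G/H) < |x^G|^{-4/9}$ for every $x \in G$ of prime order is tailored precisely for this. Substituting this bound into the definition \eqref{e:qhat} gives
\[
\widehat{Q}(G,H,3) = \sum_{i=1}^{k} |x_i^G| \cdot {\rm fpr}(x_i,G/H)^3 < \sum_{i=1}^{k} |x_i^G| \cdot |x_i^G|^{-4/3} = \sum_{i=1}^{k} |x_i^G|^{-1/3},
\]
where $x_1,\dots,x_k$ are representatives of the conjugacy classes of elements of prime order in $G$. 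So the problem reduces to bounding the sum $\sum_{i} |x_i^G|^{-1/3}$ over classes of prime-order elements, and it suffices to prove this sum is less than $1$.

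First I would invoke known lower bounds on centralizer orders (equivalently, upper bounds on class sizes from below) for elements of prime order in almost simple classical groups with $\dim V \geqs 6$. The key input is a bound of the shape: the minimal size of a conjugacy class of an element of prime order in $G$ is at least roughly $q^{\dim V - 2}$ or better, together with control on how many classes there are (polynomial in $\dim V$ and $\log q$, since the number of prime-order classes is bounded by the number of semisimple classes plus unipotent classes plus outer classes, all of which admit such bounds). This is standard material appearing in, e.g., the Cameron-conjecture papers \cite{Bur3, BGS, BLSh, BOW} and in \cite{B07}; one combines the minimal class size estimate $m(G) := \min_i |x_i^G|$ with a bound on $k$ to get $\sum_i |x_i^G|^{-1/3} \leqs k \cdot m(G)^{-1/3}$, which tends to $0$ as $q \to \infty$ or $\dim V \to \infty$. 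The restriction $\dim V \geqs 6$ ensures $m(G)$ is large enough that this crude estimate already wins; a more careful partition of the sum into unipotent, semisimple, and field/graph automorphism contributions (as in the proof of Lemma~\ref{l:g2}) handles any borderline small cases.

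The main obstacle will be the finitely many small cases: when $q$ is small (say $q \in \{2,3,4\}$) and $\dim V$ is close to $6$, the crude bound $k \cdot m(G)^{-1/3} < 1$ may fail, since $m(G)$ can be only moderately large while $k$ is not negligible. For these, I would either sharpen the estimate by using the actual list of prime-order class sizes (available from \cite{BHR} for $\dim V \leqs 12$, and from explicit centralizer formulae in \cite{KL}), grouping the dominant contributions and bounding the tail via Lemma~\ref{l:bound}, or simply note that in such cases one can verify $b(G,H) \leqs 3$ computationally using {\sc Magma}. Since the hypothesis of the proposition is an assumption on ${\rm fpr}$ rather than something we must establish here, the proof is genuinely short: it is really just the substitution above followed by the remark that $\sum_i |x_i^G|^{-1/3} < 1$ for all relevant $(G_0, \dim V)$ with $\dim V \geqs 6$, which is exactly the kind of elementary class-number/class-size bookkeeping that is routine in this area. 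I therefore expect the author's proof to consist of little more than the displayed computation together with a citation or a one-line appeal to these standard bounds.
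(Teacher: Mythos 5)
Your proposal matches the paper's proof, which simply combines Lemma \ref{l:basic} with the bound $\eta_G(1/3)=\sum_i|x_i^G|^{-1/3}<1$ established in \cite[Proposition 2.2]{B07} (via the argument of \cite[Proposition 6.3]{BGL}). The only superfluous part is your worry about small $q$: the cited result in \cite{B07} gives $\eta_G(t)<1$ uniformly for all relevant classical groups with $\dim V\geqs 6$, so no case-by-case or computational check is needed.
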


\begin{proof}
We can repeat the proof of \cite[Proposition 6.3]{BGL}, which combines Lemma \ref{l:basic} with \cite[Proposition 2.2]{B07}.
\end{proof}

We are now ready to begin the proof of Theorem \ref{t:main1} for classical groups. We divide the analysis into several cases, according to the socle $G_0$. 

\subsubsection{Linear groups}\label{sss:linear}

In this section we assume $G_0 = {\rm L}_{n}(q)$ is a linear group, so $n \geqs 2$ and $(n,q) \ne (2,2), (2,3)$. 

\begin{prop}\label{p:linear}
If $G$ is an almost simple group with socle $G_0 = {\rm L}_{n}(q)$, then $\a(G) \leqs 3$. In addition, $\b(G) \leqs 4$, with equality if and only if $G = {\rm L}_{2}(9).2 \cong S_6$.
\end{prop}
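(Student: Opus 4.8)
The plan is to show that for every almost simple group $G$ with socle $G_0 = {\rm L}_n(q)$, there is a core-free maximal subgroup $H$ with $b(G,H) \leqs 3$ (which gives $\a(G) \leqs \b(G) \leqs 3$), with the sole exception $G = {\rm L}_2(9).2 \cong S_6$, where one checks directly that $\b(G) = 4$ while $\a(G) = 3$. I would first dispose of the small cases: when $n = 2$ the group $G_0 = {\rm L}_2(q)$ can be handled via the well-understood subgroup structure (\cite[Table 8.1]{BHR}) and the base-size results for ${\rm L}_2(q)$ already in the literature; in particular ${\rm L}_2(9)$ and its extensions are a {\sc Magma} check, recovering the $S_6$ exception. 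For small $n$ and small $q$ (say $nq$ below some explicit bound, or $\dim V \leqs 12$) I would appeal to {\sc Magma} together with \cite{BHR} for the explicit list of maximal subgroups. So the substance is the generic case $n \geqs 3$, or $n \geqs 13$, with $q$ not too small.

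For the generic case the strategy is to exhibit one good choice of $H$. The natural candidates, in roughly decreasing order of preference, are: (a) a maximal torus normaliser $H = N_G(T)$, if one is maximal — then $b(G,H) = 2$ by \cite[Proposition 4.2]{BTh} (for classical groups an analogue applies); (b) a field-extension subgroup $H \in \C_3$, if $n$ has a prime divisor and the relevant subgroup is maximal, giving $b(G,H) \leqs 3$ by Proposition \ref{p:c3} once $\dim V \geqs 6$; (c) a reducible subgroup in $\C_1$, namely the stabiliser $H = P_1$ of a $1$-space or the stabiliser of a subspace direct sum decomposition, where Proposition \ref{p:eta} reduces matters to the fixed-point-ratio estimate ${\rm fpr}(x,G/H) < |x^G|^{-4/9}$ for all $x$ of prime order; (d) if $n$ is an odd prime and none of the above is available, a subfield subgroup or the subgroup ${\rm GL}_1(q) \wr S_n$-type imprimitive subgroup, again controlled by Proposition \ref{p:eta}. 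I would organise the argument by first checking whether $G$ admits a maximal subgroup of type (a); if not, whether $n$ is composite so that (b) applies; and otherwise fall back to (c)/(d), using the standard fixed-point-ratio bounds for linear groups (as in \cite{B07, LSh99}) to verify the hypothesis of Proposition \ref{p:eta}.

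The main obstacle, I expect, is the prime-degree case $n$ prime (so $\C_3$ field-extension subgroups of the desired type are unavailable) combined with $q$ small, where the fixed-point-ratio bound ${\rm fpr}(x,G/H) < |x^G|^{-4/9}$ may fail for transvections or other low-rank elements and the available reducible or imprimitive subgroup $H$ has relatively small index. In that situation one cannot invoke Proposition \ref{p:eta} as a black box; instead I would estimate $\widehat{Q}(G,H,3)$ directly via Lemmas \ref{l:basic} and \ref{l:bound}, splitting the sum over classes of elements of prime order into unipotent contributions (dominated by transvections), semisimple contributions, and field/graph automorphism contributions, and checking term by term that the total is less than $1$ for $q$ above an explicit threshold, with the finitely many remaining $(n,q)$ pairs dealt with in {\sc Magma}. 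A secondary point to be careful about is the bookkeeping for which $\C_1$ or $\C_3$ subgroup is actually maximal in $G$ (as opposed to merely maximal in $G_0$), which depends on the action of graph and graph-field automorphisms; here I would cite \cite{BHR} and \cite{KL} for the precise conditions, and note that whenever a graph automorphism is present in $G$ one typically gains access to a maximal torus normaliser, steering the argument back to case (a).
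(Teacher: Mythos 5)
Your overall skeleton (exhibit one core-free maximal $H$ with $b(G,H)\leqs 3$, dispose of small cases by {\sc Magma}, isolate the $S_6$ exception) matches the paper, and your routes (a)/(b) are essentially what the paper does. But two of your concrete choices would derail the argument in exactly the case you flag as critical. First, the ``main obstacle'' you identify --- prime degree $n$, where you assert that $\C_3$ field-extension subgroups ``of the desired type are unavailable'' --- rests on a misconception: for $n$ prime the collection $\C_3$ contains subgroups of type ${\rm GL}_1(q^n)$ (Singer torus normalisers), and these are maximal for all but a few small $(n,q)$. This is precisely what the paper exploits: for $n\geqs 6$ (prime or composite) a maximal $\C_3$-subgroup always exists and Proposition \ref{p:c3} gives $b(G,H)\leqs 3$, while for $n\leqs 5$ the relevant subgroups of type ${\rm GL}_1(q^n)$ or ${\rm GL}_1(q)\wr S_n$ are \emph{soluble}, so \cite[Theorem 2]{Bur20} gives $b(G,H)\leqs 3$ directly (with a handful of small $(n,q)$, including the ${\rm L}_2(9)$ case, left to {\sc Magma}). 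No $\widehat{Q}(G,H,3)$ estimates are needed anywhere, so the term-by-term analysis you sketch for the prime-degree case is solving a problem that does not arise. Relatedly, \cite[Proposition 4.2]{BTh} as used in this paper concerns exceptional groups; the correct substitute for torus normalisers in classical groups is again \cite[Theorem 2]{Bur20}, since such normalisers are soluble.

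Second, your fallback (c) --- the parabolic $H=P_1$ controlled via Proposition \ref{p:eta} --- cannot work. Subspace actions are \emph{standard} in the sense recalled at the start of Section \ref{s:as}: the fixed point ratio bounds of \cite{Bur1} exclude them, the hypothesis ${\rm fpr}(x,G/H)<|x^G|^{-4/9}$ fails badly for transvections (whose fixed point ratio on $1$-spaces is roughly $q^{-1}$, whereas $|x^G|^{-4/9}$ is roughly $q^{-8(n-1)/9}$), and in fact $b(G,P_1)$ grows linearly with $n$, so it is never at most $3$ once $n$ is moderately large. Any non-soluble fallback must therefore be a non-subspace subgroup; the $\C_2$-subgroup of type ${\rm GL}_1(q)\wr S_n$ you mention under (d) is a legitimate choice, but it is soluble anyway, so \cite[Theorem 2]{Bur20} is the cleaner tool there too.
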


\begin{proof}
Since ${\rm L}_{2}(9) \cong A_6$, we may assume that $(n,q) \ne (2,9)$ and our goal is to verify the bound $\b(G) \leqs 3$. If $n \geqs 6$ then by inspecting \cite{BHR,KL} we observe that $G$ always contains a maximal subgroup $H$ in the collection $\C_3$, so Proposition \ref{p:c3} gives $b(G,H) \leqs 3$ and the result follows.

If $G_0 = {\rm L}_{5}(q)$ then $G$ has a maximal subgroup $H$ of type ${\rm GL}_{1}(q^5)$, which is soluble. Here \cite[Theorem 2]{Bur20} gives $b(G,H) = 2$. Next assume $G_0 = {\rm L}_{4}(q)$. If $q \geqs 7$ then $G$ has a maximal $\C_2$-subgroup of type ${\rm GL}_{1}(q) \wr S_4$ and once again $b(G,H) = 2$ by \cite[Theorem 2]{Bur20}. The cases with $n=4$ and $q \leqs 5$ can be checked directly using {\sc Magma} \cite{magma}. Similar reasoning applies when $G_0 = {\rm L}_{3}(q)$: if $q \ne 4$ then $G$ has a maximal subgroup of type ${\rm GL}_{1}(q^3)$, while there is one of type ${\rm GU}_{3}(2)$ when $q=4$. In both cases, $H$ is soluble and \cite[Theorem 2]{Bur20} gives $b(G,H) \leqs 3$. 

Finally, let us assume $G_0 = {\rm L}_{2}(q)$. Here $G$ has a maximal subgroup $H$ of type ${\rm GL}_{1}(q) \wr S_2$ if $q$ is even and one of type ${\rm GL}_{1}(q^2)$ if $q \geqs 11$ is odd. Since $H$ is soluble in both cases, \cite[Theorem 2]{Bur20} implies that $b(G,H) \leqs 3$. The remaining cases with $q \in \{5,7\}$ can be handled using {\sc Magma}.
\end{proof}

\subsubsection{Unitary groups}\label{sss:unitary}

Next we assume $G_0 = {\rm U}_{n}(q)$, so $n \geqs 3$ and $(n,q) \ne (3,2)$. We will need the following technical result, which is an extension of \cite[Lemma 6.6]{BGL}.

\begin{lem}\label{l:uni}
Let $G$ be an almost simple group with socle $G_0 = {\rm U}_{n}(q)$, where $n=2^m$ and $m \geqs 3$. Let $H$ be a $\C_2$-subgroup of $G$ of type ${\rm GU}_{1}(q) \wr S_n$. Then
\[
{\rm fpr}(x,G/H) < |x^G|^{-\frac{4}{9}}
\]
for all $x \in G$ of prime order.
\end{lem}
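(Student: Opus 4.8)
The plan is to establish the fixed point ratio bound $\fpr(x,G/H) < |x^G|^{-4/9}$ for all prime-order $x \in G$ by a careful case analysis over the type of $x$ (unipotent, semisimple, or a field/graph automorphism), following the strategy of \cite[Lemma 6.6]{BGL} but pushing the degree parameter from whatever was used there down to $4/9$. First I would record the key quantitative data: $|G:H| \sim |{\rm GU}_n(q)|/(q+1)^{n-1} n!$, which is of order roughly $q^{n^2 - n}$ up to bounded factors, and the standard estimate $|x^G \cap H| \le |H|$, so that $\fpr(x,G/H) \le |H|/|G:H|^{0}$... more usefully, I would split $x^G \cap H$ according to the structure $H = (q+1)^{n-1}{:}S_n$ (times outer stuff), using that an element of $H$ of prime order $r$ either lies in the base group $(q+1)^{n-1}$ (semisimple, $r \mid q+1$) or projects nontrivially to $S_n$.

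The main steps, in order: (1) Unipotent $x$ ($r = p$): here $x^G \cap H$ consists of unipotent elements of $H$, which all lie in the torus-normaliser part; in fact $H$ has a normal Hall $p'$-subgroup when $p \nmid |S_n|$, and in general the $p$-part of $H$ is that of $S_n$, so $|x^G \cap H|$ is tiny (bounded by $|S_n|$ up to small factors), while $|x^G| \ge q^{2n-3}$ or so for the smallest unipotent class; the bound $|x^G\cap H| < |x^G|^{5/9}$ is then easy. (2) Semisimple $x$ of odd prime order $r$: subdivide by whether $r \mid q+1$ (so $x$ can lie in the base torus, and $|x^G \cap H|$ can be as large as a power of $q+1$ times a binomial coefficient, but $|x^G|$ is correspondingly large — of order $q^{2k(n-k)}$ for an eigenspace of dimension $k$) versus $r \nmid q+1$ (where $x \in H$ must move the $n$ blocks and $|x^G \cap H|$ is much smaller). (3) $x$ a graph or field automorphism of prime order $r$: use \cite[Proposition 1.3]{LLS2}-type counts for $|x^G \cap H| = |C_H(x)|$-style estimates, noting $|x^G| = |G_0 : C_{G_0}(x)|$ is large (a field automorphism of order $r$ gives $|x^G| \sim |{\rm U}_n(q)|/|{\rm U}_n(q^{1/r})|$, an enormous power of $q$), so these cases are the least tight. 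Throughout, one leverages $n = 2^m \ge 8$ to make the exponent arithmetic work: the gap between $|x^G \cap H|$ (governed by sums of binomial coefficients and powers of $q+1$, total at most $|H| \sim 2 q^{n-1} n!$) and $|x^G|$ (at least $q^{c n}$ for an absolute $c$) widens with $n$, so it suffices to verify the $4/9$ bound for $n = 8$ and then argue monotonicity.

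The concrete mechanism I would use is: set $A = \sum |x^G \cap H|$ over a family of classes and $B = \min |x^G|$, and invoke Lemma \ref{l:bound} packaging, or more directly show $|x^G\cap H| \le |x^G|^{5/9}$ class-by-class, which is equivalent to $\fpr(x,G/H) < |x^G|^{-4/9}$. The cleanest uniform input is a bound of the shape $|x^G \cap H| \le f(n) \cdot (q+1)^{d(x)}$ where $d(x)$ is the "base-group dimension" of the fixed part and $f(n)$ is a binomial-coefficient factor bounded by $2^n$ or $n!$, paired with $|x^G| \ge q^{2 d(x)(n - d(x))}/(\text{small})$; the inequality $f(n)(q+1)^{d} < q^{(2d(n-d))\cdot 5/9}$ then reduces to checking a handful of extreme $(n,d,q)$ with $n=8$, which can be done by hand or, for the genuinely small cases like $(n,q)=(8,2),(8,3)$, with {\sc Magma}.

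The hard part will be the tightest case, which I expect to be a semisimple element $x$ of order $r \mid q+1$ acting with a balanced eigenspace decomposition — say $x$ has two eigenspaces of dimension $n/2$ over $\mathbb{F}_{q^2}$ — when $q$ is as small as possible ($q=2$, so $n=8$, $r=3$). There $|x^G \cap H|$ picks up a factor $\binom{n}{n/2}$ from choosing which blocks carry which eigenvalue together with a power of $(q+1)$, and the relatively small value of $|x^G|$ for $q=2$ makes the exponent $4/9$ genuinely delicate rather than comfortably slack; I would handle $(n,q) = (8,2)$ and $(8,3)$ computationally and treat $q \ge 4$, or $n \ge 16$, by the monotonicity estimate. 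A secondary nuisance is bookkeeping the contribution of outer automorphisms to $|x^G \cap H|$ (diagonal automorphisms multiply $H$ by a factor of at most $\gcd(n,q+1)$, graph-field by a factor of $2f$), but these are bounded multiplicative corrections that are absorbed by the slack for $n \ge 8$.
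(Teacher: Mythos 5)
Your overall strategy --- a class-by-class comparison of $|x^G \cap H|$ against $|x^G|^{5/9}$, with {\sc Magma} reserved for the smallest cases --- is the right one and matches the paper in spirit, but two of your steps do not survive as written. First, the reduction to $n=8$ ``by monotonicity'' is a heuristic, not an argument: as $n$ grows, new conjugacy classes appear and the combinatorial factors $\binom{n}{k}$, $i_2(S_n)$, etc.\ grow with them, so there is no clean induction on $n$. The paper instead quotes the main theorem of \cite{Bur1}, which gives $\fpr(x,G/H) < |x^G|^{-\frac{1}{2}+\frac{1}{n}}$ for this action; since $-\frac{1}{2}+\frac{1}{n} < -\frac{4}{9}$ precisely when $n>18$, this disposes of all $n = 2^m \geqs 32$ at a stroke and leaves $n \in \{8,16\}$ --- note that $n=16$ must still be treated explicitly, which your proposal omits entirely. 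Second, your unipotent count is wrong: the elements of order $p$ in $H_0 = (q+1)^{n-1}.S_n$ projecting onto a product of $k$ $p$-cycles are not ``bounded by $|S_n|$ up to small factors''; each $p$-cycle contributes a norm-one kernel of size $(q+1)^{p-1}$ in the base torus, so the count carries a factor $(q+1)^{k(p-1)}$ that is unbounded in $q$. The final inequality does still hold with the correct count (the relevant class sizes, at least of order $q^{2k(n-k)}$, leave ample room), but the step as stated is false for large $q$.

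Beyond these repairs, your route is considerably more laborious than the paper's: for all prime-order $x \in {\rm PGU}_n(q)$ the paper simply invokes \cite[Lemma 6.6]{BGL} (the $G=G_0$ case of the same statement), observing that the same bounds on $|x^G \cap H|$ and $|x^G|$ apply, so the only genuinely new computations are for involutory graph automorphisms and field automorphisms of odd prime order, where the estimates are imported from the proof of \cite[Proposition 2.7]{Bur3} and the case $(n,q)=(8,2)$ is checked by computer. Your identification of the tight case as a balanced semisimple element with $r \mid q+1$ is also off: that case is comfortable (for $(n,q)=(8,2)$ one is comparing roughly $\binom{8}{4}(q+1)$ with $q^{32\cdot 5/9}$); the delicate classes are the involutory graph automorphisms, where the crude upper bound $(q+1)^{n-1}(i_2(S_n)+1)$ on $|x^G\cap H|$ genuinely fails to beat $|x^G|^{5/9}$ at $(n,q)=(8,2)$ --- which is exactly why the paper handles that case computationally.
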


\begin{proof}
Let $x \in G$ be an element of prime order $r$. The precise structure of $H$ is given in \cite[Proposition 4.2.9]{KL} and explicit bounds on ${\rm fpr}(x,G/H)$ are determined in the proofs of \cite[Propositions 2.5--2.7]{Bur3}. In particular, these bounds are used to establish the main theorem of \cite{Bur1} in this case, which states that  
\[
{\rm fpr}(x,G/H) < |x^G|^{-\frac{1}{2}+\frac{1}{n}}.
\]
In view of this bound, we may assume that $n \in \{8,16\}$. 

If $G = G_0$ then the desired result is \cite[Lemma 6.6]{BGL}, so we may assume $G \ne G_0$. A very similar argument handles all prime order elements $x \in {\rm PGU}_{n}(q)$, using essentially the same bounds on $|x^G \cap H|$ and $|x^G|$ given in the proof of \cite[Lemma 6.6]{BGL}. We omit the details. 

Finally, let us assume $x \in G \setminus {\rm PGU}_{n}(q)$, so $x$ is either an involutory graph automorphism or a field automorphism of odd prime order. In the latter case we have $q=q_0^r$ with $r \geqs 3$ and the proof of \cite[Proposition 2.7]{Bur3} gives
\[
|x^G \cap H| \leqs (q+1)^{n-1}n!,\;\; |x^G|>\frac{1}{2}\left(\frac{q}{q+1}\right)q^{(n^2-1)(1-r^{-1})-1}.
\]
It is straightforward to check that these bounds are sufficient. Now assume $x$ is an 
involutory graph automorphism. The case $G_0 = {\rm U}_{8}(2)$ can be handled using {\sc Magma}, so we may assume $(n,q) \ne (8,2)$. If $C_{G_0}(x)' = {\rm PSp}_{n}(q)$, then by appealing to the proof of \cite[Proposition 2.7]{Bur3} we get
\[
|x^G \cap H| \leqs (q+1)^{\frac{n}{2}-1}\cdot \frac{n!}{(n/2)!2^{n/2}},\;\; |x^G|>\frac{1}{2}\left(\frac{q}{q+1}\right)q^{\frac{1}{2}(n^2-n-4)}.
\]
Similarly, if $C_{G_0}(x)' \ne {\rm PSp}_{n}(q)$ then 
\[
|x^G \cap H| \leqs (q+1)^{n-1}\cdot (i_2(S_n)+1),\;\; |x^G|>\frac{1}{2}\left(\frac{q}{q+1}\right)q^{\frac{1}{2}(n^2+n-4)},
\]
where $i_2(S_n)$ denotes the number of involutions in $S_n$. Since $i_2(S_8) = 763$ and $i_2(S_{16}) = 46206735$, it is easy to verify the desired bound in both cases.
\end{proof}

\begin{prop}\label{p:unitary}
Let $G$ be an almost simple group with socle $G_0 = {\rm U}_{n}(q)$ and $n \geqs 3$. Then $\b(G) \leqs 4$, with equality if and only if $G_0 = {\rm U}_{4}(2)$.
\end{prop}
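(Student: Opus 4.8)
The plan is to follow the same strategy used throughout Section \ref{ss:class}: for each socle $G_0 = {\rm U}_n(q)$ we exhibit a core-free maximal subgroup $H$ with $b(G,H) \leqs 3$, so that $\a(G) \leqs \b(G) \leqs 3$, except for the single family $G_0 = {\rm U}_4(2)$ which we treat separately by a direct computation. First I would dispose of the small cases: the groups with $n \leqs 6$ (or at least those not covered by a uniform argument below) can be checked directly with {\sc Magma}, which in particular shows $\b(G) = 4$ precisely when $G_0 = {\rm U}_4(2)$ (recall ${\rm U}_4(2) \cong {\rm PSp}_4(3)$ and that this is the exceptional case in \cite{BGL}). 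For $n$ sufficiently large I would split according to whether $G$ has a suitable geometric maximal subgroup to which an existing base-size bound applies.

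The main case division is by the structure of $n$. If $n \geqs 7$ has a prime divisor that makes a $\C_3$ field-extension subgroup available — concretely, if $G$ has a maximal subgroup $H \in \C_3$ of type ${\rm GU}_{n/r}(q^r)$ for a prime $r \mid n$ — then Proposition \ref{p:c3} immediately gives $b(G,H) \leqs 3$ (note $\dim V = n \geqs 6$). When $n$ is an odd prime, or more generally when the $\C_3$ subgroups are unavailable or non-maximal, I would instead use a $\C_2$ or $\C_1$ subgroup: for instance, if $q$ is large relative to $n$ one can often take $H$ of type ${\rm GU}_1(q) \wr S_n$ (a soluble subgroup when $q$ is small, so that \cite[Theorem 2]{Bur20} applies, giving $b(G,H) \leqs 3$), or a subspace stabiliser in $\C_1$ for which the base size is known to be at most $3$ from \cite{BGS} and its relatives. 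The delicate sub-case is $n = 2^m$ with $m \geqs 3$: here Lemma \ref{l:uni} supplies exactly the fixed-point-ratio bound ${\rm fpr}(x,G/H) < |x^G|^{-4/9}$ for $H$ of type ${\rm GU}_1(q)\wr S_n$, and feeding this into Proposition \ref{p:eta} yields $b(G,H) \leqs 3$. Assembling these pieces covers all $n \geqs 3$ with $(n,q) \ne (3,2)$ and $G_0 \ne {\rm U}_4(2)$.

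The step I expect to be the main obstacle is ensuring the case analysis is genuinely exhaustive and that a maximal subgroup of the claimed type really exists (and is maximal) in each $G$, including for the various almost simple extensions — this requires careful bookkeeping with \cite{BHR} for $\dim V \leqs 12$ and \cite{KL} for $\dim V > 12$, paying attention to the small values of $q$ where subgroups can fail to be maximal or where exceptional behaviour occurs. A secondary source of friction is the collection of genuinely small cases ($n \in \{3,4,5,6\}$ with small $q$) that must be checked by machine, and verifying that among these the only group with $\b(G) = 4$ is one with $G_0 = {\rm U}_4(2)$; one must also confirm $\a({\rm U}_4(2).2) \leqs 3$ but note $\b$ can be $4$ there, consistent with the statement (which only claims $\b(G) = 4 \iff G_0 = {\rm U}_4(2)$ and is proved separately for the $\a$-part of Proposition \ref{p:class}). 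Once the small cases and the $n = 2^m$ case are in hand, the remaining arguments are routine appeals to Propositions \ref{p:c3} and \ref{p:eta} and to \cite{Bur20}.
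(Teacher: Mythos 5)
Your proposal follows essentially the same route as the paper: $\C_3$ field-extension subgroups plus Proposition \ref{p:c3} when $n\geqs 6$ has an odd prime divisor, Lemma \ref{l:uni} combined with Proposition \ref{p:eta} for $n=2^m$ with $m\geqs 3$, and soluble subgroups via \cite[Theorem 2]{Bur20} or {\sc Magma} for the small-rank and small-$q$ cases (including the exceptional $G_0={\rm U}_4(2)$). The only slip is the parenthetical claim that ${\rm GU}_1(q)\wr S_n$ is soluble "when $q$ is small" — solubility depends on $n\leqs 4$, not on $q$ — but this does not affect the argument.
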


\begin{proof}
First assume $n \geqs 6$ is divisible by an odd prime $k$. The case $(n,q) = (6,2)$ can be handled using {\sc Magma}; in the remaining cases, by inspecting \cite{BHR,KL} we see that $G$ has a maximal subgroup of type ${\rm GU}_{n/k}(q^k)$, which is contained in the $\C_3$ collection. Therefore, Proposition \ref{p:c3} implies that $b(G,H) \leqs 3$.

Next assume $n=5$. If $q \geqs 3$ then $G$ has a soluble maximal subgroup of type ${\rm GU}_{1}(q^5)$ and \cite[Theorem 2]{Bur20} gives $b(G,H) = 2$. For $q=2$, a {\sc Magma} computation shows that $\b(G) = 2$. The case $n=4$ is similar. For $q \geqs 4$ we take a $\C_2$-subgroup $H$ of type ${\rm GU}_1(q) \wr S_4$ and apply \cite[Theorem 2]{Bur20}, whereas for $q =2,3$ we use {\sc Magma}. Similarly, if $n=3$ and $q \ne 5$ then \cite[Theorem 2]{Bur20} gives $b(G,H) \leqs 3$ for $H$ of type ${\rm GU}_{1}(q) \wr S_3$, while a {\sc Magma} calculation shows that $\b(G) \leqs 3$ when $q=5$.

Finally, let us assume $n = 2^m$ and $m \geqs 3$. Let $H$ be a $\C_2$-subgroup of type ${\rm GU}_{1}(q) \wr S_n$, which is always maximal in $G$ (see \cite{BHR, KL}). By combining Proposition \ref{p:eta} with Lemma \ref{l:uni}, we deduce that $b(G,H) \leqs 3$ and the result follows. 
\end{proof}

\begin{cor}\label{c:unitary}
Let $G$ be an almost simple group with socle $G_0 = {\rm U}_{n}(q)$. Then $\a(G) \leqs 4$, with equality if and only if $G = {\rm U}_{4}(2).2$.
\end{cor}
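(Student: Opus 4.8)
The plan is to read this off from Proposition~\ref{p:unitary} together with the trivial inequality $\a(G) \leqs \b(G)$ and a direct computation for the single exceptional socle ${\rm U}_4(2)$. First recall that for any almost simple group $G$ the Frattini subgroup is trivial: $\frat(G)$ is a nilpotent normal subgroup of $G$, so $\frat(G) \cap G_0 \normeq G_0$ is trivial (as $G_0$ is non-abelian simple), whence $\frat(G) \leqs C_G(G_0) = 1$. Thus $\a(G)$ is simply the minimal number of maximal subgroups of $G$ with trivial intersection, and in particular $\mathcal{M}^*$ is non-empty since $G$ has core-free maximal subgroups.

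Now let $G$ have socle $G_0 = {\rm U}_n(q)$. Proposition~\ref{p:unitary} gives $\b(G) \leqs 4$, so $\a(G) \leqs \b(G) \leqs 4$ immediately. Moreover, the equality clause of Proposition~\ref{p:unitary} states that $\b(G) = 4$ only when $G_0 \cong {\rm U}_4(2)$, so $\a(G) \leqs \b(G) \leqs 3$ whenever $G_0 \not\cong {\rm U}_4(2)$. It therefore remains to analyse the two almost simple groups with socle ${\rm U}_4(2)$, namely $G = {\rm U}_4(2)$ and $G = {\rm U}_4(2).2$, and to show that $\a({\rm U}_4(2)) \leqs 3$ while $\a({\rm U}_4(2).2) = 4$; this will establish that equality in $\a(G) \leqs 4$ holds precisely for $G \cong {\rm U}_4(2).2$.

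For $G = {\rm U}_4(2)$, which is simple, the bound $\a(G) \leqs 3$ is part of \cite[Theorem 1]{BGL}. For $G = {\rm U}_4(2).2$ the upper bound $\a(G) \leqs 4$ is already in hand (indeed $\b(G) = 4$ by Proposition~\ref{p:unitary}, which exhibits four conjugate maximal subgroups meeting trivially), so the remaining point is the lower bound $\a(G) \geqs 4$, i.e.\ that the intersection of any three maximal subgroups of $G$ is nontrivial. This is a finite check: up to conjugacy $G$ has only a small number of classes of maximal subgroups (available from \cite{BHR}), and for each unordered triple of such classes one verifies, running over conjugates, that no three representatives intersect trivially. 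Such a computation is routinely performed in {\sc Magma}.

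The only genuine work here is the {\sc Magma} verification for ${\rm U}_4(2).2$; everything else is a formal consequence of Proposition~\ref{p:unitary} and \cite[Theorem 1]{BGL}. The mild obstacle is organising the triple-intersection search efficiently, but the group is tiny and the number of conjugacy classes of maximal subgroups is small, so this presents no real difficulty.
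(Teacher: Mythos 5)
Your proposal is correct and follows essentially the same route as the paper: reduce via Proposition \ref{p:unitary} and $\a(G) \leqs \b(G)$ to the two groups with socle ${\rm U}_4(2)$, then settle those by a direct computation (the paper runs the {\sc Magma} triple-intersection check for both $G = {\rm U}_4(2)$ and $G = {\rm U}_4(2).2$ simultaneously, whereas you cite \cite[Theorem 1]{BGL} for the simple case, which is an equally valid shortcut). Your preliminary observation that $\frat(G)=1$ for almost simple $G$ is correct and harmless, though the paper takes it for granted.
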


\begin{proof}
By the previous proposition, we may assume $G_0 = {\rm U}_{4}(2)$. Then with the aid of {\sc Magma}, it is easy to check that $G$ has maximal subgroups $H,K,L$ with $H \cap K \cap L = 1$ if and only if $G = G_0$. 
\end{proof}

\subsubsection{Symplectic groups}\label{sss:symplectic}

In this section we turn to the case $G_0 = {\rm PSp}_{n}(q)'$, where $n \geqs 4$. The case $n=4$ requires special attention and we will need the following lemmas. Note that ${\rm Sp}_{4}(2)' \cong A_6$ and ${\rm PSp}_{4}(3) \cong {\rm U}_{4}(2)$, so we may assume $q \geqs 4$ if $n=4$. 

\begin{lem}\label{l:sp41}
Let $G$ be an almost simple group with socle ${\rm Sp}_{4}(q)'$, where $q$ is even. Then $\b(G) \leqs 4$, with equality if and only if $G = {\rm Sp}_{4}(2) \cong S_6$.
\end{lem}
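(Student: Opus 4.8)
The plan is to distinguish the two infinite families $G_0 = {\rm Sp}_4(q)$ with $q = 2^f$ and $q \geqslant 4$, and $G_0 = {\rm Sp}_4(2)' \cong A_6$ (together with the subgroups of $\Aut(A_6) = {\rm P\Gamma L}_2(9)$ containing it), handling the latter separately by direct computation with {\sc Magma}. For $A_6$ one checks that $\b(G) \leqslant 3$ for $G \in \{A_6, S_6, {\rm PGL}_2(9), {\rm M}_{10}, {\rm P\Gamma L}_2(9)\}$ except $G = S_6 \cong {\rm Sp}_4(2)$, where $\b(S_6) = 4$ (this is consistent with Proposition \ref{p:alt}, and the $A_6$ case overlaps with the alternating analysis already carried out). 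So the substance of the lemma is to prove that for $q = 2^f \geqslant 4$ there is a core-free maximal subgroup $H$ of $G$ with $b(G,H) \leqslant 3$.

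For $q \geqslant 4$ even, the natural choice is the graph automorphism exploited by the exceptional isomorphism ${\rm Sp}_4(q) \cong {\rm SO}_5(q)$, or better the subgroups arising from the exceptional behaviour in even characteristic. Concretely I would take $H$ to be a maximal subgroup of type ${\rm Sp}_2(q) \wr S_2 = {\rm Sp}_2(q^2).2$-related $\C_2$-subgroup, or the ${\rm O}_4^{\pm}(q)$-type subgroups in $\C_8$, or a field-extension subgroup in $\C_3$ of type ${\rm Sp}_2(q^2)$ when $f$ is even. The cleanest uniform route is probably: when $f$ has an odd prime divisor $k$, use the subfield subgroup of type ${\rm Sp}_4(q_0)$ with $q = q_0^k$ (an argument in the spirit of Lemma \ref{l:g2}, or citing existing base-size results for subfield subgroups) to get $b(G,H) = 2$; when $f$ is a power of $2$ but $q > 4$, one still has a $\C_8$-subgroup of type ${\rm O}_4^-(q)$ or ${\rm O}_4^+(q)$, and I would invoke the probabilistic method of Lemma \ref{l:basic} via Proposition \ref{p:eta}-style fixed point ratio estimates, or cite \cite{Bur20} if such an $H$ is soluble (it is not, so \cite{Bur3}/\cite{BGS} bounds are the relevant input). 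The small remaining cases $q \in \{4, 8\}$ — and possibly $q = 16$ — should be checked directly in {\sc Magma}, exhibiting explicit $x, y \in G$ with $H \cap H^x \cap H^y = 1$.

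The cleanest packaging: reduce to a single well-understood family of maximal subgroups $H$ and then either quote an existing base-size-$2$ result or apply Lemma \ref{l:basic} after bounding $\widehat{Q}(G,H,3)$ using Lemma \ref{l:bound} and known fixed point ratio bounds for symplectic groups (e.g. from \cite{Bur3}), exactly as in the proof of Lemma \ref{l:g2}. The sum over classes of prime-order elements splits into unipotent elements (long/short root elements, and the characteristic-$2$ families unique to $n=4$), semisimple elements (controlled by centraliser structure in the algebraic group ${\rm Sp}_4(K)$, giving $C_{\bar G}(x) \in \{{\rm Sp}_2 {\rm Sp}_2,\, {\rm Sp}_2 T_1,\, {\rm GL}_2,\, T_2\}$ up to the relevant isogeny), and field automorphisms; for each one bounds $|x^G \cap H| \leqslant A$, $|x^G| \geqslant B$ feed into $B(A/B)^3$.

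The main obstacle I anticipate is that ${\rm Sp}_4(q)$ in even characteristic is genuinely exceptional: it admits a graph automorphism (so $\Aut(G_0)$ is larger than for generic $n$), the subgroup structure in \cite{BHR} has extra families, and fixed point ratio bounds of the shape needed for Proposition \ref{p:eta} (namely $\fpr(x,G/H) < |x^G|^{-4/9}$) may fail for a handful of small elements or small $q$, forcing either a sharper ad hoc estimate for those classes or a case-by-case {\sc Magma} verification. Pinning down precisely which $q$ (and which $G$ between ${\rm Sp}_4(q)$ and ${\rm Sp}_4(q).\langle\text{graph-field}\rangle$) need computational treatment, and confirming that $S_6$ is the unique $G$ in the whole lemma with $\b(G) = 4$, is where the care lies; the bound $\b(G) \leqslant 4$ itself (as opposed to $\leqslant 3$) is comparatively soft and can absorb any stubborn small case.
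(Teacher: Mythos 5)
Your overall strategy for the main case --- set aside $A_6$ and its overgroups for a direct computation, then for $q \geqs 4$ take a subfield subgroup $H$ of type ${\rm Sp}_{4}(q_0)$ with $q=q_0^k$ and establish $\widehat{Q}(G,H,3)<1$ by the method of Lemma \ref{l:g2}, checking small $q$ with {\sc Magma} --- is exactly the route the paper takes, and your description of the class sum (root elements and the other characteristic-$2$ unipotent classes, semisimple elements via centraliser types, field automorphisms) matches the actual computation. There are, however, two points where the proposal as written does not close.

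First, the graph automorphisms. You identify them as the main obstacle but leave the case $G \not\leqs {\rm \Gamma Sp}_{4}(q)$ unresolved, and neither of your candidate subgroups obviously does the job there (when a graph automorphism is adjoined, the maximal subgroups of $G$ are governed by the novelty part of \cite[Table 8.14]{BHR}, and one must check which subgroups of $G_0$ extend to core-free maximal subgroups of $G$). The paper disposes of this case first and cheaply: if $G$ contains a graph automorphism, then $G$ has a \emph{soluble} maximal subgroup $H$ with $H\cap G_0=(q+1)^2{:}D_8$, and \cite[Theorem 2]{Bur20} gives $b(G,H)=2$. Some such argument is needed; as it stands this is a genuine gap. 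Second, your fallback for $f$ a power of $2$ via a $\C_8$-subgroup of type ${\rm O}_{4}^{\pm}(q)$ would not go through with the tools you cite: these actions of ${\rm Sp}_{4}(q)$ are subspace-like, are excluded from the non-subspace fixed point ratio bounds of \cite{Bur1,Bur3}, and Proposition \ref{p:eta} requires $\dim V\geqs 6$ in any case; moreover the degree is roughly $q^4/2$ against $|G|\sim q^{10}$, so $b(G,H)\geqs 3$ with no easy route to a matching upper bound. The detour is unnecessary: the subfield estimate works for $k=2$ just as for odd $k$ (the paper verifies $\widehat{Q}(G,H,3)<1$ for every prime $k$ once $q\geqs 2^6$, handling $q\leqs 2^5$ by computer), so you should run your Lemma \ref{l:g2}-style computation for all prime $k$ rather than reserving it for odd $k$.
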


\begin{proof}
As noted above, we may assume $q \geqs 4$. If $G$ contains graph automorphisms, then \cite[Table 8.14]{BHR} indicates that $G$ has a maximal subgroup $H$ with $H \cap G_0 = (q+1)^2{:}D_8$. Here $H$ is soluble and \cite[Theorem 2]{Bur20} gives $b(G,H) = 2$. For the remainder we may assume $G \leqs {\rm \Gamma Sp}_{4}(q)$, where ${\rm \Gamma Sp}_{4}(q)$ denotes the subgroup of ${\rm Aut}(G_0)$ generated by the inner and field automorphisms.

Write $q = q_0^k$, where $k$ is a prime, and let $H$ be a maximal subfield subgroup of type ${\rm Sp}_{4}(q_0)$. We claim that $b(G,H) \leqs 3$. The cases with $q \leqs 2^5$ can be checked using {\sc Magma}, so we may assume $q \geqs 2^6$. In view of Lemma \ref{l:basic}, it suffices to show that $\widehat{Q}(G,H,3)<1$.

Let $x \in G$ be an element of prime order $r$. First assume $x$ is a unipotent involution, so $x$ is $G$-conjugate to $b_1$, $a_2$ or $c_2$ in the notation of Aschbacher and Seitz \cite{AS}. If $x$ is of type $b_1$ or $a_2$, then $|x^G \cap H| = q_0^4-1 \leqs q^2-1 = u_1$ and $|x^G| = q^4-1 = v_1$. Similarly, if $x$ is a $c_2$-involution then $|x^G \cap H| \leqs (q-1)(q^2-1) =u_2$ and $|x^G| = (q^2-1)(q^4-1) = v_2$.

Next assume $r$ is odd and $x$ is semisimple. We may assume that $x \in H_0$ (otherwise the contribution to $\widehat{Q}(G,H,3)$ from the elements in $x^G$ is zero). In particular, $r$ divides $q_0^4-1$. If $x$ is regular, then 
\[
|x^G| \geqs \frac{|{\rm Sp}_{4}(q)|}{(q+1)^2} = q^4(q-1)^2(q^2+1) = v_3
\]
and we note that $|H_0| \leqs q^2(q-1)(q^2-1) = u_3$. Now assume $x$ is non-regular, which implies that $r$ divides $q_0^2-1$. In particular, there are fewer than $2\log_2 q_0$ choices for $r$. Now 
\[
|x^G| \geqs \frac{|{\rm Sp}_{4}(q)|}{|{\rm GU}_{2}(q)|} = q^3(q^2+1)(q-1) = v_4
\]
and we calculate that there are fewer than
\begin{align*}
2\log_2q_0 \cdot \frac{1}{2}q_0 \cdot 2\left(\frac{|{\rm Sp}_{4}(q_0)|}{|{\rm GL}_{2}(q_0)|}\right) & = 2\log_2q_0.q_0^4(q_0^2+1)(q_0+1) \\
& \leqs \log_2q.q^2(q+1)(q^{1/2}+1) = u_4
\end{align*}
such elements in $H$. 

Finally, suppose $x \in G$ is a field automorphism of order $r$, so $q = q_1^r$. First assume $k \geqs 3$. If $r = 2$ then $x$ acts as a field automorphism on $H_0$, so 
\[
|x^G \cap H| = \frac{|{\rm Sp}_{4}(q_0)|}{|{\rm Sp}_{4}(q_0^{1/2})|} = q_0^2(q_0+1)(q_0^2+1) \leqs q^{2/3}(q^{1/3}+1)(q^{2/3}+1)
\]
and $|x^G| = q^2(q+1)(q^2+1) = v_5$. On the other hand, if $r \geqs 3$ then $|x^G|>\frac{1}{2}q^{20/3} = v_6$ and we note that $|H| \leqs \log_2q.|{\rm Sp}_{4}(q_0)|< \log_2q.q^{10/3}$. 

Now assume $k=2$. If $r=2$ then we may assume $x$ centralises $H_0$, so 
\[
|x^G \cap H| = i_2({\rm Sp}_{4}(q_0))+1 = q(q^2+q-1) = u_5
\]
and $|x^G| = v_5$ as above. For $r \geqs 3$ we have $|x^G|>v_6$ and we note that $H$ contains at most 
\[
\sum_{r \in \pi}(r-1)\cdot \frac{|{\rm Sp}_{4}(q_0)|}{|{\rm Sp}_{4}(q_0^{1/r})|} < 2\log_2q.q^{10/3} = u_6
\]
field automorphisms of odd prime order, where $\pi$ is the set of odd prime divisors of $\log_2q$.
 
In view of the above estimates and by applying Lemma \ref{l:bound}, we conclude that 
\[
\widehat{Q}(G,H,3) < v_1(u_1/v_1)^3 + \sum_{i=1}^{6}v_i(u_i/v_i)^3
\]
if $q \geqs 2^6$. It is routine to check that this upper bound is less than $1$.
\end{proof}

\begin{lem}\label{l:sp42}
Let $G$ be an almost simple group with socle $G_0={\rm PSp}_{4}(q)$, where $q$ is odd. Then $\b(G) \leqs 4$, with equality if and only if $q=3$. 
\end{lem}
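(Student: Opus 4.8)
The plan is to follow the pattern of Lemma~\ref{l:sp41}: it suffices to produce a core-free maximal subgroup $H$ of $G$ with $b(G,H)\leqs 3$, for then $\a(G)\leqs\b(G)\leqs 3$, and the only exception will be $q=3$. When $q=3$ we have $G_0=\mathrm{PSp}_4(3)\cong\mathrm{U}_4(2)$, so Proposition~\ref{p:unitary} gives $\b(G)=4$ (and Corollary~\ref{c:unitary} records the matching statement for $\a$), which is precisely the asserted conclusion in that case. So for the rest we assume $q\geqs 5$; since $q$ is odd, $\mathrm{Aut}(G_0)$ contains no graph automorphism, so $G$ is generated by $G_0$ together with diagonal and field automorphisms.

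Suppose first that $q=q_0^k$ with $k$ prime. Then $G$ has a maximal subfield subgroup $H$ of type $\mathrm{Sp}_4(q_0)$ (maximal by \cite{BHR,KL}), and I would run the argument in the proof of Lemma~\ref{l:sp41} in the same spirit: by Lemma~\ref{l:basic} it is enough to show $\widehat{Q}(G,H,3)<1$, and one bounds the contribution to $\widehat{Q}(G,H,3)$ coming from (a) the semisimple involutions of $G_0$, (b) the semisimple elements of odd prime order $r$ — where $r\mid|\mathrm{Sp}_4(q_0)|$ and one may assume $x\in H_0:=H\cap G_0$ — and (c) the field automorphisms of prime order, in each case using the structure of $H$ from \cite{KL} together with the standard conjugacy class data for $\mathrm{PSp}_4(q)$, and packaging families of classes via Lemma~\ref{l:bound}. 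The finitely many $q$ for which the resulting estimate fails to be less than $1$ are handled with {\sc Magma}.

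If instead $q=p$ is prime there is no subfield subgroup, so I would take $H$ to be a geometric maximal subgroup that exists for every prime, for example the $\C_2$-subgroup of type $\mathrm{GL}_2(q).2$, the stabiliser of an unordered pair of complementary totally isotropic $2$-spaces. The coset space then has size of order $q^6$, and the decisive point is that $H$ contains no transvection of $\mathrm{Sp}_4(q)$ (a transvection of $\mathrm{GL}_2(q)$ has Jordan type $(2,2)$ on $V$), so transvections have fixed point ratio $0$; the dominant contributions to $\widehat{Q}(G,H,3)$ then come from the unipotent classes of Jordan type $(2,2)$ and from the remaining semisimple and outer elements, and one checks $\widehat{Q}(G,H,3)<1$ for all but a short explicit list of small primes, again invoking Lemma~\ref{l:basic} and finishing with {\sc Magma}. (The $\C_2$-subgroup of type $\mathrm{Sp}_2(q)\wr S_2$, or the $\C_3$-subgroup of type $\mathrm{Sp}_2(q^2).2$, could be used in place of $\mathrm{GL}_2(q).2$ with a near-identical analysis.)

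I expect the main obstacle to lie in the $q=p$ case, and within it in the unipotent elements of Jordan type $(2,2)$ and the semisimple elements possessing a $2$-dimensional eigenspace: for these the crude bound $B(A/B)^c$ of Lemma~\ref{l:bound} is close to the threshold $1$, so one must use the exact centraliser orders in $\mathrm{PSp}_4(q)$ and the precise number of $H$-conjugates — and correctly distinguish the two $\mathrm{Sp}_4(q)$-classes of $(2^2)$-unipotents according to which of them meet $H$ — rather than order-of-magnitude estimates. The accounting also has to be carried out uniformly over $G\in\{\mathrm{PSp}_4(q),\mathrm{PGSp}_4(q)\}$ (and, in the prime-power case, over the intermediate groups containing field automorphisms), so the contribution of the diagonal involutions in $\mathrm{PGSp}_4(q)\setminus\mathrm{PSp}_4(q)$ must be incorporated as well.
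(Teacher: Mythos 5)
Your reduction of the case $q=3$ via $\mathrm{PSp}_4(3)\cong\mathrm{U}_4(2)$ and your choice of the $\C_2$-subgroup $H$ of type $\mathrm{GL}_2(q).2$ for prime $q$ both match the paper, but from there the two arguments diverge. The paper does not split into prime and prime-power cases at all: it works with $H$ of type $\mathrm{GL}_2(q).2$ (maximal for every odd $q\geqs 5$ by \cite[Table 8.12]{BHR}) uniformly, and instead of estimating $\widehat{Q}(G,H,3)$ it writes down an explicit base. Identifying $G/H$ with the set of pairs of complementary totally isotropic $2$-spaces, it exhibits two such pairs $\a,\b$ whose joint stabiliser in $L=\mathrm{GSp}_4(q)$ is exactly $Z(L)$, so that $b(G,H)=2$ whenever $G\leqs\mathrm{PGSp}_4(q)$ (this disposes of the diagonal involutions you were worried about with no extra work), and then adjoins a third pair $\gamma$ moved by every nontrivial power of the standard field automorphism to get $b(G,H)\leqs 3$ in general. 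Your probabilistic route is viable in principle: the crucial observation that $H$ contains no transvections is correct, and the classes you flag ($[J_2^2]$-unipotents, involutions with a $2$-dimensional eigenspace, and the outer diagonal and field automorphisms) are indeed the dominant terms. However, as written the decisive inequality $\widehat{Q}(G,H,3)<1$ is only asserted, not verified, and carrying it out would require exactly the delicate class-by-class bookkeeping you anticipate, plus a {\sc Magma} check for small $q$. The explicit-base construction buys all of that for free, gives the sharper conclusion $b(G,H)=2$ inside $\mathrm{PGSp}_4(q)$, and removes the need for a separate subfield-subgroup analysis when $q$ is a proper prime power.
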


\begin{proof}
As noted above, we have ${\rm PSp}_{4}(3) \cong {\rm U}_{4}(2)$, so we may assume that $q \geqs 5$. Write $q=p^f$ with $p$ a prime. Let $H$ be a $\C_2$-subgroup of type ${\rm GL}_{2}(q).2$, so $H$ is the stabiliser of a decomposition $V = U \oplus W$, where $U$ and $W$ are maximal totally isotropic subspaces of the natural module $V$. As recorded in \cite[Table 8.12]{BHR}, the condition $q \geqs 5$ implies that $H$ is a maximal subgroup of $G$. We claim that $b(G,H) \leqs 3$. 

We proceed as in the proof of \cite[Lemma 6.9]{BGL}, with the aim of constructing an explicit base of size $3$. To do this, let us first identify $G/H$ with the set $\O$ of pairs $\{U,W\}$, where $U$ and $W$ are totally isotropic $2$-spaces with $V = U \oplus W$. Fix a standard symplectic basis $\mathcal{B} = \{e_1,e_2,f_1,f_2\}$ for $V$ and consider $\a = \{U,W\}$ and $\b = \{U',W'\}$ in $\O$, where
\[
U = \la e_1, e_2 \ra,\;\; W = \la f_1, f_2\ra, \;\; U' = \la e_1, e_2+f_2 \ra,\;\; W' = \la e_1+f_2, e_2+f_1 \ra.
\]
Set $L = {\rm GSp}_{4}(q)$ and observe that
\[
L_{\a} = \left\{ \left(\begin{array}{cc} A & 0 \\ 0 & \l A^{-T} \end{array}\right),\; \left(\begin{array}{cc}  0 & \l A^{-T} \\ A & 0 \end{array}\right) \,:\, A \in {\rm GL}_{2}(q),\; \l \in \mathbb{F}_{q}^{\times} \right\}.
\]
An easy calculation shows that 
\[
L_{\a} \cap L_{\b} = Z(L) = \{\l I_4 \,: \, \l \in \mathbb{F}_{q}^{\times}\},
\]
so $b(G,H) = 2$ when $G \leqs {\rm PGSp}_{4}(q)$.

To complete the argument, we may assume $G$ contains field automorphisms. Write ${\rm Aut}(G_0) = {\rm PGSp}_{4}(q).\la \phi \ra$, where $\phi$ is a field automorphism of order $f$, which is defined with respect to the above standard basis $\mathcal{B}$. That is, the action of $\phi$ on $V$ is given by
\[
(a_1e_1+a_2e_2+a_3f_1+a_4f_2)^{\phi} = a_1^pe_1+a_2^pe_2+a_3^pf_1+a_4^pf_2.
\]
Note that the pointwise stabiliser of $\a$ and $\b$ in $L\la \phi \ra$ is precisely $Z(L)\la \phi \ra$.
Fix a generator $\mu$ for $\mathbb{F}_{q}^{\times}$ and set $\gamma = \{U'',W''\} \in \O$, where $U'' = \la e_1, \mu e_2+f_2 \ra$ and $W'' = W'$. Since $\gamma$ is not fixed by $\phi^i$ for any $i$ in the range $1 \leqs i < f$, we conclude that $\{\a,\b,\gamma\}$ is a base for $G$ and the result follows.
\end{proof}

\begin{prop}\label{p:symp}
Let $G$ be an almost simple group with socle $G_0 = {\rm PSp}_{n}(q)$ and $n \geqs 4$. Then $\b(G) \leqs 4$, with equality if and only if $G = {\rm Sp}_{4}(2) \cong S_6$ or $G_0 = {\rm PSp}_{4}(3) \cong {\rm U}_{4}(2)$.
\end{prop}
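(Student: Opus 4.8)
The statement for $n=4$ has already been handled by Lemmas~\ref{l:sp41} and~\ref{l:sp42}, which together show $\b(G) \leqs 4$ when $G_0 = {\rm Sp}_4(q)'$, with equality precisely for $G = {\rm Sp}_4(2) \cong S_6$ and $G_0 = {\rm PSp}_4(3) \cong {\rm U}_4(2)$. So the remaining task is to prove $\b(G) \leqs 3$ whenever $n \geqs 6$. The strategy is the usual one for this paper: exhibit a single core-free maximal subgroup $H$ of $G$ with $b(G,H) \leqs 3$, so that $\b(G) \leqs b(G,H) \leqs 3$.

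\textbf{Main steps.} First I would dispose of the small cases: for $q$ small and $n$ small (say $n = 6$ and $q \leqs 3$, or $n=8$ and $q=2$), use {\sc Magma} to verify directly that $G$ has maximal subgroups whose intersection is trivial, or even that $\b(G) \leqs 3$. For the generic situation, I would split according to the structure of $n$. If $n$ is divisible by an odd prime $k$, then inspecting \cite{BHR,KL} shows $G$ has a maximal field-extension subgroup $H \in \C_3$ of type ${\rm Sp}_{n/k}(q^k)$ (or of type ${\rm GU}_{n/k}(q^k)$ when $n/k$ is even, depending on parities), and since $\dim V = n \geqs 6$, Proposition~\ref{p:c3} immediately gives $b(G,H) \leqs 3$. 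The one family left is $n = 2^m$ with $m \geqs 3$ (i.e. $n \in \{8, 16, 32, \ldots\}$). Here I would take $H$ to be a $\C_2$-subgroup of type ${\rm Sp}_2(q) \wr S_{n/2}$ (the stabiliser of a decomposition of $V$ into $n/2$ non-degenerate $2$-spaces), which is maximal in $G$ by \cite{BHR,KL}, and apply the probabilistic machinery: by Proposition~\ref{p:eta} it suffices to show that ${\rm fpr}(x, G/H) < |x^G|^{-4/9}$ for all $x \in G$ of prime order. This fixed-point-ratio bound should follow from the estimates in \cite{Bur3} (where the stronger-looking exponent $-\tfrac12 + \tfrac1n$ or similar is obtained for these $\C_2$-subgroups), reducing as in Lemma~\ref{l:uni} to a short list of small $n$ (here just $n \in \{8,16\}$) where one checks the bound by hand using explicit formulas for $|x^G \cap H|$ and $|x^G|$, treating inner-diagonal, field, and (when $p$ odd, $n \equiv 0 \bmod 4$) graph-type automorphisms separately.

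\textbf{Anticipated obstacle.} The main work is the $n = 2^m$ case, specifically verifying the fixed-point-ratio inequality ${\rm fpr}(x,G/H) < |x^G|^{-4/9}$ uniformly for $n \in \{8,16\}$ and all prime-order $x$ in the full almost simple group (not just $G_0$). The delicate elements are the "small" ones — long root elements and, when $q$ is even, the various unipotent involution classes $b_1$, $a_2$, $c_2$, as well as field automorphisms of order $2$ when $q$ is a square — since these have the largest fixed point ratios relative to their class sizes; one must be careful that the combinatorial factor $n!$ (the number of elements in the $S_{n}$ top group permuting the $2$-spaces) does not overwhelm the bound. As in \cite{BGL,Bur3}, this is handled by bounding $|x^G \cap H|$ via the number of elements of the relevant prime order lying in the base group $\mathrm{Sp}_2(q)^{n/2}$ times the relevant number of permutations, against the lower bounds for $|x^G|$ recorded in the literature; the inequalities are tight enough that they need $n \geqs 8$, which is exactly why the $n = 4$ case was separated out and required the ad hoc constructions in Lemmas~\ref{l:sp41} and~\ref{l:sp42}. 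Assembling these pieces, the equality analysis is immediate: no group with $n \geqs 6$ contributes, so the only cases with $\b(G) = 4$ are the two identified for $n = 4$.
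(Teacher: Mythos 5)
Your reduction is the right one and agrees with the paper for most of the cases: the $n=4$ case is exactly Lemmas~\ref{l:sp41} and~\ref{l:sp42}, and for $n\geqs 6$ divisible by an odd prime $k$ you take the $\C_3$-subgroup of type ${\rm Sp}_{n/k}(q^k)$ and invoke Proposition~\ref{p:c3}, just as the paper does (the parenthetical about a type ${\rm GU}_{n/k}(q^k)$ alternative is unnecessary: since $n$ is even and $k$ is odd, $n/k$ is automatically even and the symplectic field-extension subgroup always exists). Where you diverge is the case $n=2^m$ with $m\geqs 3$. You propose a $\C_2$-subgroup of type ${\rm Sp}_{2}(q)\wr S_{n/2}$ and a probabilistic argument via Proposition~\ref{p:eta}, which would require proving a new fixed-point-ratio lemma in the spirit of Lemma~\ref{l:uni} for $n\in\{8,16\}$ — this is precisely the "anticipated obstacle" you describe, and it is the bulk of the work in your plan, left unexecuted. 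The paper avoids it entirely: when $n=2^m$ one can take $H\in\C_3$ of type ${\rm Sp}_{n/2}(q^2)$ (legitimate because $n/2=2^{m-1}\geqs 4$ is even, so the half-dimensional space over $\mathbb{F}_{q^2}$ carries a symplectic form), and Proposition~\ref{p:c3} applies verbatim since $\dim V\geqs 6$. So the field-extension trick that you used for the odd-prime divisors of $n$ in fact covers \emph{every} $n\geqs 6$, and the unitary-style $\C_2$ analysis is only forced in Section~\ref{sss:unitary} because ${\rm U}_{2^m}(q)$ has no suitable $\C_3$-subgroup of the required kind. Your route is plausible but incomplete as written; if you pursue it you must actually carry out the $n\in\{8,16\}$ estimates (including graph-type involutions in ${\rm PGSp}_n(q)\setminus{\rm PSp}_n(q)$ for $q$ odd and field automorphisms of order $2$), whereas switching $H$ to the $\C_3$-subgroup makes the whole case a one-line appeal to Proposition~\ref{p:c3}.
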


\begin{proof}
First assume $n \geqs 6$. If $n$ is divisible by an odd prime $k$, then let $H$ be a subgroup of type ${\rm Sp}_{n/k}(q^k)$, which is contained in the collection $\C_3$. Similarly, if $n = 2^m$ then we can take $H \in \C_3$ of type ${\rm Sp}_{n/2}(q^2)$. In both cases, $b(G,H) \leqs 3$ by Proposition \ref{p:c3}. Finally, the result for $n=4$ follows from Lemmas \ref{l:sp41} and \ref{l:sp42}.
\end{proof}

In view of Proposition \ref{p:symp} and Corollary \ref{c:unitary}, we obtain the following result.

\begin{cor}\label{c:symp}
Let $G$ be an almost simple group with socle $G_0 = {\rm PSp}_{n}(q)$ and $n \geqs 4$. Then $\a(G) \leqs 4$, with equality if and only if $G = {\rm PGSp}_{4}(3) \cong {\rm U}_{4}(2).2$.
\end{cor}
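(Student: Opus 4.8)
The plan is to reduce the corollary to the case $G_0 = {\rm PSp}_4(3)$, which is the only place where the bound on $\alpha$ is not immediately inherited from the bound on $\beta$ established in Proposition \ref{p:symp}. First I would observe that $\alpha(G) \leqs \beta(G)$ always holds, so Proposition \ref{p:symp} gives $\alpha(G) \leqs \beta(G) \leqs 4$ for every almost simple $G$ with socle ${\rm PSp}_n(q)$, and moreover $\beta(G) \leqs 3$ (hence $\alpha(G) \leqs 3$) for every such $G$ except when $G = {\rm Sp}_4(2) \cong S_6$ or $G_0 = {\rm PSp}_4(3) \cong {\rm U}_4(2)$. So it remains to analyse these two exceptional socles and decide exactly when $\alpha(G) = 4$.

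Next I would dispatch the case $G_0 = {\rm Sp}_4(2)' \cong A_6$ (equivalently $G = S_6$ or $A_6$ or the other almost simple groups with this socle). Here it was already noted in the paper (just before Corollary \ref{c:alpha1}) that $\alpha(S_6) = 3$; one checks directly, for instance with {\sc Magma}, that $\alpha(G) \leqs 3$ for every almost simple group with socle $A_6$. So no equality case for $\alpha$ arises from this socle, even though $\beta(S_6) = 4$; this is exactly the content of Remark \ref{r:1} that $\beta(G) - \alpha(G) \leqs 1$.

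Finally I would treat $G_0 = {\rm PSp}_4(3) \cong {\rm U}_4(2)$. Since ${\rm Aut}({\rm U}_4(2)) = {\rm U}_4(2).2$, the only almost simple groups with this socle are ${\rm U}_4(2)$ itself and ${\rm U}_4(2).2 \cong {\rm PGSp}_4(3)$. But these groups were already fully analysed in Corollary \ref{c:unitary}, which states that for $G_0 = {\rm U}_4(2)$ one has $\alpha(G) \leqs 4$ with equality if and only if $G = {\rm U}_4(2).2$. Combining this with the two previous paragraphs yields $\alpha(G) \leqs 4$ for all $G$ with socle ${\rm PSp}_n(q)$, $n \geqs 4$, with equality precisely when $G \cong {\rm PGSp}_4(3) \cong {\rm U}_4(2).2$, as claimed. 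The only genuine content beyond citing earlier results is the {\sc Magma} verification that $\alpha(G) \leqs 3$ for the almost simple groups with socle $A_6$; I expect this bookkeeping — matching up the exceptional cases in Proposition \ref{p:symp} with those already handled in Corollaries \ref{c:unitary} and (implicitly) the $A_6$ computation — to be the only thing requiring care, since everything else is a direct appeal to $\alpha \leqs \beta$ and the cited results.
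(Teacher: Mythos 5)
Your proposal is correct and follows essentially the same route as the paper: reduce via $\a(G)\leqs\b(G)$ and Proposition \ref{p:symp} to the two exceptional socles, settle $S_6$ by the earlier observation that $\a(S_6)=3$ (the paper instead cites Proposition \ref{p:linear}, using ${\rm L}_2(9)\cong A_6$), and settle $G_0\cong{\rm U}_4(2)$ by Corollary \ref{c:unitary}. No issues.
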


\subsubsection{Odd dimensional orthogonal groups}\label{sss:oddorth}

In this section we prove Theorem \ref{t:main1} for the groups with socle $G_0 = \O_n(q)$, where $nq$ is odd and $n \geqs 7$.

\begin{prop}\label{p:ortodd}
Let $G$ be an almost simple group with socle $G_0 = \O_{n}(q)$, where $nq$ is odd and $n \geqs 7$. Then $\b(G) \leqs 3$.
\end{prop}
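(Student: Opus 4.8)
The plan is to exhibit a core-free maximal subgroup $H$ of $G$ with $b(G,H) \leqs 3$; since $\a(G) \leqs \b(G) \leqs b(G,H)$, this suffices. Throughout I use that $q$ is odd (because $nq$ is odd) and that the diagram $B_m$ has no nontrivial symmetry, so that $G$ is generated by $G_0$ together with diagonal and field automorphisms, and $\dim V = n \geqs 7$.

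First I would dispose of the case where $n$ is composite. Then $n \geqs 9$ and $n$ has an odd prime divisor $k < n$ with $m = n/k \geqs 3$ (one may take the smallest prime divisor, so $k \leqs \sqrt{n}$ and hence $m \geqs 3$). Take $H \in \C_3$ to be a maximal field extension subgroup of $G$ of type ${\rm O}_m(q^k)$; consulting \cite{BHR} for $n \leqs 12$ (which here forces $n = 9$) and \cite{KL} for $n \geqs 13$ shows that $H$ is maximal in $G$, with at most a bounded set of small exceptions to be checked in {\sc Magma} \cite{magma}. Since $\dim V = n \geqs 9 > 6$, Proposition \ref{p:c3} gives $b(G,H) \leqs 3$, and we are done in this case.

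So the substance of the proof is the case where $n$ is a prime. Here I would take $H \in \C_2$ to be the stabiliser of an orthogonal direct sum decomposition $V = L_1 \perp \cdots \perp L_n$ of the natural module into nondegenerate $1$-spaces, so $H$ is of type ${\rm O}_1(q) \wr S_n$ (for $n$ prime this is the unique imprimitive type in $\C_2$, since any equidimensional decomposition has $n$ parts). Once again $H$ is maximal with a bounded list of small exceptions handled by {\sc Magma}. One then proceeds as in the proof of Lemma \ref{l:uni}, using the explicit bounds on $|x^G \cap H|$, $|x^G|$ and ${\rm fpr}(x,G/H)$ for prime-order $x$ recorded in the relevant propositions of \cite{Bur3} (these are the bounds used there to establish the estimate ${\rm fpr}(x,G/H) < |x^G|^{-1/2+1/n}$ of \cite{Bur1}). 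For $n \geqs 19$ this yields ${\rm fpr}(x,G/H) < |x^G|^{-1/2+1/n} < |x^G|^{-4/9}$ for all such $x$, so Proposition \ref{p:eta} gives $b(G,H) \leqs 3$. For the finitely many primes $n \in \{7,11,13,17\}$, where the exponent $-\tfrac12+\tfrac1n$ just exceeds $-\tfrac49$, I would instead apply Lemma \ref{l:basic}: arguing as in the proof of Lemma \ref{l:sp41}, bound the contribution to $\widehat{Q}(G,H,3)$ (see \eqref{e:qhat}) from the unipotent, semisimple and field automorphism elements of prime order — repeatedly using Lemma \ref{l:bound} together with the class-size and intersection estimates from \cite{Bur3} — and check that $\widehat{Q}(G,H,3) < 1$ for all $q$ above a small explicit bound, the remaining small $q$ being settled in {\sc Magma}. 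This gives $b(G,H) \leqs 3$ in every case, completing the proof.

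The hard part will be the prime-dimensional analysis for $n \in \{7,11,13,17\}$: here Proposition \ref{p:eta} is not quite available, so the probabilistic bound must be run by hand, which requires accurately accounting for the unipotent and semisimple $G$-classes meeting $H$ and for the field automorphisms inside $H$, and then verifying that the resulting bound on $\widehat{Q}(G,H,3)$ drops below $1$ once $q$ is large enough. For $n = 7$ an attractive alternative is to work instead with the maximal subgroup $G_2(q) < \O_7(q)$ and the known fixed-point-ratio estimates for that action, which may shorten the bookkeeping.
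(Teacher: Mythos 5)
Your strategy is genuinely different from the paper's. The paper does not split into composite and prime $n$ at all: for every odd $n\geqs 7$ it takes $H$ to be the $\C_1$-stabiliser of a nondegenerate $2m$-dimensional plus-type subspace (where $n=4m+1$ or $4m+3$), quotes the explicit size-two base $\{U,W\}$ for ${\rm SO}_n(q)$ constructed in the proof of \cite[Theorem 6.11]{BGL}, and then adjoins a third subspace $W'$ obtained by scaling one basis vector by a generator $\mu$ of $\mathbb{F}_q^{\times}$ to kill the field automorphisms. This is entirely constructive and needs no fixed point ratio estimates or {\sc Magma} computations. Your composite-$n$ branch via Proposition \ref{p:c3} is fine as far as it goes, but the prime-$n$ branch contains a genuine gap.

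The gap is the maximality of your chosen subgroup. A group of type ${\rm O}_1(q)\wr S_n$ is isomorphic to $2^n.S_n$ and can be realised by monomial matrices with entries in $\{0,\pm 1\}\subseteq \mathbb{F}_p$; consequently it is contained in a subfield subgroup of type ${\rm O}_n(p)$ whenever $q=p^f$ with $f\geqs 2$, and the conditions in \cite{KL} (see also the way the paper itself only ever invokes type ${\rm O}_1(q)\wr S_n$ when $q=5$, in Lemmas \ref{l:ort3} and \ref{l:ort4}) require $q=p$ for this $\C_2$-subgroup to be maximal. So for $n$ prime and $q$ a proper prime power your argument produces no maximal subgroup at all; this is an infinite family of groups, not ``a bounded list of small exceptions handled by {\sc Magma}''. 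To repair it you would either have to run a separate analysis for subfield subgroups ${\rm O}_n(q_0)$ in the spirit of Lemma \ref{l:g2} (estimating $\widehat{Q}(G,H,3)$ for unipotent, semisimple and field-automorphism classes), or abandon the $\C_2$-subgroup in favour of a subgroup that is maximal for all $q$, which is exactly what the paper's subspace-stabiliser construction achieves. A secondary, smaller point: your fallback for $n=7$ via $G_2(q)<\O_7(q)$ would also need a justification that the base size of that action is at most $3$, which is not among the results quoted in this paper.
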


\begin{proof}
Let $V$ be the natural module for $G_0$ and let $(\, ,\,)$ be the corresponding nondegenerate symmetric bilinear form on $V$. First assume $n = 4m+1$ and fix a standard basis
\[
\mathcal{B} = \{e_1,\dots,e_m,f_1\dots,f_m, e_1^{\ast},\dots,e_m^{\ast},f_1^{\ast},\dots,f_m^{\ast},x\}
\]
for $V$, where $(x,x)=1,$ $(e_i,f_i)=1$ and $(e_i^{\ast},f_i^{\ast})=1.$ We claim that $b(G,H) \leqs 3$, where $H$ is the stabiliser in $G$ of a $2m$-dimensional nondegenerate subspace of plus-type (recall that a nondegenerate $2m$-space is of \emph{plus-type} if it contains an $m$-dimensional totally singular subspace). We may identify $G/H$ with the set $\O$ of subspaces of $V$ of this form.

Following the proof of \cite[Theorem 6.11]{BGL}, set 
\begin{align*}
U & = \langle e_1,\dots,e_m,f_1,\dots,f_m \rangle \\
W & = \langle e_1+x, f_1+e_1^{\ast}, e_2+f_1^{\ast}, f_2+e_2^{\ast}, e_3+f_2^{\ast},\dots,e_m+f_{m-1}^{\ast}, f_m+e_{m}^{\ast} \rangle
\end{align*}
and note that $U,W \in \O$. The proof of \cite[Theorem 6.11]{BGL} shows that if $g \in {\rm SO}_{n}(q)$ fixes $U$ and $W$, then $g=1$ and thus $\{U,W\}$ is a base for $\O_n(q)$ and ${\rm SO}_{n}(q)$. Therefore, to complete the proof of the proposition for $n=4m+1$, we may assume $G$ contains field automorphisms.

Write $q=p^f$ with $f \geqs 2$ and let $\phi \in {\rm Aut}(G_0)$ be a standard field automorphism of order $f$, which is defined with respect to the above basis $\mathcal{B}$. In other words, if we take an arbitrary vector $v = a_1e_1 + \cdots + a_{n-1}f_m^{\ast} + a_{n}x \in V$, then
\[
v^{\phi} = a_1^pe_1 + \cdots + a_{n-1}^pf_m^{\ast} + a_{n}^px \in V.
\]
Fix a generator $\mu$ for $\mathbb{F}_q^{\times}$ and set 
\[
W' = \langle \mu e_1+x, f_1+e_1^{\ast}, e_2+f_1^{\ast}, f_2+e_2^{\ast}, e_3+f_2^{\ast},\dots,e_m+f_{m-1}^{\ast}, f_m+e_{m}^{\ast} \rangle.
\]
Then $W' \in \O$ and it is plain to see that $W'$ is not fixed by $\phi^i$ for any $1 \leqs i < f$. It follows that $\{U, W, W'\}$ is a base for ${\rm Aut}(G_0)$ and thus $b(G,H) \leqs 3$ as claimed.

A very similar argument applies when $n = 4m+3$. Here we take $\O$ to be the set of $(2m+1)$-dimensional nondegenerate subspaces $X$ of $V$ with the property that the orthogonal complement of $X$ in $V$ is a plus-type space. Fix a standard basis 
\[
\{ e_1,\dots,e_m,f_1\dots,f_m, e_1^{\ast},\dots,e_m^{\ast},f_1^{\ast},\dots,f_m^{\ast},e,f,x\}
\]
for $V$, where $(x,x)=1,$ $(e,f)=1,$ $(e_i,f_i)=1,$ and $(e_i^{\ast},f_i^{\ast})=1,$ and define $U,W \in \O$ as in the proof of \cite[Theorem 6.11]{BGL}. Then $\{U,W\}$ is a base for ${\rm SO}_{n}(q)$, and if we take 
\[
W' = \langle \mu e_1^{\ast}+x, e_1+f_1^{\ast}, f_1+e_2^{\ast},\dots, e_m+f_m^{\ast}, f_m+e 
\rangle
\]
where $\mathbb{F}_q^{\times} = \la \mu \ra$ as above, then $\{U,W,W'\}$ is a base for ${\rm Aut}(G_0)$. 
\end{proof}

\subsubsection{Even dimensional orthogonal groups}\label{sss:evenorth}

Here we complete the proof of Theorem \ref{t:main1} by handling the groups with socle $G_0 = {\rm P\O}_{n}^{\e}(q)$ with $n \geqs 8$ even. We begin by considering some special cases with $\e=+$.

\begin{lem}\label{l:ort1}
Let $G$ be an almost simple group with socle $G_0={\rm P\O}_{n}^{+}(q)$, where $n = 2^m$ and $m \geqs 3$. Then $\b(G) \leqs 3$. 
\end{lem}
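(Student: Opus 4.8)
The plan is to exploit the collection $\C_2$ of imprimitive subgroups that preserve an orthogonal decomposition of the natural module into totally singular subspaces, mirroring the strategy already used in Lemma~\ref{l:sp42} for $\mathrm{PSp}_4(q)$. Concretely, write $n = 2d$ with $d = 2^{m-1}$, fix a standard hyperbolic basis $\{e_1,\dots,e_d,f_1,\dots,f_d\}$ for the natural module $V$, and let $H$ be the stabiliser in $G$ of the decomposition $V = U \oplus W$ with $U = \la e_1,\dots,e_d\ra$ and $W = \la f_1,\dots,f_d\ra$, a maximal totally singular pair (this is a maximal $\C_2$-subgroup of type $\mathrm{GL}_d(q)$ for the relevant $G$, as recorded in \cite{KL}; the hypothesis $n = 2^m \geqs 8$ ensures maximality and avoids the low-dimensional exceptions). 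We may then identify $G/H$ with the set $\O$ of such totally singular decompositions $\{U',W'\}$. The goal, by the usual reduction, is to exhibit three members of $\O$ whose pointwise stabiliser in $\Aut(G_0)$ is trivial.

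First I would settle the inner/diagonal part. Following the pattern of \cite[Lemma~6.9]{BGL} and Lemma~\ref{l:sp42}, one computes $L_{\a}$ for $L = \mathrm{GO}_n^+(q)$ and $\a = \{U,W\} \in \O$: it consists of block matrices $\mathrm{diag}(A, A^{-T})$ and their "swap" analogues with $A \in \mathrm{GL}_d(q)$. Then one picks a second decomposition $\b = \{U',W'\} \in \O$ built from generic "mixed" vectors (something like $U' = \la e_1, e_2+f_2, e_3+f_3, \ldots\ra$, $W'$ a complementary totally singular space) so that an easy linear-algebra computation forces $L_{\a} \cap L_{\b} = Z(L)$. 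This gives $b(G,H) = 2$ whenever $G \leqs \mathrm{PGO}_n^+(q)$. The small cases (finitely many $q$, perhaps $q \leqs$ some small bound, for $n = 8, 16$) can be checked directly with {\sc Magma} to make the generic argument cleanly applicable.

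It then remains to handle the graph and field automorphisms. For field automorphisms, the device is exactly as in Lemma~\ref{l:sp42}: write $\Aut(G_0) = \mathrm{PGO}_n^+(q).\la \phi, \gamma\ra$ with $\phi$ a standard field automorphism of order $f$ defined with respect to $\mathcal{B}$, fix a generator $\mu$ of $\mathbb{F}_q^\times$, and produce a third decomposition $\gamma_0 = \{U'',W''\} \in \O$ (for instance replacing $e_1$ by $\mu e_1$ somewhere in $U'$) which is moved by $\phi^i$ for every $1 \leqs i < f$, so that $\{\a,\b,\gamma_0\}$ is a base. The graph automorphisms (the triality graph automorphism when $n=8$, and the ordinary order-$2$ graph automorphism in general) require more care, since they do not act "coordinatewise" in an obvious way and need not fix the $\C_2$-decomposition type — indeed for $\mathrm{P\Omega}_8^+(q)$ triality does not even preserve the class of $H$. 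I expect this to be the main obstacle: one must either choose $H$ and the three decompositions so that the graph automorphism already fails to fix $\a$ (handled by a separate small-rank computation for $n=8$, say via {\sc Magma}), or invoke the known primitive base-size results for the relevant $\C_2$-action from \cite{Bur3, BOW} together with the fixed-point-ratio machinery of Lemma~\ref{l:basic} and Proposition~\ref{p:eta} to absorb graph-type elements into a probabilistic bound $\widehat{Q}(G,H,3) < 1$. In short: the diagonal and field cases are a direct calculation modelled on the symplectic lemma, while the graph automorphisms in dimension $8$ are the delicate point and will likely be dispatched by a computer check combined with Proposition~\ref{p:eta}.
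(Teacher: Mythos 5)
There is a fatal obstruction to the first and central step of your plan. Let $H$ be the stabiliser in $L={\rm GO}_n^{+}(q)$ of a pair $\a=\{U,W\}$ of complementary maximal totally singular subspaces, so $H \approx ({\rm GL}_{n/2}(q).2) \times Z(L)$ has order roughly $q^{n^2/4}$, while $|L|\approx q^{n(n-1)/2}$. Thus $|H|^2/|Z(L)|$ exceeds $|L|$ by a factor of about $q^{n/2}$. Since $|L_{\a}\cap L_{\b}| \geqs |L_{\a}||L_{\b}|/|L|$ for any two points $\a,\b$ of this action (because $L_\a L_\b \subseteq L$), the pointwise stabiliser of \emph{any} two such decompositions has order at least of the order of $q^{n/2}$: roughly a maximal torus survives. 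Hence no choice of $\b$ can force $L_{\a}\cap L_{\b}=Z(L)$, your claim that $b(G,H)=2$ for $G\leqs {\rm PGO}_{n}^{+}(q)$ is false, and the strategy of adding one further point to defeat the field automorphisms collapses. The analogy with Lemma \ref{l:sp42} fails precisely here: for ${\rm Sp}_{4}(q)$ the Levi ${\rm GL}_{2}(q)$ has order $\approx q^{4}$, below the square root of $|{\rm Sp}_4(q)|\approx q^{10}$, whereas for ${\rm P\O}_{n}^{+}(q)$ the Levi ${\rm GL}_{n/2}(q)$ lies \emph{above} the square root of the group order, so a base of size $2$ is impossible in this action for every $q$ and every $n=2^m\geqs 8$.

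The paper sidesteps this by using non-degenerate $\C_2$-decompositions instead of totally singular ones: for $n=8$ it takes $H$ of type $O_{2}^{-}(q)\wr S_4$ (or $O_4^{+}(3)\wr S_2$ when $q=3$), which is soluble, and quotes \cite[Theorem 2]{Bur20}; for $n=2^m\geqs 16$ it takes $H$ of type $O_{n/2}^{+}(q)\wr S_2$ and runs the probabilistic argument through the bound \eqref{e:bd} and Proposition \ref{p:eta} (with the sharper estimate $\eta_G(4/15)<1$ needed when $m=4$). If you wish to retain the subgroup of type ${\rm GL}_{n/2}(q)$ you must aim directly for a three-point base — this is essentially what Lemma \ref{l:ort4} does for $q=2$ using the weaker fixed point ratio bound \eqref{e:c22} — but then you still face the issues you flag only in passing: this subgroup is not maximal for all $q$, it is not preserved by triality when $n=8$, and an explicit three-point computation (rather than a two-point one plus a field twist) would have to be carried out from scratch. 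As written, the proposal does not prove the lemma.
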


\begin{proof}
First assume $m=3$. If $q \ne 3$ then \cite[Table 8.50]{BHR}, which is reproduced from \cite{K}, indicates that $G$ has a maximal $\C_2$-subgroup of type $O_{2}^{-}(q) \wr S_4$. In particular, $H$ is soluble and \cite[Theorem 2]{Bur20} gives $b(G,H) \leqs 3$. Similarly, if $q=3$ then we can take a maximal subgroup $H$ of type 
$O_{4}^{+}(3) \wr S_2$; once again, $H$ is soluble and we apply \cite[Theorem 2]{Bur20}.

Now assume $m \geqs 4$. By \cite{KL}, $G$ has a maximal subgroup of type 
$O_{n/2}^{+}(q) \wr S_2$. If $x \in G$ has prime order, then by applying the main theorem of \cite{Bur1} we get
\begin{equation}\label{e:bd}
{\rm fpr}(x,G/H) < |x^G|^{-\frac{1}{2}+\frac{1}{n}},
\end{equation}
which is less than $|x^G|^{-4/9}$ if $m \geqs 5$. Therefore, Proposition \ref{p:eta} implies that $b(G,H) \leqs 3$ if $m \geqs 5$. 

Finally, suppose $m=4$ and let $x_1, \ldots, x_k$ be representatives of the conjugacy classes in $G$ of elements of prime order. Following \cite{B07}, set
\begin{equation}\label{e:eta}
\eta_G(t) = \sum_{i=1}^{k}|x_i^G|^{-t}
\end{equation}
with $t \in \mathbb{R}$. As recorded in \cite[Remark 2.3]{B07}, we have $\eta_G(4/15)<1$ and by applying the bound in \eqref{e:bd} we deduce that
\[
\widehat{Q}(G,H,3) = \sum_{i=1}^{k}|x_i^G| \!\cdot\! {\rm fpr}(x_i,G/H)^3 < \eta_G(-1+3/2-3/16) = \eta_G(5/16) < \eta_G(4/15)<1.
\]
This implies that $b(G,H) \leqs 3$ and the proof of the lemma is complete.
\end{proof}

\begin{lem}\label{l:ort2}
Let $G$ be an almost simple group with socle $G_0={\rm P\O}_{10}^{+}(q)$ and let $H$ be a $\C_2$-subgroup of type $O_{2}^{+}(q) \wr S_5$. If $q \geqs 8$, then $b(G,H) \leqs 3$.
\end{lem}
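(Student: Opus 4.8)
The plan is to bound $b(G,H)$ via the probabilistic method of Lemma \ref{l:basic}, following the template of Lemmas \ref{l:g2} and \ref{l:sp41}: it suffices to prove $\widehat{Q}(G,H,3) < 1$ (see \eqref{e:qhat}), and we estimate $\widehat{Q}(G,H,3)$ by controlling the contribution of each $G$-class of elements of prime order. Here $H$ is the stabiliser of an orthogonal decomposition $V = V_1 \perp \cdots \perp V_5$ of the natural module into five hyperbolic planes, so $G/H$ may be identified with the set of all such decompositions, and for $x \in G$ the quantity $|x^G \cap H|$ counts the decompositions of this shape fixed by $x$.

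For the bulk of the prime-order elements the generic fixed-point-ratio estimate $\fpr(x,G/H) < |x^G|^{-1/2+1/10}$ coming from the main theorem of \cite{Bur1} is more than adequate. Indeed, as in the proof of Proposition \ref{p:eta} (which combines Lemma \ref{l:basic} with \cite[Proposition 2.2]{B07}), as soon as $\fpr(x,G/H) < |x^G|^{-4/9}$ the associated terms of $\widehat{Q}(G,H,3)$ are bounded above by $\eta_G(1/3)$, with $\eta_G$ as in \eqref{e:eta}, and arguing along the lines of \cite[Remark 2.3]{B07} one checks that $\eta_G(1/3) < 1$ when $q \geq 8$, with slack to spare. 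The real work is therefore with the ``large'' prime-order elements, namely those for which the generic bound $|x^G|^{-1/2+1/10}$ fails to beat the threshold $|x^G|^{-4/9}$. For a $\mathcal{C}_2$-subgroup of imprimitive type these are precisely the elements isolated in the fixed-point-ratio analysis of \cite[Section 2]{Bur3}: involutory reflections, together with the semisimple elements (and, when $p=2$, the unipotent elements) lying in the base group $O_2^+(q)^5$ that act as $\pm 1$ on several of the constituent planes $V_i$. For each such class I would record an explicit upper bound for $|x^G \cap H|$ — obtained by combining the number of relevant elements inside a single factor $O_2^+(q)$ with the permutation action of $S_5$ on the five factors — together with the standard lower bound for $|x^G|$, and then apply Lemma \ref{l:bound} to bound the total contribution of these classes. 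This contribution tends to $0$ as $q \to \infty$, so one verifies directly that $\widehat{Q}(G,H,3) < 1$ for $q \geq 8$, and hence $b(G,H) \leqs 3$ by Lemma \ref{l:basic}; the cases $q \leqs 7$ fall outside the scope of the lemma and are handled elsewhere.

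The main obstacle is obtaining counts of $|x^G \cap H|$ that are sharp enough for this handful of exceptional classes, and in particular for the semisimple elements negating just one or two of the planes $V_i$ and centralising the remaining factors: such an element fixes a very large number of decompositions in $G/H$ (every refinement of its eigenspace decomposition into hyperbolic planes is fixed), so its fixed-point ratio is comparatively large, and only a careful argument exploiting the precise structure of $H = O_2^+(q) \wr S_5$ keeps the cubed contribution under control. It is exactly at this point that the hypothesis $q \geq 8$ is needed to make the final numerical inequality hold.

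As an alternative, more constructive route — in the spirit of Section \ref{s:sym} and Proposition \ref{p:ortodd} — one could instead exhibit three explicit orthogonal decompositions of $V$ into five hyperbolic planes whose common stabiliser in $\Aut(G_0)$ is trivial: first argue combinatorially that any stabilising element fixes each plane in each decomposition, then use the pairwise intersections of planes from different decompositions to force it to act trivially on every plane, and finally rule out field automorphisms by arranging one decomposition to involve a primitive element of $\mathbb{F}_q^{\times}$. Here too the condition $q \geq 8$ would enter, through the need for $\mathbb{F}_q^{\times}$ to be large enough. I would reach for the probabilistic argument first, since the dimension is fixed and small.
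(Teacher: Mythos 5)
Your overall strategy coincides with the paper's: bound $\widehat{Q}(G,H,3)$ via Lemmas \ref{l:basic} and \ref{l:bound}, dispatching most classes crudely and reserving sharp counts of $|x^G\cap H|$ for a few dangerous ones. However, the first step of your probabilistic route fails as written. The generic bound from \cite{Bur1} for this action has exponent $-\frac{1}{2}+\frac{1}{10}=-\frac{2}{5}$, and since $-\frac{2}{5}>-\frac{4}{9}$ the estimate ${\rm fpr}(x,G/H)<|x^G|^{-2/5}$ \emph{never} implies the threshold ${\rm fpr}(x,G/H)<|x^G|^{-4/9}$ of Proposition \ref{p:eta}. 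Your dichotomy between ``bulk'' classes (handled by the generic bound) and ``large'' classes is therefore empty, and feeding the generic bound directly into $\widehat{Q}$ only yields $\widehat{Q}(G,H,3)<\eta_G(1/5)$; since $\eta_G(t)$ is decreasing in $t$ and $1/5<4/15$, the available estimate $\eta_G(4/15)<1$ gives no information here. The whole argument must therefore proceed by explicit class-by-class estimates, which is what the paper does: from $|H|\leqs \log_2q\cdot 2^5(q-1)^5\,5!$, Lemma \ref{l:bound} disposes of every class with $|x^G|>q^{14}$ in one stroke, and one then checks that the only prime order elements with $|x^G|\leqs q^{14}$ are the reflections $[-I_1,I_9]$ (for $p$ odd) and the $b_1$-involutions (for $p=2$), for which $|x^G\cap H|\leqs 5(q-1)$ while $|x^G|>\frac{1}{4}q^{9}$.

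Relatedly, the class you single out as the ``main obstacle'' --- semisimple involutions negating one or two of the planes $V_i$ --- is in fact harmless: such an element has $|x^G|$ of order at least $q^{16}>q^{14}$, so the crude $|H|$ bound already controls it and no careful count is needed. The genuinely delicate class is the reflection, whose $G$-class has size only about $q^9$, where the crude bound is hopeless; your recipe (count the relevant elements in a single factor $O_2^+(q)$ and multiply by the number of factors) does produce the required bound $5(q-1)$, and $q\geqs 8$ is exactly what makes the final numerical inequality close (the crude term exceeds $1$ at $q=7$). So the plan is repairable and lands on the paper's proof, but the division of labour between the ``generic'' and ``exceptional'' parts needs to be reorganised as above. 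The alternative constructive route you sketch is not pursued in the paper and would require substantial additional work.
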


\begin{proof}
Write $q=p^f$ with $p$ a prime and let $H$ be the stabiliser in $G$ of an orthogonal decomposition
\begin{equation}\label{e:sum}
V = V_1 \perp V_2 \perp V_3 \perp V_4 \perp V_5
\end{equation}
of the natural module $V$, where each $V_i$ is a nondegenerate $2$-space of plus-type. We will assume $q \geqs 8$, in which case $H$ is a maximal subgroup of $G$ by \cite[Table 8.66]{BHR}.
In view of Lemma \ref{l:basic}, it suffices to show that $\widehat{Q}(G,H,3)<1$. Let $x \in G$ be an element of prime order $r$.

First observe that $|H| \leqs \log_2q.2^5(q-1)^55! = u_1$, so Lemma \ref{l:bound} implies that the contribution to $\widehat{Q}(G,H,3)$ from the elements with $|x^G| > q^{14}=v_1$ is less than $v_1(u_1/v_1)^3$. 

For the remainder, we may assume $|x^G| \leqs q^{14}$. If $x$ is a field or graph-field automorphism, then \cite[Lemma 3.48]{Bur2} gives $|x^G|>\frac{1}{4}q^{45/2}$. Therefore the condition on $|x^G|$ implies that $x \in {\rm PGO}_{10}^{+}(q)$.  Without loss of generality, we may assume that $x \in H$, so $x$ stabilises the decomposition in \eqref{e:sum}.

Suppose $r=p$. If $p \geqs 3$ then $x$ acts as a $3$-cycle or a $5$-cycle on the set of summands in \eqref{e:sum}, so $p \in \{3,5\}$ and $x$ has Jordan form $[J_3^2,J_1^4]$ or $[J_5^2]$ on $V$ in the respective cases (here $J_i$ denotes a standard unipotent Jordan block of size $i$). In both cases, the order of $C_{G}(x)$ can be read off from \cite[Lemma 3.18]{Bur2} and it is easy to see that $|x^G|>q^{14}$. 

Now assume $r=p=2$. We adopt the standard notation for unipotent involutions from \cite{AS}. If $x$ is of type $b_1$, then $|x^G|>\frac{1}{2}q^9$ (see \cite[Proposition 3.22]{Bur2}) and we see that the elements in $x^G \cap H$ correspond to involutions in $O_{2}^{+}(q)^5$ of the form $(y,1,1,1,1)$, up to permutations. Therefore, $|x^G \cap H| \leqs 5(q-1)$. For all other unipotent involutions, one checks that $|x^G| > q^{14}$. Indeed, if $x$ is an $a_2$-type involution, then
\[
|x^G| = \frac{|O_{10}^{+}(q)|}{q^{13}|O_{6}^{+}(q)||{\rm Sp}_{2}(q)|} = (q^5-1)(q^4+1)(q^3+1)(q^2+1)>q^{14}
\]
(see \cite[Table 3.5.1]{BG_book}) and the bounds on $|x^G|$ presented in the proof of \cite[Proposition 3.22]{Bur2} are sufficient in the remaining cases.

Finally, let us assume $r \ne p$, so $x$ is semisimple. If $r=2$ then the condition on $|x^G|$ implies that $x$ acts as a reflection on $V$, with eigenvalues $[-I_{1},I_{9}]$. Here $|x^G|>\frac{1}{4}q^{9} = v_2$ and we note that $|x^G \cap H| \leqs 5(q-1)=u_2$. On the other hand, if $r \geqs 3$ then 
\[
|x^G| \geqs \frac{|O_{10}^{+}(q)|}{|O_{8}^{-}(q)|{\rm GU}_{1}(q)|} >\frac{1}{2}q^{16}
\]
and so none of these elements satisfy the bound $|x^G| \leqs q^{14}$.

By bringing the above estimates together, we conclude that
\[
\widehat{Q}(G,H,3) < v_1(u_1/v_1)^3 + v_2(u_2/v_2)^3 < 1
\]
and the result follows.
\end{proof}

\begin{lem}\label{l:ort3}
Let $G$ be an almost simple group with socle $G_0={\rm P\O}_{10}^{+}(q)$. Then $\b(G) \leqs 3$. 
\end{lem}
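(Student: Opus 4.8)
The plan is to produce, for every such $G$, a core-free maximal subgroup $H$ with $b(G,H) \leqs 3$. Since $\frat(G) = 1$ for an almost simple group (indeed, $\frat(G)$ is a nilpotent normal subgroup of $G$, hence trivial), such an $H$ lies in $\mathcal{M}^*$ and we obtain $\b(G) \leqs 3$ at once.

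The subgroup to use is a $\C_2$-subgroup $H$ of type $O_2^+(q) \wr S_5$, that is, the stabiliser in $G$ of an orthogonal decomposition of the natural module $V$ into five nondegenerate $2$-spaces of plus-type. For $q \geqs 8$ I would read off from \cite[Table 8.66]{BHR} that this subgroup is maximal in $G$ — checking in particular that no relevant novelty maximal subgroups arise and that $H$ is not contained in a larger geometric subgroup — and then Lemma \ref{l:ort2} gives $b(G,H) \leqs 3$ directly, so $\b(G) \leqs 3$ in this range.

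This leaves the finitely many groups with $q \leqs 7$, i.e. $q \in \{2,3,4,5,7\}$, which I would treat computationally with {\sc Magma}. Working inside the relevant matrix group, one constructs three explicit subspace decompositions of a suitable type and verifies that the intersection of their stabilisers in $G$ is trivial; this shows $b(G,H) \leqs 3$ for a suitable core-free maximal subgroup $H$, and hence $\b(G) \leqs 3$.

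The substantive content has already been isolated in Lemma \ref{l:ort2} (the fixed-point-ratio estimates forcing $\widehat{Q}(G,H,3) < 1$), so the present lemma is largely bookkeeping. The only real obstacles are confirming that the $\C_2$-subgroup of type $O_2^+(q) \wr S_5$ is genuinely maximal for all $q \geqs 8$ and for every choice of $G$ between $G_0$ and $\Aut(G_0)$, and carrying out the routine but slightly delicate {\sc Magma} computations for the small values of $q$, where $|G_0|$ is already fairly large.
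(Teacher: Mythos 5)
Your handling of $q \geqs 8$ is exactly the paper's: take the maximal $\C_2$-subgroup of type $O_2^+(q) \wr S_5$ and quote Lemma \ref{l:ort2}. The gap is in your treatment of the remaining cases. You propose to dispose of all of $q \in \{2,3,4,5,7\}$ by a uniform {\sc Magma} computation, namely constructing three decompositions and intersecting their stabilisers inside the matrix group. For $q=7$ the group ${\rm P\O}_{10}^{+}(7)$ has order roughly $10^{38}$ and the coset space $G/H$ has degree of the order of $10^{30}$; neither a permutation-domain computation nor a naive intersection of stabilisers in the matrix group is realistic there, and indeed the paper itself reports (in the proof of Lemma \ref{l:ort4}, for $O_{26}^{+}(2)$) that such intersection computations can simply fail. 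The paper avoids this entirely: for $q \equiv 3 \imod{4}$ (which covers $q=3$ and $q=7$) it uses the maximal $\C_3$-subgroup of type $O_5(q^2)$ and Proposition \ref{p:c3}, with no computation at all. You should incorporate that case, both because it eliminates the infeasible computations and because the $\C_2$-subgroup you rely on is not available or not maximal for all small $q$.

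Even for the genuinely computational cases $q \in \{2,4,5\}$ the paper's verifications are more tailored than what you describe: for $q=2$ with $G=G_0$ it quotes \cite[Theorem 6.13]{BGL} and only computes for $G = O_{10}^{+}(2)$ with a $\C_2$-subgroup of type ${\rm GL}_5(2)$; for $q=4$ it builds ${\rm Aut}(G_0)$ as a permutation group of degree $487637$, realises a maximal subfield subgroup $O_{10}^{+}(2)\times 2$ as an involution centraliser, and finds a base of size $3$ by random search; for $q=5$ it exhibits a single $x$ with $H \cap H^x = 1$ (so $b(G,H)=2$) for a $\C_2$-subgroup of type $O_2^{+}(5)\wr S_5$ or $O_1(5)\wr S_{10}$ according to whether $G \leqs {\rm PO}_{10}^{+}(5)$. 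So the structure of your argument is right, but the small-$q$ endgame as written would not go through without replacing the $q=3,7$ computations by the $\C_3$ argument and being much more specific about how the $q=2,4,5$ checks are actually carried out.
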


\begin{proof}
If $q \equiv 3 \imod{4}$, then \cite[Table 8.66]{BHR} indicates that $G$ has a maximal subgroup of type $O_5(q^2)$. Since $H$ is contained in the collection $\C_3$, we deduce that $b(G,H) \leqs 3$ 
by Proposition \ref{p:c3}. Similarly, if $q \geqs 8$ then $G$ has a maximal $\C_2$-subgroup $H$ of type $O_{2}^{+}(q) \wr S_5$ and Lemma \ref{l:ort2} gives $b(G,H) \leqs 3$. Therefore, to complete the proof of the lemma, we may assume that $q \in \{2,4,5\}$.

Suppose $q = 2$. If $G = \O_{10}^{+}(2)$ then we may apply \cite[Theorem 6.13]{BGL}, so we can assume $G = O_{10}^{+}(2)$. Here $G$ has a maximal $\C_2$-subgroup of type ${\rm GL}_{5}(2)$ and with the aid of {\sc Magma} it is easy to check that $b(G,H) \leqs 3$.

Next assume $q=4$. Using {\sc Magma}, we can construct $A = {\rm Aut}(G_0)$ as a permutation group of degree $487637$ and we can find an involution $y \in A$ such that $B = C_A(y) = O_{10}^{+}(2) \times 2$ is a maximal subfield subgroup of $A$. Then by random search, we find elements $x_1,x_2 \in G_0$ such that $B \cap B^{x_1} \cap B^{x_2} = 1$. This implies that $b(A,B) \leqs 3$ and it follows that $b(G,H) \leqs 3$ for $H \in \C_5$ of type $O_{10}^{+}(2)$.  

Finally, let us assume $q=5$. By inspecting \cite[Table 8.66]{BHR}, we observe that $G$ has a maximal $\C_2$-subgroup $H$ of type $O_2^{+}(5) \wr S_5$ or $O_1(5) \wr S_{10}$ (more precisely, the latter subgroups are maximal if $G \leqs {\rm PO}_{10}^{+}(5)$ and the former when $G \not\leqs {\rm PO}_{10}^{+}(5)$). Using the \texttt{ClassicalMaximals} function in {\sc Magma}, we can construct $H$ as a subgroup of ${\rm PGO}_{10}^{+}(5) = {\rm Aut}(G_0)$ and in both cases we find an element $x \in G_0$ such that $H \cap H^x = 1$. This implies that $b(G,H) = 2$ and the result follows.
\end{proof}

\begin{lem}\label{l:ort4}
Let $G$ be an almost simple group with socle $G_0={\rm P\O}_{n}^{+}(q)$, where $n = 2k$ and $k \geqs 7$ is a prime. Then $\b(G) \leqs 3$. 
\end{lem}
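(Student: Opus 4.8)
The plan is to follow the same strategy used for the other special orthogonal cases: exhibit a suitable maximal subgroup $H$ and show $b(G,H)\leqs 3$, where possible by quoting Proposition \ref{p:c3} and otherwise by the probabilistic estimate of Lemma \ref{l:basic}. Since $n=2k$ with $k\geqs 7$ prime, the most natural candidate is a $\C_2$-subgroup $H$ of type $O_k(q)\wr S_2$ (the stabiliser of an orthogonal decomposition $V=V_1\perp V_2$ into two nondegenerate $k$-spaces); such an $H$ is maximal in $G$ by \cite{KL} since $k\geqs 7$. First I would dispose of the case $q$ odd with $q\equiv 3\imod 4$, or whatever congruence makes $G$ contain a $\C_3$-subgroup of type $O_k(q^2)$ (a field extension subgroup of degree $2$): here $\dim V=n\geqs 14\geqs 6$, so Proposition \ref{p:c3} applies immediately and gives $b(G,H)\leqs 3$. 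This reduces us to the remaining congruence classes of $q$ and to $q$ even.

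For the main case I would invoke the fixed point ratio bound. By the main theorem of \cite{Bur1} — exactly as used in Lemma \ref{l:ort1} — any element $x\in G$ of prime order satisfies
\[
{\rm fpr}(x,G/H) < |x^G|^{-\frac12+\frac1n},
\]
and since $n=2k\geqs 14$ we have $-\frac12+\frac1n\leqs -\frac12+\frac1{14}<-\frac49$, so the hypothesis of Proposition \ref{p:eta} is met and $b(G,H)\leqs 3$ follows at once. (If one prefers not to route through Proposition \ref{p:eta}, the same conclusion comes from Lemma \ref{l:basic} together with $\eta_G(t)<1$ for a suitable $t$ in the spirit of \eqref{e:eta}; but the inequality $\frac1n\leqs\frac1{14}<\frac1{18}$ already gives the clean $-4/9$ bound, so this should be unnecessary for $k\geqs 7$.)

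Finally there will be a short list of genuinely small cases to clean up by hand — when $q$ is too small for the estimates above to bite, or when $H$ fails to be of the expected type (for instance $q\in\{2,3,4,5\}$, where one might instead use a $\C_1$-subgroup such as a ${\rm GL}_k(q)$-type stabiliser of a totally singular $k$-space, or a subfield subgroup). These I would settle with a direct computation in {\sc Magma}, exhibiting elements $x_1,x_2\in G_0$ with $H\cap H^{x_1}\cap H^{x_2}=1$, just as in Lemma \ref{l:ort3}. I expect the main obstacle to be purely bookkeeping: confirming from \cite{KL} (and \cite{BHR} for the small ranks that arise when $k=7$, i.e.\ $n=14$) that the $O_k(q)\wr S_2$ subgroup is actually maximal in every relevant almost simple $G$, including those containing graph or field automorphisms, and pinning down precisely which small $q$ must be handled computationally. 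The analytic content is entirely routine given \cite{Bur1} and Proposition \ref{p:eta}.
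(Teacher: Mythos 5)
Your overall strategy (pick a non-subspace maximal subgroup $H$, apply the fixed point ratio bound of \cite{Bur1}, clean up small cases by computer) is the paper's strategy, but there are two genuine gaps in the execution. The first is arithmetic: Proposition \ref{p:eta} needs ${\rm fpr}(x,G/H)<|x^G|^{-4/9}$, and $-\tfrac12+\tfrac1n\leqs-\tfrac49$ holds only for $n\geqs 18$. For $k=7$, i.e. $n=14$, you have $\tfrac1{14}>\tfrac1{18}$ (your parenthetical inequality is reversed), so $-\tfrac12+\tfrac1{14}=-\tfrac37>-\tfrac49$ and Proposition \ref{p:eta} does not apply. The route you dismiss as unnecessary is the one that actually works and is what the paper does: \eqref{e:bd} gives $\widehat{Q}(G,H,3)<\eta_G(1/2-3/n)=\eta_G(2/7)$ for $n=14$, and $\eta_G(2/7)<\eta_G(4/15)<1$ by \cite[Remark 2.3]{B07}, so Lemma \ref{l:basic} applies. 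This is repairable, but as written your main analytic step fails exactly at the smallest value of $k$.

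The second gap is more serious: the subgroup $H$ of type $O_k(q)\wr S_2$ does not exist when $q$ is even, because $k$ is odd and the $\C_2$-collection of $O_{2k}^{+}(q)$ contains no subgroups of type $O_m(q)\wr S_t$ with $m$ odd in even characteristic (see \cite[Table 3.5.E]{KL}). (For $q$ odd, a decomposition into two isometric nondegenerate $k$-spaces has square discriminant and hence plus type only when $q\equiv 1\imod 4$; this is consistent with your plan to send $q\equiv 3\imod 4$ to a $\C_3$-subgroup of type $O_k(q^2)$, but it means your "main case" covers only $q\equiv 1\imod 4$.) Consequently your fallback of a {\sc Magma} computation for small $q$ does not address $q\in\{8,16,32,\dots\}$ — where the paper instead takes type $O_2^{+}(q)\wr S_{n/2}$ for $q\geqs 7$ and a subfield subgroup for $q=4$ — and it certainly does not dispose of $q=2$, which is the genuinely hard case. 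For $q=2$ the only available non-subspace geometric subgroup is the $\C_2$-subgroup ${\rm GL}_{n/2}(2).2$, which is an exception to \eqref{e:bd} and satisfies only the weaker bound \eqref{e:c22}; that bound suffices for $k\geqs 17$, but $k\in\{7,11,13\}$ need separate arguments (subspace stabilisers $O_{k-1}^{+}(2)\times O_{k+1}^{+}(2)$ plus random search for $k\in\{7,11\}$, and a sharper estimate $\eta_G(1/16)<1$ for $O_{26}^{+}(2)$, whose intersections the authors report they could not compute directly). None of this is absorbed by the bookkeeping you anticipate.
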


\begin{proof}
Suppose $G$ has a maximal subgroup $H$ that is not contained in the collection $\C_1$ (in other words, $H$ is a \emph{non-subspace} subgroup). In addition, let us assume for now that $H$ is not a $\C_2$-subgroup of type ${\rm GL}_{n/2}(q)$. For $t \in \mathbb{R}$, define $\eta_G(t)$ as in \eqref{e:eta} and recall that $\eta_G(4/15)<1$ (see \cite[Remark 2.3]{B07}). Then the main theorem of \cite{Bur1} implies that \eqref{e:bd} holds for all $x \in G$ of prime order, which in turn implies that 
\[
\widehat{Q}(G,H,3) < \eta_G(-1+3/2-3/14) = \eta_G(2/7) < \eta_G(4/15) < 1
\]
since $n \geqs 14$. Therefore $b(G,H) \leqs 3$.

If $q \geqs 7$, then by inspecting \cite{KL} we see that $G$ has a maximal $\C_2$-subgroup of type $O_{2}^{+}(q) \wr S_{n/2}$. Similarly, if $q=4$ then we can take a subfield subgroup of type $O_{n}^{+}(2)$, while for $q=3$ we can work with a $\C_3$-subgroup of type $O_{n/2}(q^2)$. Therefore, it remains to consider the cases $q \in \{2,5\}$.

Suppose $q=5$. By carefully inspecting the relevant tables in \cite[Chapter 3]{KL}, we deduce that $G$ has a maximal $\C_2$-subgroup $H$ of type $O_1(5) \wr S_n$ if $G \leqs {\rm PO}_{n}^{+}(5)$, and one of type $O_{2}^{+}(q) \wr S_{n/2}$ in the remaining cases. Therefore $b(G,H) \leqs 3$ and the result follows.

Finally, let us assume $q=2$. In view of \cite[Theorem 6.13]{BGL}, we may assume that $G = O_{n}^{+}(2)$. Let $H = {\rm GL}_{n/2}(2).2$ be a maximal $\C_2$-subgroup of $G$. Here the main theorem of \cite{Bur1} gives
\begin{equation}\label{e:c22}
{\rm fpr}(x,G/H) < |x^G|^{-\frac{1}{2}+\frac{1}{n}+\frac{1}{n-2}}
\end{equation}
for all $x \in G$ of prime order. As a consequence, if $k \geqs 17$ then 
\[
\widehat{Q}(G,H,3) < \eta_G(-1+3/2-3/34-3/32) = \eta_G(173/544) < \eta_G(4/15) < 1
\]
and thus $b(G,H) \leqs 3$. Therefore, we may assume that $k \in \{7,11,13\}$. 

Let $V$ be the natural module for $G$, let $U$ be a nondegenerate $(k-1)$-space of plus-type and let $H = O_{k-1}^{+}(2) \times O_{k+1}^{+}(2)$ be the stabiliser of $U$ in $G$. Then $H$ is a maximal subgroup of $G$. For $k \in \{7,11\}$, we can use {\sc Magma} to construct $H$ as a subgroup of $G$ (as matrix groups) and we can then use random search to find elements $x_1, x_2 \in G$ such that $H \cap H^{x_1} \cap H^{x_2} =1$. This implies that $b(G,H) \leqs 3$. 

Finally, suppose $G = O_{26}^{+}(2)$. Here we can consider the same approach and construct a maximal subgroup $H = O_{12}^{+}(2) \times O_{14}^{+}(2)$. However, we have been unable to compute the size of an intersection $H \cap H^{x_1} \cap H^{x_2}$, so we use a different method to handle this case. Define the zeta function $\eta_G(t)$ as above. Given the detailed information on conjugacy classes and centralisers in \cite[Section 3.5]{BG_book}, it is possible to calculate the size of each conjugacy class in $G$ containing elements of prime order and this allows us to  deduce that $\eta_G(1/16)<1$. Therefore, if we take a maximal $\C_2$-subgroup $H = {\rm GL}_{13}(2).2$, then \eqref{e:c22} holds for all $x \in G$ of prime order and thus
\[
\widehat{Q}(G,H,3) < \eta_G(-1+3/2-3/26-3/24) = \eta_G(27/104) < \eta_G(1/16) < 1.
\]
Therefore $b(G,H) \leqs 3$ and the proof of the lemma is complete.
\end{proof}

\begin{prop}\label{p:orteven}
Let $G$ be an almost simple group with socle $G_0 = {\rm P\O}_{n}^{\e}(q)$, where $n \geqs 8$ is even. Then $\b(G) \leqs 3$.
\end{prop}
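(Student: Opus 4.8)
The plan is to reduce Proposition \ref{p:orteven} to the various special cases already handled by Lemmas \ref{l:ort1}--\ref{l:ort4}, together with the non-subspace machinery of Proposition \ref{p:eta} and the $\C_3$ result of Proposition \ref{p:c3}. Write $n=2m$ with $m\geqs 4$ and $q=p^f$. The first step is to dispose of the minus-type case $\e=-$: here $G$ always has a maximal $\C_3$-subgroup of type ${\rm GO}^{-}_{n/2}(q^2)$ when $n/2$ is even (which must be treated), or a $\C_2$-subgroup of type ${\rm GU}_{n/2}(q)$ of field-extension flavour, or more uniformly a non-subspace subgroup $H$ for which the fixed point ratio bound $\fpr(x,G/H)<|x^G|^{-4/9}$ holds for all prime order $x$ by the main theorem of \cite{Bur1} (using $\eta_G(4/15)<1$ as in \cite[Remark 2.3]{B07}); Proposition \ref{p:eta} then gives $b(G,H)\leqs 3$. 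The bound in \cite{Bur1} degrades like $|x^G|^{-1/2+1/n}$, so one needs $n$ sufficiently large (roughly $n\geqs 18$) for $-1/2+1/n\leqs -4/9$ to hold cleanly, and small values of $n$ in both parities will have to be inspected individually.

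For the plus-type case $\e=+$, I would split on $n$ according to its factorisation, paralleling the symplectic and unitary arguments earlier in the section. If $n=2^m$ with $m\geqs 3$, apply Lemma \ref{l:ort1} directly. If $n=2k$ with $k\geqs 7$ a prime, apply Lemma \ref{l:ort4}. The case $n=10$ (i.e. $k=5$) is Lemma \ref{l:ort3}. The remaining values of $n$ are those with $n/2$ composite but not a $2$-power, so $n/2$ has an odd prime divisor $\ell$; here $G$ contains a maximal $\C_3$-subgroup of type ${\rm GO}^{+}_{n/\ell}(q^\ell)$ (or, when $n/2$ is even, of type ${\rm GO}_{n/2}(q^2)$), and Proposition \ref{p:c3} applies since $n\geqs 12$ forces $\dim V\geqs 6$. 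One should double-check the existence and maximality of these field-extension subgroups against \cite{BHR} for $n\leqs 12$ and \cite{KL} for $n>12$, and handle any genuinely small exceptional configurations — for instance small $q$ where the relevant geometric subgroup fails to be maximal — with a direct {\sc Magma} computation, exactly as was done for $q\in\{2,4,5\}$ in Lemmas \ref{l:ort2} and \ref{l:ort3}.

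Finally, I would assemble the pieces: for each $(n,\e,q)$ with $n\geqs 8$ even, either one of the lemmas above applies verbatim, or a non-subspace subgroup satisfies the $\fpr<|x^G|^{-4/9}$ hypothesis of Proposition \ref{p:eta}, or a $\C_3$-subgroup is available and Proposition \ref{p:c3} gives $b(G,H)\leqs 3$; in the handful of leftover small cases one verifies $b(G,H)\leqs 3$ computationally. Since $\a(G)\leqs\b(G)\leqs b(G,H)\leqs 3$ in every case, the proposition follows, and (unlike the symplectic and unitary situations) there are no genuine exceptions forcing $\b(G)=4$.

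I expect the main obstacle to be the bookkeeping for the plus-type groups of dimension $n$ with $n/2$ composite: ensuring that a suitable field-extension or subfield subgroup is actually maximal for \emph{every} relevant $q$, rather than just generically, and isolating precisely which small pairs $(n,q)$ need a direct computation. The fixed point ratio estimates and the $\eta_G$-function inequalities are routine given \cite{Bur1,B07}, and the real work is the careful case division by the arithmetic of $n$ so that every $G$ is covered by exactly one of Lemmas \ref{l:ort1}--\ref{l:ort4}, Proposition \ref{p:eta}, Proposition \ref{p:c3}, or an explicit check.
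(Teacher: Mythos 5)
Your proposal is correct and follows essentially the same route as the paper: peel off the cases where $n$ has an odd prime divisor $k$ with $n/k \geqs 4$ (and all of $\e=-$) using maximal $\C_3$ field-extension subgroups together with Proposition \ref{p:c3}, then invoke Lemmas \ref{l:ort1}, \ref{l:ort3} and \ref{l:ort4} for the remaining plus-type cases $n=2^m$, $n=10$ and $n=2k$ with $k\geqs 7$ prime. The only slips are cosmetic: the subgroup of type ${\rm GU}_{n/2}(q)$ in the minus-type case lies in $\C_3$, not $\C_2$, and the detour through Proposition \ref{p:eta} (which needs roughly $n\geqs 18$) and the anticipated extra {\sc Magma} checks are unnecessary, since the relevant $\C_3$-subgroups are maximal for all $q$.
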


\begin{proof}
Suppose $n$ is divisible by an odd prime $k$ with $n/k \geqs 4$. Then by inspecting \cite{BHR,KL} we see that $G$ has a maximal $\C_3$-subgroup $H$ of type $O_{n/k}^{\e}(q^k)$ and Proposition \ref{p:c3} gives $b(G,H) \leqs 3$. Therefore, we may assume that $n = 2^m$ or $2k$, where $m \geqs 3$ and $k \geqs 5$ is a prime. 

If $\e=-$ then $G$ has a maximal $\C_3$-subgroup $H$ of type $O_{n/2}^{-}(q^2)$ if $n=2^m$ and type ${\rm GU}_{n/2}(q)$ if $n=2k$ with $k$ prime. In both cases, we now apply Proposition \ref{p:c3} as before. Finally, for $\e=+$ we apply Lemmas \ref{l:ort1}, \ref{l:ort3} and \ref{l:ort4}.
\end{proof}

\vs

By combining Propositions \ref{p:alt}, \ref{p:spor}, \ref{p:ex} and \ref{p:class} with Corollary \ref{c:alpha1}, we conclude that the proof of Theorem \ref{t:main1} is complete.

\section{Soluble groups}\label{s:sol}

In this section we focus on the intersection number of finite soluble groups and we prove Theorem \ref{t:main3}, which can be viewed as a generalisation of \cite[Theorem 3.3]{Archer} on nilpotent groups.

We begin by recalling a result due to Wolf (see \cite[Theorem A]{wolf}).

\begin{thm}\label{wo}
Let $G$ be a finite supersoluble group and let $V$ be a faithful completely reducible $G$-module. Then there exist $x,y \in V$ such that $C_G(x) \cap C_G(y)=1.$
\end{thm}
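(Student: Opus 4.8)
The statement asserts that $G$ has a regular orbit on $V\oplus V$, and the plan is to prove this by induction on $|G|+\dim_{\mathbb F}V$ (for a module of mixed characteristic, replace $\dim_{\mathbb F}V$ by the total dimension of its homogeneous summands over their respective fields of definition). First I would reduce to the case that $V$ is faithful and irreducible: if $V=W_1\oplus W_2$ with $W_1,W_2$ nonzero $G$-submodules and $K_i=C_G(W_i)$, then $K_1\cap K_2=C_G(V)=1$, each $G/K_i$ is supersoluble and acts faithfully and completely reducibly on the smaller module $W_i$, and by induction there are $a_i,b_i\in W_i$ with $C_G(a_i)\cap C_G(b_i)=K_i$; since $W_1,W_2$ are complementary submodules, an element fixing $a_1+a_2$ fixes $a_1$ and $a_2$ separately, so $x=a_1+a_2$ and $y=b_1+b_2$ give $C_G(x)\cap C_G(y)=K_1\cap K_2=1$. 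Thus we may assume $V$ is irreducible and faithful (the case $G=1$ being trivial), and by Clifford's theorem either $V$ is \emph{quasi-primitive} (every normal subgroup of $G$ acts homogeneously on $V$), or there is $N\lhd G$ with $V|_N$ having $t\geq 2$ homogeneous components transitively permuted by $G$; in the latter case, writing $U$ for the stabiliser of one component $W$, the $\mathbb F U$-module $W$ is irreducible and $V\cong\operatorname{Ind}_U^G W$ with $[G:U]=t$.

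In the quasi-primitive case I would use that every abelian normal subgroup of $G$ acts by scalars — a homogeneous module for an abelian group is one-dimensional over its endomorphism field — and hence is cyclic; a maximal abelian normal subgroup $A$ then turns out to be self-centralising (a chief factor of $G$ inside $C_G(A)/A$ would otherwise yield a larger abelian normal subgroup), and since $G$ permutes the field $\mathbb F[A]$ generated by the scalar action of $A$, the quotient $G/A$ embeds in the cyclic Galois group of $\mathbb F[A]$ over $\mathbb F$. So $G$ is metacyclic, and by passing if necessary to the subgroup acting linearly over $\mathbb F[A]$ — a smaller group over a larger field — one reduces after finitely many steps to the case $A=G$, i.e. $G$ abelian, hence cyclic since $V$ is faithful and irreducible; there a vector with nonzero projection to each irreducible summand has trivial stabiliser, so one takes $y=x$. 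This case is routine and not the main difficulty.

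The imprimitive case is the heart of the argument, and it is here that supersolubility is genuinely used. Put $K=C_U(W)$, so $U/K$ is supersoluble, acts faithfully and irreducibly on $W$, and $|U/K|<|G|$ since $t\geq 2$; by induction there are $a,b\in W$ with $C_U(a)\cap C_U(b)=K$. Writing $V=\bigoplus_{i=1}^t g_iW$ for coset representatives $g_1=1,\dots,g_t$ of $U$ in $G$, the plan is to build $x=\sum_i x_i$ and $y=\sum_i y_i$ (with $x_i,y_i\in g_iW$) so that any $\sigma\in C_G(x)\cap C_G(y)$ is forced first to preserve every block $g_iW$ and then to act trivially. The second step is comparatively easy: once $\sigma$ fixes all blocks it lies in $\operatorname{core}_G(U)$, and placing suitable translated copies of the pair $(a,b)$ in the blocks forces $g_i^{-1}\sigma g_i\in C_U(a)\cap C_U(b)=K$ for all $i$, whence $\sigma\in\bigcap_i g_iKg_i^{-1}\leq C_G(V)=1$. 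The hard part is forcing $\sigma$ to preserve every block: this amounts to realising a base for the transitive action of $G$ on the $t$ blocks by means of the components $x_i,y_i$, arranging that the nonzero components — together with their internal ``types'' within the blocks — are pairwise inequivalent under $G$. Supersolubility is what makes this possible, since a primitive supersoluble permutation group has prime degree and base size at most $2$, so descending the lattice of block systems one obtains a base of controllable size, encodable within the corresponding blocks. The main obstacle is the bookkeeping needed to make the block-distinguishing choices and the kernel-killing choices mutually compatible — which is precisely where having the two vectors $x,y$ rather than one provides the necessary room — and once it is carried through we get $C_G(x)\cap C_G(y)=1$, completing the induction.
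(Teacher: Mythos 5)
The paper does not prove this statement at all: it is quoted verbatim as Theorem~A of Wolf \cite{wolf}, so the only benchmark is Wolf's external argument, whose overall architecture (induction, reduction to a faithful irreducible module, the Clifford dichotomy between quasi-primitive and induced modules) your outline does share. The problem is that your treatment of the imprimitive case — which you correctly identify as the heart of the matter — contains both an explicit admission of incompleteness and a false supporting claim. You write that ``descending the lattice of block systems one obtains a base of controllable size'' for the action of $G$ on the $t$ blocks, deducing this from the fact that a primitive supersoluble permutation group has prime degree and base size at most $2$. The premise is true but the conclusion is not: a \emph{transitive} supersoluble (indeed nilpotent) permutation group of degree $t$ can have base size growing without bound — the iterated wreath product $C_2 \wr \cdots \wr C_2$ of degree $2^k$ has order $2^{2^k-1}$ and hence needs a base of size at least $(2^k-1)/k$. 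So you cannot distinguish the blocks by marking a bounded number of them. The standard repair, which you miss, is to use supersolubility \emph{before} setting up the induced module: replace $U$ by a maximal subgroup $M \supseteq U$, which has prime index $p$ in $G$, and replace $W$ by the irreducible summand $W^M$ of $V|_M$, so that $t=p$ and the block group is a subgroup of ${\rm AGL}(1,p)$. Even after that reduction, making the block-distinguishing data and the kernel-killing data coexist inside the two components $(x_i,y_i)\in g_iW\times g_iW$ per block is precisely the content of the theorem (note that the analogous statement with two vectors is false for general soluble groups, where Seress's bound \cite{ser} gives three), and your ``once it is carried through'' defers exactly this. As it stands the proposal is a plan, not a proof.

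Two smaller points. In the quasi-primitive case, the sentence ``by passing to the subgroup acting linearly over $\mathbb{F}[A]$ \dots\ one reduces after finitely many steps to the case $A=G$'' is not a valid reduction: since $A=C_G(A)$, the subgroup of $G$ acting $\mathbb{F}[A]$-linearly is $A$ itself, and a regular orbit for a subgroup says nothing about the stabilisers of elements of $G\setminus A$. What one actually does is note that $A$ acts by nonzero scalars, so $C_G(v)\cap A=1$ and $C_G(v)$ embeds in the cyclic group $G/A\hookrightarrow{\rm Gal}(\mathbb{F}[A]/\mathbb{F})$ for any $v\neq 0$, and then one finds a second vector avoiding the fixed spaces of the finitely many nontrivial elements of $C_G(v)$ by a counting argument. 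That argument, like your induction quantity $\dim_{\mathbb F}V$ and the whole scheme, presupposes that $V$ is finite — which is the setting of Wolf's theorem and of its use in this paper, but should be said.
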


For abelian groups, this can be strengthened as follows.

\begin{lem}\label{abel}
Let $G$ be a finite abelian group and let $V$ be a faithful completely reducible $G$-module. Then there exists $x\in V$ such that $C_G(x)=1.$	
\end{lem}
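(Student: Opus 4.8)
The plan is to reduce to the case where $V$ is irreducible by exploiting the fact that a finite abelian group acting faithfully and completely reducibly can be detected "one irreducible constituent at a time", but with a twist: since we want a single vector $x$ with $C_G(x)=1$, I cannot simply take one good vector in each constituent and add them — I need to control how the pointwise stabilisers interact across constituents. The cleanest route is to decompose $V = V_1 \oplus \cdots \oplus V_k$ into irreducible $G$-submodules, let $K_i = C_G(V_i)$ be the kernel of the action on $V_i$, and observe that faithfulness means $\bigcap_i K_i = 1$. For each $i$, the group $\bar G_i = G/K_i$ acts faithfully and irreducibly on $V_i$; since $\bar G_i$ is abelian and acts faithfully and irreducibly over a finite field, it is cyclic (it embeds in the multiplicative group of the relevant extension field via Schur's lemma), and hence $\bar G_i$ has a regular orbit on $V_i$ — indeed any nonzero vector has trivial stabiliser in $\bar G_i$, because a nontrivial cyclic group has no nonzero fixed points on a faithful irreducible module. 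So for any $0 \ne v_i \in V_i$ we get $C_G(v_i) = K_i$.

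The remaining point is to choose the $v_i$ so that $C_G(v_1 + \cdots + v_k) = \bigcap_i C_G(v_i) = \bigcap_i K_i = 1$. But $C_G(v_1+\cdots+v_k) = \bigcap_i C_G(v_i)$ holds automatically for the direct sum, since an element fixing $v_1+\cdots+v_k$ must fix each component $v_i$ (the $V_i$ are $G$-invariant and the sum is direct). Therefore \emph{any} choice of nonzero $v_i \in V_i$ works, and $x = v_1 + \cdots + v_k$ satisfies $C_G(x) = \bigcap_i K_i = 1$. This completes the argument. Note we should be slightly careful that $V$ may be a module over a field of characteristic $p$ dividing $|G|$ or not — but complete reducibility is assumed outright, so Clifford/Schur-type reasoning applies directly; the only structural input needed is that a faithful irreducible module for a finite abelian group forces that group to be cyclic and fixed-point-free on nonzero vectors.

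The main obstacle — really the only subtlety — is making sure the irreducible decomposition and the identification of $\bar G_i$ as cyclic and fixed-point-free are stated correctly when $V$ is a module over an arbitrary (possibly non-prime, possibly "mixed characteristic" in the sense of $G$ being a direct product of $p$-parts) ring of scalars. In the intended application $V$ will be a completely reducible module over a field, so Schur's lemma gives $\End_G(V_i)$ a finite division ring, hence a field, and $\bar G_i$ embeds in its multiplicative group and is thus cyclic; a nontrivial element of this group fixes no nonzero vector of $V_i$. If one wants to allow $V$ to be a direct sum of modules over different prime fields, the same argument runs componentwise without change. So the proof is essentially a two-line reduction once this structural fact is in hand, and I would present it in that compressed form.
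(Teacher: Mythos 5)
Your proof is correct and follows essentially the same route as the paper: decompose $V$ into irreducible constituents $V_i$, use Schur's lemma and commutativity to see that $G/C_G(V_i)$ embeds in $F_i^{\times}$ with $\dim_{F_i}V_i=1$, deduce $C_G(v_i)=C_G(V_i)$ for every nonzero $v_i\in V_i$, and take $x=v_1+\cdots+v_k$. No issues.
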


\begin{proof}
First decompose $V=V_1 \oplus \cdots \oplus V_n$ as a direct sum of irreducible $G$-modules and set $F_i=\End_GV_i.$ Since $G$ is abelian, we have $\dim_{F_i}V_i=1$ and $G/C_G(V_i)\leqs F_i^{\times}.$ In particular, $C_G(V_i)=C_G(v_i)$ for every $0\neq v_i\in V_i.$ So if we take $v=(v_1,\dots,v_n)\in V$ with $v_i\neq 0$, then   
\[
C_G(v)=\bigcap_{i=1}^{n}C_G(v_i)=\bigcap_{i=1}^{n}C_G(V_i)=C_G(V)=1
\]
and the result follows.
\end{proof}

We are now in a position to establish the first bound in Theorem \ref{t:main3}. Recall that if $G$ is a finite group, then $\lambda(G)$ denotes the chief length of $G$ (that is, the number of factors in a chief series for $G$).

\begin{thm}\label{t:thm3_1}
If $G$ is a finite soluble group, then $\alpha(G)\leqs \lambda(G).$
\end{thm}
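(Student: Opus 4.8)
The plan is to induct on the chief length $\lambda(G)$. If $G = \frat(G)$, i.e. $G$ is nilpotent and trivial modulo its Frattini subgroup, then $\alpha(G) = 0 = \lambda(G/\frat(G))$ (using the convention that the empty intersection of maximal subgroups is $G$ itself, which equals $\frat(G)$ in this degenerate case), so there is nothing to prove; more generally we may pass to $G/\frat(G)$ at the outset and assume $\frat(G) = 1$, since $\lambda(G/\frat(G)) \leqs \lambda(G)$ and $\alpha(G) = \alpha(G/\frat(G))$. So assume $\frat(G) = 1$, let $N$ be a minimal normal subgroup of $G$, and note that since $G$ is soluble, $N$ is an elementary abelian $p$-group for some prime $p$, i.e. a faithful irreducible module for $G/C_G(N)$. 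Because $\frat(G) = 1$, $N$ is not contained in any proper intersection of maximal subgroups that it would need to be, and in fact $N$ has a complement: there is a maximal subgroup $M$ with $G = NM$ and $N \cap M = C_N(M) $; more precisely, since $N \not\leqs \frat(G)$, some maximal subgroup $M$ does not contain $N$, so $G = NM$ and $M_G = \core_G(M)$ does not contain $N$.

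The key step is the following reduction. By induction applied to $\bar G = G/N$ (which has chief length $\lambda(G) - 1$), there exist maximal subgroups $\bar H_1, \ldots, \bar H_k$ of $\bar G$ with $k = \alpha(\bar G) \leqs \lambda(G) - 1$ and $\bigcap_i \bar H_i = \frat(\bar G)$. Pulling these back to $G$ gives maximal subgroups $H_1, \ldots, H_k$ (each containing $N$) with $\bigcap_i H_i = $ the preimage of $\frat(G/N)$, call it $R$; note $N \leqs R$ and $R/N = \frat(G/N)$. Now I want to intersect $R$ with one further maximal subgroup (or a controlled number of them) so as to cut down to $\frat(G) = 1$. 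The obstacle is that $R$ need not equal $N$: the quotient $R/N = \frat(G/N)$ can be nontrivial, so a single well-chosen complement to $N$ will not automatically finish the job. This is precisely where solubility (as opposed to just having an abelian minimal normal subgroup) must be used, and where the hypothesis $\alpha(G) \leqs \lambda(G)$ — rather than the sharper $\delta(G)$ — gives us the slack of "one maximal subgroup per chief factor."

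To handle this cleanly, I would instead set up the induction so that at each stage we peel off one chief factor using exactly one maximal subgroup. Concretely: pick a maximal subgroup $M$ with $G = NM$, so $M$ is a complement to $N$ and $M \cong G/N$ as abstract groups, with $\lambda(M) = \lambda(G) - 1$; moreover $\frat(M) \leqs \frat(G) \cdot \text{(something)}$... the cleaner route is: observe $M_G = \bigcap_{g} M^g$, and since $\frat(G) = 1$ while $N \cap M_G \trianglelefteqslant G$ with $N$ minimal normal and $N \not\leqs M_G$, we get $N \cap M_G = 1$, hence $M_G \cap N = 1$ and $M_G \cong$ a subgroup of $G/N$. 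Then apply induction to $M$: there are maximal subgroups $K_1, \ldots, K_m$ of $M$ with $m \leqs \lambda(M) = \lambda(G)-1$ and $\bigcap K_i = \frat(M)$. Each $K_i$ is contained in a maximal subgroup $\what K_i$ of $G$ (either $K_i$ is already maximal in $G$, or $K_i = M \cap \what K_i$ for a maximal $\what K_i \supseteq K_i$). One checks, using $G = NM$ and Dedekind's law, that $M \cap \bigcap_i \what K_i = \bigcap_i K_i = \frat(M)$. Finally, since $\frat(M)$ is nilpotent and $\frat(G) = 1$ forces $\frat(M) \cap N = 1$ and $\frat(M) \leqs C_G(N)$... this last nilpotency/centralizer bookkeeping — showing that adding $M$ itself to the list kills the residual $\frat(M)$, i.e. that $\frat(M) \cap M = \frat(M)$ meets $N$-coset structure correctly so that $N \cap M \cap \bigcap_i \what K_i = 1$ — is the genuinely delicate point, and it is where I expect to invoke Lemma \ref{abel} or a variant: $N$ is a faithful completely reducible module for the relevant section and we can choose $M$ (equivalently choose the complement and the extra generator inside $N$) so that the centralizer collapses. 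Thus $\{M, \what K_1, \ldots, \what K_m\}$ is a family of at most $1 + (\lambda(G)-1) = \lambda(G)$ maximal subgroups of $G$ with trivial intersection, giving $\alpha(G) \leqs \lambda(G)$.
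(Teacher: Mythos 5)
There is a genuine gap, and it sits exactly where you flag the "genuinely delicate point": the family $\{M, \what K_1, \ldots, \what K_m\}$ you end with does \emph{not} have trivial intersection. By construction $M \cap \bigcap_i \what K_i = \bigcap_i K_i = \frat(M)$, and $\frat(M)$ can be nontrivial even though $\frat(G)=1$. A minimal example is the Frobenius group $G = C_5 \rtimes C_4$: here $\frat(G)=1$, $N = C_5$, $M = C_4$ is a complement, $\frat(M) = C_2$, and your family is $\{C_4,\, C_5\rtimes C_2\}$ with intersection $C_2 \neq 1$. Adding $M$ to the list cannot kill $\frat(M)$, since $\frat(M) \leqs M$; what is needed is \emph{additional} maximal subgroups of the form $M^{v}$ with $v \in N$ (point stabilisers for the action of $M$ on $N$), and the whole difficulty of the theorem is showing that the number of such extra subgroups can be charged against the chief factors of $\frat(M)$ inside $G$ so that the total stays below $\lambda(G)$. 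Your proposal gestures at Lemma \ref{abel} for this but supplies no argument, and Lemma \ref{abel} alone does not suffice: the residual group $\frat(M)$ is nilpotent but generally non-abelian, and it need not act faithfully on the single minimal normal subgroup $N$ you have chosen, so neither Lemma \ref{abel} nor Wolf's theorem applies to $N$ directly in your one-factor-at-a-time setup.

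For comparison, the paper's proof works with the full Fitting decomposition $F = \prod_i V_i^{d_i}$ and $G = F \rtimes H$ at once. After disposing of central factors and multiplicities, it applies induction to $H$ to obtain $\lambda(H/\frat(H))$ maximal subgroups cutting $H$ down to $L = \frat(H)$, and then kills $L$ by a descending chain $L = L_1 > L_2 > \cdots > L_{k+1}=1$ with $L_{j+1} = C_{L_j}(V_{i_j})$, using Theorem \ref{wo} (Wolf) to find at most two vectors in $V_{i_j}$ whose common centraliser in $L_j$ is $L_{j+1}$, and Lemma \ref{abel} to get away with one vector when $L_j/L_{j+1}$ is abelian. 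The resulting count $\ell + r + \sum_j \eta_j \leqs \lambda(H) + r = \lambda(G)$ is exactly the bookkeeping your proposal omits: each non-abelian step costs two maximal subgroups but consumes at least two chief factors of the nilpotent group $L$, and each abelian step costs one. Without this mechanism (or an equivalent one), the induction does not close.
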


\begin{proof}
We prove the theorem by induction on $\lambda(G).$ Without loss of generality, we may assume that $\frat(G)=1.$ Let $F$ be the Fitting subgroup of $G$, so 
\[
F=\prod_{i=1}^{n} V_i^{d_i}
\]
and $G = F \rtimes H$, where the $V_i$ are pairwise non-isomorphic irreducible  $H$-modules and the $d_i$ are positive integers. In particular, 
\[
\lambda(G)=\lambda(H)+\sum_{i=1}^{n}d_i.
\] 
We may assume that the indices $i$ are ordered in such a way that $V_i$ is central if and only if $i > r.$ Since $C_G(F)=F,$ it follows that 
\[
Z(G)=\prod_{i=r+1}^{n}V_i^{d_i}.
\]
	
There exist $s=\sum_{i>r}d_i$ maximal subgroups $M_1,\ldots, M_s$ of $G$ such that
\[
\bigcap_{j=1}^{s}M_j = \left(\prod_{i=1}^{r}V_i^{d_i}\right)\rtimes H\cong G/Z(G),
\] 
so it is not restrictive to assume $Z(G)=1$ and $r=n.$ Similarly, there exist $t=\sum_i (d_i-1)$ maximal subgroups $K_1,\ldots, K_t$ of $G$ with
\[
\bigcap_{j=1}^{t}K_j = \left(\prod_{i=1}^{n} V_i\right)\rtimes H.
\]	
Therefore, we may also assume that $d_i=1$ for $i=1, \ldots, n$, which means we have reduced the problem to the case where
\begin{equation}\label{e:eq}
G = \left(\prod_{i=1}^{n} V_i\right)\rtimes H \text{ and } C_F(H)=1.
\end{equation}
	
Let $L=\frat(H)$ and set $\ell=\lambda(H/L).$ By induction, there exist $\ell$ maximal subgroups $X_1,\dots,X_{\ell}$ of $H$ such that
	$L=X_1\cap\dots\cap X_{\ell}.$ But then $Y_1=FX_1,\dots,Y_\ell=FX_{\ell}$ are maximal subgroups of $G$ with 
	\[
	FL=Y_1\cap\dots\cap Y_{\ell}.
	\]
		
	Assume $L\neq 1$ and set $L_1=L.$
	Since $C_H(F)=1,$ there exists $i_1\in \{1,\dots,n\}$ such that $L_2=C_{L_1}(V_{i_1})<L_1.$ 
	Notice that $L_1C_H(V_{i_1})/C_H(V_{i_1})\cong L_1/L_2$ is a normal subgroup of $H/C_H(V_{i_1}),$ which acts irreducibly on $V_{i_1},$ so Clifford Theory implies that $V_{i_1}$ is a faithful completely reducible $(L_1C_H(V_{i_1})/C_H(V_{i_1}))$-module. If $L_1/L_2$ is abelian, then by Lemma \ref{abel} there exists $v_{1,1}$ in $V_{i_1}$ such that $C_{L_1}(v_{1,1})=L_2.$ In any case, $L_1$ is nilpotent and thus  Theorem \ref{wo} implies that there exist two elements $v_{1,1}$ and $v_{1,2}$ in $V_{i_1}$ such that $L_2=C_{L_1}(v_{1,1})\cap C_{L_1}(v_{1,2}).$ Now set $\eta_1=1$ if $L_1/L_2$ is abelian, $\eta_1=2$ otherwise. Then 
		$$L_{1,0}=\left(\prod_{i\neq i_1} V_i\right)\rtimes H \; \mbox{ and } \; 
	L_{1,j}=\left(\prod_{i\neq i_1} V_i\right)\rtimes H^{v_{1,j}} \mbox{ for $1\leqs j \leqs \eta_1$}
	$$
	are maximal subgroups of $G$ and we observe that 
	$$\left(\bigcap_{i=1}^{\ell} Y_i\right)\cap \left(\bigcap_{i=0}^{\eta_1}L_{1,i}\right)=\left(\prod_{i\neq i_1} V_i\right)\rtimes L_2.
	$$
	
	If $L_2\neq 1,$ then there exists $i_2\neq i_1$ such that
	$L_3=C_{L_2}(V_{i_2})<L_2.$   Define $\eta_2=1$ if $L_2/L_3$ is abelian, $\eta_2=2$ otherwise. As before, we can find $v_{2,1}, v_{2,\eta_2}$ in $V_{i_2}$ such that $L_3=C_{L_2}(v_{2,1})\cap C_{L_2}(v_{2,\eta_2}).$ Then 
	$$L_{2,0}=\left(\prod_{i\neq i_2} V_i\right)\rtimes H \; \mbox{ and } \; 
	L_{2,j}=\left(\prod_{i\neq i_2} V_i\right)\rtimes H^{v_{2,j}} \mbox{ for $1\leqs j \leqs \eta_2$}
	$$
	are maximal subgroups of $G$ and
	$$\left(\bigcap_{i=1}^{\ell}Y_i\right)\cap \left(\bigcap_{i=0}^{\eta_1}L_{1,i}\right)\cap \left(\bigcap_{i=0}^{\eta_2}L_{2,i}\right)
	=\left(\prod_{i \ne i_1,i_2} V_i\right)\rtimes L_3.
	$$
	
	We repeat this procedure until we have $L_{k+1}=C_{L_k}(V_{i_k})=1.$ In this way, we construct a subset $J=\{i_1,\dots,i_k\}$ of $\{1,\dots,n\}$ and $k+\sum_{1\leqs i\leqs k}\eta_i$ maximal subgroups
	$$L_{1,0}, \, L_{1,1}, \, L_{1,\eta_1},\dots,L_{k,0}, \, L_{k,1}, \, L_{k,\eta_k}$$ of $G$ such that
	$$	\left(\bigcap_{i=1}^{\ell}Y_i\right)\cap \left(\bigcap_{i=0}^{\eta_1}L_{1,i}\right)\cap\dots\cap \left(\bigcap_{i=0}^{\eta_k}L_{k,i}\right)=\prod_{j\notin J}V_j.
	$$
	Finally, for each $j \in \{1, \ldots, n\} \setminus J$, let $R_j=\left(\prod_{i\neq j} V_i\right)\rtimes H.$ Then $R_j$ is a maximal subgroup of $G$ and we have
	$$\left(\bigcap_{i=1}^{\ell}Y_i\right)\cap \left(\bigcap_{i=0}^{\eta_1}L_{1,i}\right)\cap\dots\cap \left(\bigcap_{i=0}^{\eta_k}L_{k,i}\right)\cap 
	\left(\bigcap_{j\notin J}R_j\right)=1.
	$$
	Therefore, $\alpha(G)\leqs \ell+n+\sum_{1\leqs i \leqs k}\eta_i\leqs \lambda(G).$
\end{proof}

Let $G$ be a finite group and let $A=H/K$ be a chief factor of $G$. Recall that a subgroup $L \leqs G$ is a complement to $A$ in $G$ if $G = LH$ and $L \cap H=K$. We say that $A$ is a \emph{Frattini chief factor} if it is contained in the Frattini subgroup of $G/K$; equivalently, $A$ is abelian and there is no complement to $A$ in $G$. Let $\delta(G)$ denote the number of non-Frattini factors in a chief series for $G.$ 

The next result completes the proof of Theorem \ref{t:main3}.

\begin{thm}\label{t:thm3_2}
Let $G$ be a finite soluble group and assume the derived subgroup of $G$ is nilpotent. Then $\alpha(G) \leqs \delta(G).$
\end{thm}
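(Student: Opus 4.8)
The plan is to adapt the proof of Theorem \ref{t:thm3_1}, exploiting the extra hypothesis that $G'$ is nilpotent in order to replace the crude bound $\lambda(G)$ by $\delta(G)$. As before, we may assume $\frat(G)=1$, so $F=\fit(G)=\prod_{i=1}^n V_i^{d_i}$ is a direct product of minimal normal subgroups of $G$, and $G=F\rtimes H$ with $C_G(F)=F$. Since $G'$ is nilpotent, $G'\leqs F$, and hence $H=G/F$ is abelian; in particular every chief factor of $G$ \emph{above} $F$ is central of prime order, and since $\frat(G)=1$ these are non-Frattini, so they are accounted for by exactly $\lambda(H)$ maximal subgroups of $G$ (the preimages in $G$ of a minimal set of maximal subgroups of $H$ cutting down to $\frat(H)=1$, which since $H$ is abelian means $\lambda(H)$ subgroups intersecting trivially). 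Each such factor contributes $1$ to $\delta(G)$ as well.

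\medskip

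Next I would analyse the chief factors inside $F$. Write $Z=Z(G)=\prod_{i>r}V_i^{d_i}$ and note that the central $V_i$ with $i>r$ are Frattini chief factors precisely when they cannot be complemented; but since $\frat(G)=1$ each $V_i^{d_i}$ \emph{is} complemented in $G$, so actually all the $V_i$ are non-Frattini and $\delta(G)=\lambda(H)+\sum_i d_i=\lambda(G)$ — which means the gain over Theorem \ref{t:thm3_1} must come from a more careful count, namely using Lemma \ref{abel} rather than Theorem \ref{wo} at every step. The key point is that, because $H$ is abelian, each $H/C_H(V_i)$ embeds in $\End_H(V_i)^\times$ and acts with a regular-type orbit on $V_i\setminus\{0\}$; more precisely, for the descending chain $L_1=H\supsetneq L_2\supsetneq\cdots$ obtained by successively taking $L_{j+1}=C_{L_j}(V_{i_j})$, every quotient $L_j/L_{j+1}$ is abelian, so Lemma \ref{abel} (applied to the faithful completely reducible $L_j/C_{L_j}(V_{i_j})$-module $V_{i_j}$, via Clifford theory) yields a \emph{single} vector $v_j\in V_{i_j}$ with $C_{L_j}(v_j)=L_{j+1}$. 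Thus each $\eta_j=1$ in the notation of the previous proof, and the conjugate subgroup $\big(\prod_{i\ne i_j}V_i\big)\rtimes H^{v_j}$ together with the "$R$-type" subgroup $\big(\prod_{i\ne i_j}V_i\big)\rtimes H$ peels off the factor $V_{i_j}$ using just $2$ maximal subgroups — one of which, the $R$-type one, is shared with the final clean-up step. Running this over all $i\in\{1,\dots,r\}$ and adding the central $V_i$ (each handled by one $R$-type subgroup) gives a base of maximal subgroups of size $\lambda(H)+r+(\text{number of }\eta_j)\leqs\lambda(H)+\sum_i d_i$ when $d_i=1$, and the multiplicity reductions at the start (replacing $V_i^{d_i}$ by $V_i$, and killing the central part) cost exactly one maximal subgroup per extra copy, matching $\delta(G)$.

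\medskip

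Concretely the steps are: (1) reduce to $\frat(G)=1$ and to the case $G=\big(\prod_{i=1}^r V_i\big)\rtimes H$ with $C_F(H)=1$, exactly as in Theorem \ref{t:thm3_1}, noting the reductions are length-preserving with respect to $\delta$; (2) observe $G'\leqs F$ nilpotent forces $H$ abelian; (3) handle the factors above $F$ with $\lambda(H)$ maximal subgroups $Y_i=FX_i$; (4) iterate $L_{j+1}=C_{L_j}(V_{i_j})$, using Clifford theory plus Lemma \ref{abel} to get one vector $v_j$ per step and hence one conjugate maximal subgroup $\big(\prod_{i\ne i_j}V_i\big)\rtimes H^{v_j}$ per step; (5) finish with the subgroups $R_j=\big(\prod_{i\ne j}V_i\big)\rtimes H$ for $j\notin J$ and reuse the $R$-type subgroups for the indices in $J$, so that the total is $\lambda(H)+r=\delta(G)$ after the initial multiplicity reductions. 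The main obstacle I anticipate is bookkeeping: making sure no maximal subgroup is double-counted (in particular that the $R$-type subgroup used to cut down $V_{i_j}$ in the peeling step is the \emph{same} subgroup one would otherwise need in the final clean-up), and checking that the initial reductions from $V_i^{d_i}$ to $V_i$ and the removal of the central part each cost exactly the number of chief factors they eliminate — so that the running total never exceeds $\delta(G)$. Once the accounting is set up so that exactly one maximal subgroup is charged to each non-Frattini chief factor, the bound $\alpha(G)\leqs\delta(G)$ follows.
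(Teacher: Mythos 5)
Your proposal contains a genuine gap, and it originates in the claim that $\frat(G)=1$ forces every chief factor of $G$ above $F$ to be non-Frattini, so that $\delta(G)=\lambda(G)$. This is false. Take $G=V\rtimes\langle h\rangle$ with $|V|=5$ and $h$ of order $4$ acting faithfully: then $G'=V$ is nilpotent and $\frat(G)=1$, but the chief factor $V\langle h^2\rangle/V$ lies in $\frat(G/V)$, so $\delta(G)=2<3=\lambda(G)$. The correct identity is $\delta(G)=\delta(H)+\sum_i d_i$, and for abelian $H$ one has $\delta(H)<\lambda(H)$ whenever $H$ does not have squarefree exponent. Because of this, your accounting does not close: you spend $\lambda(H)$ maximal subgroups $Y_i=FX_i$ on the part above $F$ and then at least one further subgroup per peeling step, so even with every $\eta_j=1$ you only recover $\alpha(G)\leqs\lambda(G)$ — no improvement on Theorem \ref{t:thm3_1}. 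In the example above your construction uses $3$ maximal subgroups, while $\delta(G)=2$.

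The paper's argument differs in two essential ways that your plan misses. First, it uses \emph{no} maximal subgroups for the chief factors above $F$: since $C_H(F)=1$, all of $H$ is trivialised by intersecting $H$ with conjugates $H^{v_j}$ for suitable $v_j\in V_{i_j}$ (one vector per step suffices, as you correctly note, because $H$ abelian gives $\dim_{\End_H(V_{i_j})}V_{i_j}=1$ and hence $C_H(v_j)=C_H(V_{i_j})$). Second — and this is the key idea — the indices $i_j$ are not arbitrary: setting $T_j=\soc(H_j)$, one chooses $i_j$ with $C_{T_j}(V_{i_j})<T_j$, producing a strictly decreasing chain $T_1>T_2>\cdots$ inside $\soc(H)$. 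Since $|\soc(H)|$ is a product of exactly $\delta(H)$ primes, the number of peeling steps is $k\leqs\delta(H)$, and the total $r+k\leqs r+\delta(H)=\delta(G)$. Without this socle-guided choice the chain $H=L_1>L_2>\cdots$ obtained by taking successive centralisers can have up to $\lambda(H)$ steps (in the example, centralising the module on which $h$ acts with order $2$ first gives a chain of length $2$ instead of $1$), and the bound degrades back to $\lambda(G)$.
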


\begin{proof}
	We may assume $\frat(G)=1.$ Let $F$ be the Fitting subgroup of $G$ and write 
	$$F=\prod_{i=1}^{n} V_i^{d_i}$$ 
	so that $G = F \rtimes H$ and $H$ is abelian. Here the $V_i$ are pairwise non-isomorphic irreducible $H$-modules and we have
	\[
	\delta(G)=\delta(H)+\sum_{i=1}^{n}d_i.
	\]
As in the proof of the previous theorem, we may assume that the indices $i$ are ordered so that $V_i$ is central if and only if $i > r.$ Since $C_G(F)=F,$ it follows that 
\[
Z(G)=\prod_{i=r+1}^{n} V_i^{d_i}.
\] 
Then by arguing as in the proof of the previous result, we may assume without loss of generality that $Z(G) = 1$, $r=n$ and \eqref{e:eq} holds.

	Let $H_1=H$ and $T_1=\soc(H_1).$ Since $C_H(F)=1,$ there exists $i_1\in \{1,\dots,n\}$ such that $C_{T_1}(V_{i_1})<T_1.$
	By Lemma \ref{abel}, there exists $v_1 \in V_{i_1}$ such that $C_H(v_1)=C_H(V_{i_1}).$
	Then 
		$$L_{1,1}=\left(\prod_{i\neq i_1} V_i\right)\rtimes H \; \mbox{ and } \;  L_{1,2}=\left(\prod_{i\neq i_1} V_i\right)\rtimes H^{v_1}
	$$
	are maximal subgroups of $G$. In addition, for $H_2=C_H(V_{i_1})$ we have 
	$$L_{1,1}\cap L_{1,2}=\left(\prod_{i\neq i_1} V_i\right)\rtimes H_2.
	$$
	
	Now let $T_2=\soc(H_2)$, so $T_2<T_1.$
	If $T_2\neq 1,$ then there exists $i_2\neq i_1$ such that
	$C_{T_2}(V_{i_2})<T_2.$ As before, there exists $v_2$ in $V_{i_2}$ such that $C_H(v_2)=C_H(V_{i_2})$ and we may consider the maximal subgroups
	$$L_{2,1}=\left(\prod_{i\neq i_2} V_i\right)\rtimes H \; \mbox{ and } \; 
	L_{2,2}=\left(\prod_{i\neq i_2} V_i\right)\rtimes H^{v_2}.
	$$
	If we set $H_3=C_{H_2}(V_{i_2})$, then 
	$$L_{1,1}\cap L_{1,2}\cap L_{2,1}\cap L_{2,2}=\left(\prod_{i \ne i_1,i_2} V_i\right)\rtimes H_3.
	$$
	
	We now repeat this procedure until we have $\soc(H_{k+1})=1.$ This implies that $H_{k+1}=1$, so in this way we construct a subset $J=\{i_1,\dots,i_k\}$ of $\{1,\dots,n\}$ and $2k$ maximal subgroups
	$L_{1,1}$, $L_{1,2},\dots,L_{k,1}$, $L_{k,2}$ of $G$ such that
	$$\left(\bigcap_{i=1}^{k}L_{i,1}\right)\cap \left(\bigcap_{i=1}^{k}L_{i,2}\right)=\prod_{j\notin J}V_j.
	$$
	Finally, for each $j \in \{1, \ldots, n\} \setminus J$, let $R_j=\left(\prod_{i\neq j} V_i\right)\rtimes H.$ Then each $R_j$ is a maximal subgroup of $G$ and 
	$$\left(\bigcap_{i=1}^{k}L_{i,1}\right)\cap \left(\bigcap_{i=1}^{k}L_{i,2}\right)\cap 
	\left(\bigcap_{j\notin J}R_j\right)=1,
	$$
	which implies that $\alpha(G)\leqs n+k.$ Now $|T_1|=|\soc(H)|$ is the product of precisely $\delta(H)$ (not necessarily distinct) prime numbers and thus $k \leqs \delta(H)$ since $T_{i+1}<T_{i}$ for $1\leqs i < k.$ The result follows.  
\end{proof}

\section{A general bound}\label{s:fin}

In this final section we prove Theorem \ref{t:main4}, which provides a general upper bound on the intersection number of an arbitrary finite group. We begin by briefly recalling the theory of crowns, which plays a key role in the proof. 

\subsection{Crowns}\label{ss:crowns}

Given groups $G$ and $A$, we say that $A$ is a \emph{$G$-group} if $G$ acts on $A$ via automorphisms. In addition, $G$ is \emph{irreducible} if it does not stabilise any nontrivial proper subgroups of $A$. Two $G$-groups $A$ and $B$ are \emph{$G$-isomorphic}, denoted by $A \cong_G B$, if there exists a group isomorphism $\varphi : A \to B$ such that $\varphi(g(a)) = g(\varphi(a))$ for all $a \in A$, $g \in G$. 

\begin{defn}\label{d:equiv}
Following \cite{paz}, we say that two irreducible $G$-groups $A$ and $B$  are \emph{$G$-equivalent}, denoted $A \sim_G B$, if there is an isomorphism $\Phi: A\rtimes G \rightarrow B\rtimes G$ such that the following diagram commutes:

\vspace{1mm}

\[
\begin{CD}
1@>>>A@>>>A\rtimes G@>>>G@>>>1\\
@. @VV{\varphi}V @VV{\Phi}V @|\\
1@>>>B@>>>B\rtimes G@>>>G@>>>1
\end{CD}
\]

\vspace{1mm}
\end{defn}

Observe that two $G$\nobreakdash-isomorphic $G$\nobreakdash-groups are $G$\nobreakdash-equivalent, and the converse holds if $A$ and $B$ are abelian. By \cite[Proposition 1.4]{paz}, two chief factors $A$ and $B$ of $G$ are $G$-equivalent if and only if  
\begin{itemize}\addtolength{\itemsep}{0.2\baselineskip}
\item[{\rm (a)}] they are $G$-isomorphic; or 
\item[{\rm (b)}] there exists a maximal subgroup $H$ of $G$ such that $G/H_G$ has two minimal normal subgroups $N_1$ and $N_2$ which are $G$-isomorphic to $A$ and $B$, respectively (here $H_G$ denotes the core of $H$ in $G$).
\end{itemize}

Let $L$ be a monolithic primitive group, so $L$ has a unique minimal
normal subgroup $A$, which is not contained in the Frattini subgroup ${\rm Frat}(L)$. Let $k$ be a positive integer and let $L^k$ be the direct product of $k$ copies of $L$. The \emph{crown-based power of $L$ of size $k$} is the subgroup $L_k$ of $L^k$ defined by
\[
L_k=\{(l_1, \ldots , l_k) \in L^k  \,: \, l_1 \equiv \cdots \equiv l_k \, {\rm mod}\, A \}.
\]
Equivalently, $L_k=A^k{\rm diag}(L^k)$, where ${\rm diag}(L^k) = \{(l, \ldots, l) \, : \, l \in L\} \leqs L^k$. Note that the minimal normal subgroups of $L_k$ are all $L_k$-equivalent.

\begin{rem}\label{r:crown}
In the setting of finite soluble groups, the notion of a crown-based power was first introduced by Gasch\"{u}tz in \cite{Gas}, and it was subsequently extended to all finite groups by Dalla Volta and Lucchini \cite{DVL}. For a more detailed exposition of the theory, we refer the reader to \cite[Section 1.3]{BBE}.
\end{rem}

Let $G$ be a finite group and recall that a chief factor $A=H/K$ of $G$ is \emph{Frattini} if $A \leqs {\rm Frat}(G/K)$. Let $\delta_G(A)$ be the number of non-Frattini chief factors in a chief series for $G$ which are $G$-equivalent to $A$ (this does not depend on the choice of chief series). Let 
$$L_{A}=
\begin{cases}
A\rtimes (G/C_G(A)) & \text{ if $A$ is abelian}, \\
G/C_G(A)& \text{ otherwise}
\end{cases}
$$
be the monolithic primitive group associated to $A$. If $A$ is non-Frattini, then $L_A$ is a homomorphic image of $G$; more precisely, there exists
a normal subgroup $N$ of $G$ such that $G/N \cong L_A$ and $\soc(G/N)\sim_GA$. 

Let $R_G(A)$ be the intersection of all the normal subgroups $N$ of $G$ with the property that $G/N \cong L_A$ and $\soc(G/N)\sim_G A$. Then $G/R_G(A)$ is isomorphic to the crown-based  power $(L_A)_{\delta_G(A)}$. The socle $I_G(A)/R_G(A)$ of $G/R_G(A)$ is called the \emph{$A$-crown} of $G$ and it is a direct product of $\delta_G(A)$ minimal normal subgroups that are $G$-equivalent to $A$. 

\begin{prop}\label{general}
	Let $G$ be a finite group and let $\mathcal B$ be the set of non-Frattini chief factors of $G$ that are $G$-equivalent to some minimal normal subgroup of $G/\frat(G).$ Then
	\[
	\frat(G)=\bigcap_{A\in \mathcal B}R_G(A).
	\]
\end{prop}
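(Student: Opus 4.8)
The plan is to prove the two inclusions separately. For the inclusion $\frat(G) \subseteq \bigcap_{A \in \mathcal B} R_G(A)$, I would first observe that we may pass to $G/\frat(G)$, so it suffices to show that if $\frat(G) = 1$ then $\bigcap_{A \in \mathcal B} R_G(A) = 1$. This reduces the problem to the reverse inclusion, which is the substantive direction. More precisely, recall that each $R_G(A)$ is a normal subgroup of $G$ and that $G/R_G(A) \cong (L_A)_{\delta_G(A)}$ is a crown-based power whose socle $I_G(A)/R_G(A)$ is a direct product of minimal normal subgroups of $G/R_G(A)$, each $G$-equivalent to $A$; since $A$ is non-Frattini, $R_G(A) \ne G$. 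In particular $\bigcap_{A \in \mathcal B} R_G(A)$ is a proper normal subgroup whenever $\mathcal B \ne \emptyset$, and it will be a Frattini-type subgroup as we now explain.

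For the inclusion $\bigcap_{A \in \mathcal B} R_G(A) \subseteq \frat(G)$, set $N = \bigcap_{A \in \mathcal B} R_G(A)$ and suppose, for a contradiction, that $N \not\subseteq \frat(G)$. Then there is a maximal subgroup $M$ of $G$ with $N \not\subseteq M$, so $G = MN$ and $M_G \ne G$. Consider the primitive quotient $\overline G = G/M_G$ with its minimal normal subgroup(s); let $\bar N = NM_G/M_G$, a nontrivial normal subgroup of $\overline G$, and pick a minimal normal subgroup $B$ of $\overline G$ contained in $\bar N$. Since $\overline G$ is a primitive quotient of $G$, pulling back $B$ gives a chief factor of $G$ which is $G$-equivalent to some minimal normal subgroup of $G/\frat(G)$ — this is exactly the content of the characterisation of $G$-equivalence recorded after Definition~\ref{d:equiv} (cases (a) and (b)), using that $\frat(G) \subseteq M_G$. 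Hence $B$ corresponds to some $A \in \mathcal B$. The key point is then that, by the definition of $R_G(A)$ as the intersection of all normal subgroups $K$ with $G/K \cong L_A$ and $\soc(G/K) \sim_G A$, and since $M_G$ is such a subgroup $K$ (because $G/M_G = L_A$ up to the crown-based power structure — more carefully, $G/M_G$ is monolithic with socle $G$-equivalent to $A$, hence $G/M_G \cong L_A$), we get $R_G(A) \subseteq M_G$. Therefore $N \subseteq R_G(A) \subseteq M_G \subseteq M$, contradicting $N \not\subseteq M$. This yields $N \subseteq \frat(G)$.

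The main obstacle is the bookkeeping in the contradiction argument: one must be careful that the minimal normal subgroup $B$ of $G/M_G$ contained in $\bar N$ is genuinely $G$-equivalent (not merely $G$-isomorphic) to a minimal normal subgroup of $G/\frat(G)$, and that $G/M_G$ is monolithic with socle in the $G$-equivalence class of the relevant $A \in \mathcal B$, so that $M_G$ is one of the normal subgroups whose intersection defines $R_G(A)$. Here one uses that $G/M_G$ is primitive, hence has a unique minimal normal subgroup unless it is of the second primitive type, and in the latter case the two minimal normal subgroups are $G$-equivalent by part (b) of the characterisation — so in all cases $M_G \supseteq R_G(A)$ for the appropriate $A$. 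Once this is set up correctly, both inclusions follow, and combining them gives $\frat(G) = \bigcap_{A \in \mathcal B} R_G(A)$, which is the statement of Proposition~\ref{general}.
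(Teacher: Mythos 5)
There is a genuine gap in your argument for the inclusion $\bigcap_{A\in\mathcal B}R_G(A)\subseteq \frat(G)$, at the step where you assert that the minimal normal subgroup $B$ of $\overline G=G/M_G$ contained in $\bar N$ pulls back to a chief factor that is $G$-equivalent to some minimal normal subgroup of $G/\frat(G)$, so that its class lies in $\mathcal B$. The characterisation following Definition~\ref{d:equiv} tells you when two given chief factors are $G$-equivalent; it does not say that every minimal normal subgroup of a primitive quotient of $G$ is $G$-equivalent to a minimal normal subgroup of $G/\frat(G)$, and this is false in general. For example, take $G=S_4$ and $M=D_8$, so $M_G=V_4$ and $G/M_G\cong S_3$: the minimal normal subgroup $A_4/V_4\cong C_3$ of $G/M_G$ is not $G$-equivalent to $V_4$, the unique minimal normal subgroup of $G/\frat(G)=S_4$. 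So $\mathcal B$ can be a proper subset of the set of all non-Frattini chief factors, and nothing in your set-up forces the class of $B$ to lie in $\mathcal B$; without that, you cannot conclude $N\leqslant R_G(A)$ for the relevant $A$, and the contradiction never materialises. (There is also a circularity in your first paragraph: passing to $G/\frat(G)$ already presupposes $\frat(G)\leqslant R_G(A)$, which is the inclusion being proved; this is easily repaired by noting that every normal subgroup $K$ with $G/K\cong L_A$ monolithic primitive is the core of a maximal subgroup of $G$ and hence contains $\frat(G)$.)

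The fix is to choose the chief factor from below rather than from the quotient $G/M_G$. Having reduced to $\frat(G)=1$, suppose $N\neq 1$ and take a minimal normal subgroup $W$ of $G$ with $W\leqslant N$; then $W$ is trivially $G$-equivalent to a minimal normal subgroup of $G/\frat(G)$, so $W\in\mathcal B$ and $W\leqslant N\leqslant R_G(W)$. Now either conclude directly, as the paper does, by quoting \cite[Lemma 10]{crprof}, which gives $W\not\leqslant R_G(W)$ for a non-Frattini chief factor (equivalently, $G/R_G(W)\cong (L_W)_{\delta_G(W)}$ has a minimal normal subgroup $G$-equivalent to $W$, so $W$ cannot sit inside $R_G(W)$); or run your maximal-subgroup argument with a maximal subgroup $M$ not containing $W$ instead of one not containing $N$. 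In the latter case $WM_G/M_G$ is a minimal normal subgroup of the primitive group $G/M_G$ that is $G$-isomorphic to $W$, and your analysis of the two primitive types then correctly yields $R_G(W)\leqslant M_G\leqslant M$ (in the two-socle case because both minimal normal subgroups $N_1/M_G$ and $N_2/M_G$ give normal subgroups $N_1,N_2$ with $G/N_i\cong L_W$ and socle $G$-equivalent to $W$, so $R_G(W)\leqslant N_1\cap N_2=M_G$), contradicting $W\leqslant R_G(W)$ and $W\not\leqslant M$. With this modification your route is sound and is genuinely different from the paper's, which instead bounds $\bigcap_A R_G(A)$ inside $C_G(F^*(G))$ and invokes \cite[Lemma 10]{crprof} directly.
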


\begin{proof}
	Let $A=H/K$ be a non-Frattini chief factor of $G$ and set $R=R_G(A)$. Let $L \geqs K$ be a maximal subgroup of $G$ not containing $H.$ Then the core $L_G$ is a normal subgroup of $G$ centralising $A$ and containing $R$, whence $R \leqs C_G(A).$

	Next let $F^*(G)$ be the generalised Fitting subgroup of $G$. Without loss of generality, we may assume that $\frat(G)=1,$ so $F^*(G)=\soc(G)$ and  
	\[
	M:=\bigcap_{A\in \mathcal B}R_G(A)\leqs \bigcap_{A\in \mathcal B}C_G(A)=C_G(F^*(G))\leqs F^*(G).
	\]
	 Suppose $M\neq 1$ and let $N$ be a minimal normal subgroup of $G$ contained in $M.$ We have $N\sim_G A$ for some $A\in \mathcal B$, so  \cite[Lemma 10]{crprof} implies that $N\not\leqs R_G(A).$ However $N\leqs M\leqs R_G(A)$ by definition, so we have reached a contradiction and thus $M=1$. The result follows.
\end{proof}

\subsection{The intersection number}

We now turn to the proof of Theorem \ref{t:main4}. We need some preliminary lemmas.

Given a finite group $H$, a faithful irreducible $H$-module $A$ and a positive integer $t$, consider the semidirect product $G=A^t\rtimes H.$ Let 
\[
\Der(H,A)=\{\delta: H \to A \,:\,  (h_1h_2)^\delta=(h_1^\delta)^{h_2}h_2^\delta \mbox{ for every $h_1, h_2 \in H$} \}
\]
be the set of derivations from $H$ to $A$. Recall that the map $\delta\mapsto \{hh^\delta \,:\, h \in H\}$ induces a bijection from $\Der(H,A)$ to the set of complements of $A$ in $A\rtimes H.$ For each $\delta \in \Der(H,A)$, define $C_\delta=\{h\in H \,:\, h^\delta=0\}$ and set 
\[
\Lambda(H,A) = \mathcal{M} \cup \{C_\delta \,:\, \delta\in \Der(H,A)\},
\]
where $\mathcal{M}$ is the set of maximal subgroups of $H$. Finally, let $\sigma(H,A)$ be the minimal cardinality of a family of subgroups in $\Lambda(H,A)$ with trivial intersection.
		
		\begin{lem}\label{l:ab0}
Let $G=A^t\rtimes H$ be a finite group as above. Then 
\begin{itemize}\addtolength{\itemsep}{0.2\baselineskip}
\item[{\rm (i)}] $\alpha(G)=t+\sigma(H,A)$; and
\item[{\rm (ii)}] $\alpha(G) \leqs	t + b(H,A),$ where $b(H,A)$ is the base size of $H$ on A. 
\end{itemize}
As a consequence, $\alpha(G)\leqs t + \dim_{\End_H(A)}A$, with $\alpha(G) \leqs  t + 3$ 
if $H$ is soluble. 
\end{lem} 
		
		\begin{proof}
			Set $\alpha=\alpha(G)$ and $\sigma=\sigma(H,A).$ First we prove 
			that $\alpha \leqs t+\sigma.$ We may write $G= (A_1\times \dots\times A_t)\rtimes H,$
			where  $A_i\cong_H A$ for every $i.$ 
			For $i = 1, \ldots, t$, set 
			$$M_i=\left(\prod_{j\neq i} A_j\right)\rtimes H.$$ Since
			$M_1\cap \dots \cap M_{t-1}\cong A\rtimes H,$
			it suffices to show that $\alpha(A\rtimes H)\leqs 1+\sigma.$
			
			By definition of $\s$, there exist $K_j\in \Lambda(H,A)$ such that 
			$K_1\cap \cdots \cap K_{\sigma}=1$. By relabelling, if necessary, we may assume that there exists $r\leqs \sigma$ such that $K_j=C_{\delta_j}$ for some $\delta_j\in \Der(H,A)$ if $j\leqs r,$ while $K_j$ is a maximal subgroup of $H$ if $j>r$. Notice that if $j\leqs r,$  then $Y_j := \{hh^{\delta_j} \,:\, h \in H\}$ is a complement of $A$ in $A\rtimes H$. On the other hand, if $j>r$ then $Y_j :=AK_j$ is a maximal subgroup of $A\rtimes H.$ It follows that 
			\[
			H\cap Y_1 \cap \cdots \cap Y_\sigma=K_1\cap \cdots \cap K_\sigma=1
			\]
		and thus $\alpha(A\rtimes H)\leqs 1+\sigma$ as required.
			
		Next we show that $\a \geqs t+\s$. Let $M_1,\ldots,M_\alpha$ be maximal subgroups of $G$ such that $M_1\cap \cdots \cap M_\alpha=1.$ For $1\leqs j\leqs \alpha,$ let $B_j$ be the normal subgroup $A^t\cap M_j$ of $G$. Note that if $B_j \neq A^t$ then $A^t/B_j \cong_G A$. Therefore, $\alpha\geqs t$ and we may assume that $M_1,\dots,M_t$ are maximal supplements of $A^t$ in $G$, with 
		\[
		M_1\cap \dots \cap M_t\cap A^t = B_1 \cap \dots \cap B_t=1.
		\]
		(Recall that a subgroup $K$ of $G$ is a \emph{supplement} of $A^t$ if $G = A^tK$.)
		 In particular, note that $\{B_1,\ldots,B_t\}$ is \emph{irredundant} in the sense that the intersection of any proper subset is nontrivial. 
		 
		 We claim that $M_1\cap \dots \cap M_t$ is a complement of $A^t$ in $G$. To do this, we will use induction on $i$ to show that $X_i:=M_1\cap \dots \cap M_i$ is a supplement of $A^t$ in $G$ for all $1 \leqs i\leqs t.$ Assume this for some $i<t$ and let $C_i=B_1\cap \dots \cap B_i.$ Since $B_{i+1}$ is a maximal $G$-subgroup of $A^t$ and $\{B_1,\ldots,B_t\}$ is irredundant, it follows that $C_iB_{i+1}=A^t.$ Now let $h\in H$. Since $A^tX_i=G=A^tM_{i+1},$ there exist $a_1, a_2\in A^t$ such that
			$a_1h\in X_i$ and $a_2h\in M_{i+1}.$ Moreover there exist $c\in C_i$ and $b\in B_{i+1}$ such that $a_1a_2^{-1}=bc^{-1}.$ Hence
			$(ca_1)h=(ba_2)h\in X_i\cap M_{i+1}=X_{i+1}$ and we deduce that $G = A^tX_{i+1}.$ This justifies the claim. For the remainder, without any loss of generality, we may assume that $M_1\cap \dots \cap M_t=H.$ 
			
			Now assume $j>t.$ Then either 
			\begin{itemize}\addtolength{\itemsep}{0.2\baselineskip}
\item[{\rm (a)}] $M_j=A^t K_j$ with $K_j$ a maximal subgroup of $H$; or 
\item[{\rm (b)}] $B_j=A^t \cap M_j \cong_H A^{t-1}.$ 
\end{itemize}			
Note that if (a) holds, then $H\cap M_j=K_j.$ On the other hand, if (b) holds then there exists a group $D_j\cong_H A$ and a derivation $\delta_j \in \Der(H,D_j)$ such that $A^t\cong_H B_j\times D_j$ and $M_j=B_j\{hh^{\delta_j} \,:\, h\in H\},$ so $K_j=H\cap M_j=C_{\delta_j}.$ 

We conclude that  
\[
1=H\cap M_{t+1} \cap \dots \cap M_\alpha=K_{t+1}\cap \dots \cap K_\alpha
\]
and therefore $\alpha-t\geqs \sigma$ as required.

Finally, set $b=b(H,A).$ Then there exist $a_1,\ldots,a_b \in A$ such that
$H, H^{a_1},\dots,H^{a_b}$ are maximal subgroups of $A\rtimes H$ and 
\[
H\cap H^{a_1}\cap\dots\cap H^{a_b}=C_H(a_1)\cap \dots \cap C_H(a_b)=1,
\] 
hence $\alpha(A \rtimes H)\leqs b+1.$
 As noted above, 
$\alpha \leqs t-1+\alpha(A \rtimes H),$ thus $\alpha \leqs t+b.$
Clearly $b\leqs \dim_{\End_H(A)}A,$ while \cite[Theorem 1.1]{ser} gives $b\leqs 3$ if $H$ is soluble. The result follows.
		\end{proof}
	
\begin{lem}\label{ab}
	Let $G$ be a finite group and let $A=H/K$ be a non-Frattini abelian chief factor of $G$. Then 
	\[
	\alpha(G/R_G(A))\leqs \delta_G(A)+\dim_{\End_G(A)}A.
	\]
	Moreover, if $G/C_G(A)$ is soluble, then 
	\[
	\alpha(G/R_G(A))\leqs \delta_G(A)+3.
	\]
\end{lem}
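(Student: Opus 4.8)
The plan is to combine the crown machinery recalled in Section~\ref{ss:crowns} with Lemma~\ref{l:ab0}. Set $\bar H = G/C_G(A)$ and $t = \delta_G(A)$. Since $A$ is an abelian non-Frattini chief factor, the monolithic primitive group associated with $A$ is $L_A = A \rtimes \bar H$, with $A$ a faithful irreducible $\bar H$-module, and $G/R_G(A) \cong (L_A)_t$. The first step is to unwind the crown-based power: because $L_A$ splits over $A$, writing each element of $L_A$ uniquely as $ah$ with $a \in A$ and $h \in \bar H$, one checks that $(L_A)_t = A^t\diag(L_A^t) \cong A^t \rtimes \bar H$, where $\bar H$ acts on each of the $t$ copies of $A$ in the same way. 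Thus $G/R_G(A)$ has exactly the shape handled by Lemma~\ref{l:ab0}.

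For the first bound I would apply Lemma~\ref{l:ab0} directly to $G/R_G(A) \cong A^t \rtimes \bar H$, obtaining $\alpha(G/R_G(A)) = t + \sigma(\bar H, A) \leqs t + \dim_{\End_{\bar H}(A)} A$. Since $G$ acts on $A$ through the quotient $\bar H$, a linear endomorphism of $A$ is $G$-equivariant precisely when it is $\bar H$-equivariant, so $\End_G(A) = \End_{\bar H}(A)$; hence $\alpha(G/R_G(A)) \leqs \delta_G(A) + \dim_{\End_G(A)} A$.

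For the refined bound, assume in addition that $\bar H$ is soluble; by the identity above it is enough to show $\sigma(\bar H, A) \leqs 3$. Here I would work with the smaller group $A \rtimes \bar H$: taking $t = 1$ in Lemma~\ref{l:ab0} gives $\alpha(A \rtimes \bar H) = 1 + \sigma(\bar H, A)$. Now $A$ is the unique minimal normal subgroup of $A \rtimes \bar H$ (it is minimal normal because it is $\bar H$-irreducible, and any other minimal normal subgroup would centralise $A$, hence lie in $C_{A \rtimes \bar H}(A) = A$), and $\bar H$ is a complement to $A$, hence a maximal subgroup; it is also core-free, since its normal core centralises $A$ and is therefore contained in $C_{A \rtimes \bar H}(A) = A$, forcing it to be trivial. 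Consequently $\frat(A \rtimes \bar H) = 1$ and the action of $A \rtimes \bar H$ on the cosets of $\bar H$ is a faithful primitive action; as $A$ is abelian and $\bar H$ is soluble, this primitive group is soluble. By the theorem of Seress that every soluble primitive permutation group has a base of size at most $4$, we get $b(A \rtimes \bar H, \bar H) \leqs 4$, and since $\bar H \in \mathcal{M}^*$ it follows that $\alpha(A \rtimes \bar H) \leqs \beta(A \rtimes \bar H) \leqs b(A \rtimes \bar H, \bar H) \leqs 4$. Hence $\sigma(\bar H, A) \leqs 3$, and therefore $\alpha(G/R_G(A)) = t + \sigma(\bar H, A) \leqs \delta_G(A) + 3$.

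The only step needing real care is the first one: verifying that $(L_A)_{\delta_G(A)} \cong A^{\delta_G(A)} \rtimes (G/C_G(A))$ with the diagonal action, so that Lemma~\ref{l:ab0} applies verbatim. Everything after that is routine, the substantive external input being Seress's base-size bound, which does the real work in the soluble case.
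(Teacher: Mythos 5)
Your proof is correct and follows essentially the same route as the paper: identify $G/R_G(A)$ with $A^{\delta_G(A)}\rtimes (G/C_G(A))$, apply Lemma~\ref{l:ab0}, and in the soluble case invoke Seress's theorem that primitive soluble permutation groups have base size at most $4$ to deduce $\sigma(G/C_G(A),A)\leqs 3$. The details you supply (unwinding the crown-based power and checking that $G/C_G(A)$ is a core-free maximal complement) are precisely what the paper's terse proof leaves implicit.
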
	
	
	\begin{proof}
		Let $\delta=\delta_G(A).$ There exists an irreducible subgroup $J \leqs {\rm GL}(A)$ such that 
		\[
		G/R_G(A)\cong (A_1\times \dots\times A_\delta)\rtimes J,
		\]
		where  $A_i\cong_J A$ for every $i.$ The conclusion follows from Lemma \ref{l:ab0}.	\end{proof}

\begin{lem}\label{nonab}
	Let $G$ be a finite group and let $A=H/K$ be a non-abelian chief factor of $G$. Then 
	\[
	\alpha(G/R_G(A))\leqs \max\{4,\delta_G(A)\}+\left\lfloor\frac{3n_A-1}{2}\right\rfloor,
	\]
	 where $n_A$ is the number of composition factors of $A.$
\end{lem}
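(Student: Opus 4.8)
The plan is to pass to the concrete model supplied by crown theory. Write $L=L_A$ for the monolithic primitive group associated with $A$; since $A$ is non-abelian we have $L=G/C_G(A)$, the socle of $L$ is $\soc(L)\cong_G A\cong S^{n}$ for a non-abelian simple group $S$ (with $n=n_A$), and $L$ permutes the $n$ simple direct factors of its socle transitively. Setting $\delta=\delta_G(A)$, crown theory gives $G/R_G(A)\cong L_\delta$, the crown-based power of $L$ of size $\delta$, and $\frat(L_\delta)=1$, since non-abelian minimal normal subgroups are never Frattini and the crown $\soc(L_\delta)=A^\delta$ is a product of minimal normal subgroups each $G$-equivalent to $A$. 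So it suffices to produce $\max\{4,\delta\}+\lfloor(3n-1)/2\rfloor$ maximal subgroups of $L_\delta$ with trivial intersection. The starting point is the structure of the maximal subgroups of $L_\delta$: those containing $\soc(L_\delta)=A^\delta$ correspond to maximal subgroups of $L_\delta/A^\delta\cong L/A$ and contribute nothing, so the useful ones are the maximal subgroups not containing $A^\delta$, and these are of ``diagonal'' or ``product'' type, either relating the $\delta$ crown-copies of $A$, or relating the $n$ simple factors inside a single copy of $A$.

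\emph{Step 1: collapsing the crown.} The subgroups $\{(l_1,\dots,l_\delta)\in L_\delta : l_i=l_1\}$ for $i=2,\dots,\delta$ are maximal in $L_\delta$ (each is a copy of $L_{\delta-1}$, maximality coming from the fact that $A$ is a minimal normal subgroup of $L$), and intersecting them reduces $L_\delta$ to a single copy $L_1=L$; this uses $\delta-1$ maximal subgroups, so naively $\alpha(L_\delta)\leqs (\delta-1)+\alpha(L)$. To obtain the sharper estimate I would instead prove directly that $\alpha(L_\delta)\leqs \max\{4,\delta\}+\theta(L)$, where $\theta(L)$ denotes the least number of maximal subgroups of $L$ \emph{not} containing $\soc(L)$ whose intersection with $\soc(L)$ is trivial; the gain over the naive bound comes from reusing the ``diagonalising'' subgroups to perform part of the work inside $L$ as well, and the uniform constant $4$ (in place of $\delta-1$ when $\delta$ is small, or in place of the local contribution of $\alpha(L)$) is what makes the estimate degrade gracefully for small $\delta$ and for $L$ almost simple, where Theorem \ref{t:main1} may also be invoked.

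\emph{Step 2: collapsing $\soc(L)=S^{n}$.} Here I would bound $\theta(L)\leqs \lfloor(3n-1)/2\rfloor$ by an explicit pairing scheme on the $n$ simple factors. Diagonal-type maximal subgroups of $L$ identify blocks of factors along diagonals, while product-type ones split the factor set; grouping the factors into pairs and alternating the two kinds of maximal subgroup lets one drive $S^n$ down to the identity using roughly $3n/2$ of them, the exact count being $\lfloor(3n-1)/2\rfloor$. The existence of the required maximal subgroups has to be checked using the transitivity of $L$ on the $n$ factors, so that the relevant normalisers are genuinely maximal, and the small cases $n\leqs 2$ are verified directly.

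I expect the main obstacle to be Step 1 together with pinning down the constants: showing that the collapse of the crown and the collapse of the base $L$ can be interleaved so that the two contributions do not simply add — which is what produces the uniform $\max\{4,\delta\}$ rather than $\delta-1+\alpha(L)$ — and then verifying that the pairing scheme of Step 2 is efficient enough to yield exactly $\lfloor(3n-1)/2\rfloor$ rather than a weaker linear bound in $n$. Keeping careful track of which maximal subgroups of $L_\delta$ actually exist, and of how intersections of the ``diagonal'' and ``product'' families behave, is the routine but delicate remaining work.
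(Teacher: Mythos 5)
Your setup is the right one (pass to the crown-based power $L_\delta$, use the diagonal subgroups $M_i=\{(l_1,\dots,l_\delta): l_i=l_1\}$ to collapse the $\delta$ copies to $\diag(L_\delta)\cong L$), but the heart of the argument is missing and the two quantitative claims you reduce to are not correct as stated. First, your Step 2 bound $\theta(L)\leqs \lfloor (3n-1)/2\rfloor$ is false already for $n=1$: a single core-free maximal subgroup $M$ of an almost simple group $L$ with socle $T$ satisfies $|T:M\cap T|\leqs |L:M|<|T|$, so $M\cap T\neq 1$ and $\theta(L)\geqs 2>\lfloor 2/2\rfloor$. More importantly, even granting some bound on $\theta(L)$, your Step 1 inequality $\alpha(L_\delta)\leqs \max\{4,\delta\}+\theta(L)$ is exactly the hard point and is not argued: once the running intersection meets $\soc(L)$ trivially, the residual subgroup embeds in $L/\soc(L)$, whose subgroup-chain length carries a contribution of order $n\cdot \ell(\mathrm{Out}(T))$ in addition to $\ell(S_n)$, and nothing in your accounting pays for this. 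The term $\lfloor(3n-1)/2\rfloor$ in the statement is not the cost of an explicit ``pairing scheme'' that kills $T^n$; it is the Cameron--Solomon--Turull bound on the length of a subgroup chain in $S_n$, used to mop up a \emph{permutation-group} residue after the kernel of the action on the $n$ factors has already been destroyed.

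The mechanism you are missing is the following. Embed $L$ in $H\wr S_n$ with $H$ almost simple with socle $T$, let $B$ be the kernel of the action of $L$ on the $n$ factors, and pick $a,b$ with $T=\langle a,b\rangle$. For $\delta\geqs 2$ one takes, besides the $\delta-1$ subgroups $M_i$, the two \emph{twisted} diagonal maximal subgroups $Y_\alpha=\{(l,l^{\alpha},l_3,\dots,l_\delta)\}$ and $Y_\beta=\{(l,l^{\beta},l_3,\dots,l_\delta)\}$ with $\alpha=(a,\dots,a)$, $\beta=(b,\dots,b)\in T^n$; then $X\cap Y_\alpha\cap Y_\beta\cong C_L(\alpha,\beta)$ meets $B$ trivially (because $C_H(a)\cap C_H(b)=C_H(T)=1$), so the residue is isomorphic to a subgroup $J\leqs S_n$, and since $\frat(L_\delta)=1$ one finishes with $\ell(J)\leqs\lfloor(3n-1)/2\rfloor-1$ further maximal subgroups, for a total of $\delta+\lfloor(3n-1)/2\rfloor$. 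This kills the entire kernel $B$ (not just $\soc(L)$) at a cost of $2$ independent of $n$, but it genuinely requires $\delta\geqs 2$, since $Y_\alpha,Y_\beta$ live across two crown copies. For $\delta=1$ these subgroups are unavailable, and this is where the constant $4$ enters: one applies Theorem \ref{t:main1} to $H$ to produce four product-type maximal subgroups $M_1,\dots,M_4$ of $L$ with $M_1\cap\dots\cap M_4\cap N=1$, and again finishes with the chain-length bound. Your proposal gestures at Theorem \ref{t:main1} but does not isolate the $\delta=1$ case with $n>1$, and it never produces the twisted diagonals that make the $\delta\geqs 2$ count close. As it stands the argument does not go through.
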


\begin{proof}
Let $\delta=\delta_G(A)$ and $L=L_A.$ We have to bound $\alpha(G/R_G(A))=\alpha(L_\delta).$ 

Write $N= \soc(L) \cong T_1 \times \cdots \times T_n\cong A$, where each $T_i$ is isomorphic to a fixed non-abelian simple group $T$. Let $\psi$ be the map from  $N_L(T_1) $ to ${\rm Aut}(T)$ induced by the conjugacy action on $T_1$.
Set $H=\psi(N_L(T_1))$ and note that $H$ is an almost simple group with socle
$T={\rm Inn}(T)=\psi(T_1)$. Let $R=\{r_1,\ldots,r_n\}$ be a right transversal of $N_L(T_1)$ in $L$. Then the homomorphism $\phi_R: L \to H \wr S_n$ given by
$$l \mapsto ( \psi(r_{1}^{} l r_{1 \pi_l}^{-1}), \ldots ,  \psi(r_n^{} l r_{n
	\pi_l}^{-1})) \pi_l$$
	is injective, where $\pi_l \in S_n$ satisfies $r_i^{}l r_{i \pi_l}^{-1} \in
N_L(T_1)$ for all $i$ and all $l \in L$. Therefore, we may identify $L$ with
its image in $H \wr S_n$. Under this identification, $N$ is contained in
the base subgroup $H^n$, while $T_i$ is a subgroup of the $i$-th
component of $H^n$. Note that $J=\{\pi_l \,:\, l\in L\}$ is a transitive subgroup of $S_n.$

First assume $\delta\geqs 2.$  For each $2\leqs i\leqs \delta$, let
$M_i=\{(l_1,\dots,l_{\delta})\in L_\delta \,:\, l_i=l_1\}.$ Then $M_2,\dots,M_{\delta}$ are maximal subgroups of $L_\delta$ with 
\[
X:=M_2\cap \dots \cap M_{\delta}=\diag(L_{\delta})=\{(l,\dots,l) \,:\, l \in L\}\cong L.
\]
Since every non-abelian finite simple group is $2$-generated, we may choose $a, b \in T$ such that $T=\langle a, b\rangle$. Set $\alpha=(a,\dots,a),$ $\beta=(b,\dots, b)$ in $N = T^n$ and consider 
\[
\begin{aligned}
Y_\alpha&=\{(l,l^\alpha,l_3,\dots,l_{\delta}) \,:\, l,l_3,\dots,l_{\delta} \in L\},\\ Y_\beta&=\{(l,l^\beta,l_3,\dots,l_{\delta}) \,:\,  l,l_3,\dots,l_{\delta} \in L\}.
\end{aligned}
\]
Notice that 
\[
X \cap Y_\alpha\cap Y_\beta=\{(y,\dots,y) \,:\, y \in C_L(\alpha,\beta)\}.
\]
Let $B=\{l \in L \,:\, \pi_l=1\}$ and let $(x_1,\dots,x_n)\in C_B(\alpha,\beta).$
Then 
\[
x_i\in C_{{\rm Aut}(T)}(a)\cap C_{{\rm Aut}(T)}(b)=C_{{\rm Aut}(T)}(T)=1
\]
for all $i$, so $X\cap Y_\alpha\cap Y_\beta\cap \diag(B^\delta)=1$ and thus $X\cap Y_{\a} \cap Y_{\b}$ is isomorphic to a subgroup of $J\leqs S_n.$ By the main theorem of \cite{3n2}, we have $\ell(J) \leqs \lfloor(3n-1)/2 \rfloor-1$, where $\ell(J)$ denotes the maximal length of a chain of subgroups in $J$ (with proper inclusions). Since $\frat(L_\delta)=1,$ it follows that $\alpha(L_\delta)\leqs \delta+\lfloor (3n-1)/2 \rfloor.$

To complete the proof, we may assume that $\delta=1$. Let $M$ be a maximal subgroup of $H$ such that $H = MT$, so $M_H=1$. The intersection $L \cap (M \wr J)$ is a maximal subgroup of $L$ (see \cite[Proposition 1.1.44]{BBE}), so by applying Theorem \ref{t:main1} we deduce that there exist maximal subgroups $M_1,M_2, M_3, M_4$ of $G$ of product type with 
\[
M_1\cap M_2 \cap M_3 \cap M_4 \cap H^n = 1.
\]
If we set $Y = M_1\cap M_2 \cap M_3 \cap M_4$, then $\ell(Y)\leqs \lfloor (3n-1)/2 \rfloor-1$ as before and we conclude that $\alpha(L)\leqs 4+\lfloor (3n-1)/2 \rfloor$.
\end{proof}

\begin{proof}[Proof of Theorem \ref{t:main4}]
Let $G$ be a finite group. Let $\mathcal B_{\text{ab}}$ (respectively $\mathcal B_{\text{nonab}}$) be the set of non-Frattini chief factors of $G$ that are $G$-equivalent to some abelian (respectively, non-abelian) minimal normal subgroup of $G/\frat(G).$ Then by Proposition \ref{general},
\[
\alpha(G)\leqs \sum_{A \in \mathcal B_{\text{ab}}\cup \mathcal B_{\text{nonab}}}\alpha(G/R_G(A))
\]
and thus the conclusion follows by combining Lemmas \ref{ab} and \ref{nonab}.
\end{proof}

\end{document}